\documentclass[11pt,a4paper]{article}
\usepackage[T1]{fontenc}
\usepackage{lmodern}
\usepackage{xcolor}
\usepackage[margin=28mm]{geometry}
\usepackage{amsmath,amssymb,amsthm,mathtools}
\usepackage{mathrsfs}
\usepackage{authblk}
\usepackage{booktabs,array,tabularx,microtype}
\usepackage{enumitem,placeins}
\usepackage[colorlinks=true,linkcolor=blue,citecolor=blue,urlcolor=blue]{hyperref}
\allowdisplaybreaks[2]
\hypersetup{pdfauthor={Gabor P. Nagy and Yue Zhou},pdftitle={Semifields in prime dimensions and counterexamples to Kaplansky's conjecture},pdfkeywords={semifields, isotopy, norm determinants, vertex ranks, MRD codes}}
\setlist[enumerate]{label=\textup{(\roman*)},itemsep=3pt,topsep=5pt}
\newtheorem{theorem}{Theorem}[section]
\newtheorem{proposition}[theorem]{Proposition}
\newtheorem{lemma}[theorem]{Lemma}
\newtheorem{corollary}[theorem]{Corollary}
\theoremstyle{definition}
\newtheorem{definition}[theorem]{Definition}
\newtheorem{remark}[theorem]{Remark}
\newcommand{\F}{\mathbb F}
\newcommand{\Ps}{\mathcal P}
\newcommand{\Qs}{\mathcal Q}
\newcommand{\Ss}{\mathcal S}
\newcommand{\Ds}{\mathcal D}

\newcommand{\Fs}{\mathcal F}
\newcommand{\PG}{\operatorname{PG}}
\newcommand{\N}{\operatorname{N}}
\newcommand{\Tr}{\operatorname{Tr}}
\newcommand{\rank}{\operatorname{rank}}
\newcommand{\diag}{\operatorname{diag}}
\newcommand{\id}{\operatorname{id}}

\newcommand{\End}{\operatorname{End}}

\newcommand{\Z}{Z}
\newcommand{\Nl}{N_l}
\newcommand{\Nm}{N_m}
\newcommand{\Nr}{N_r}
\newcommand{\multiset}[1]{\{\!\{#1\}\!\}}
\newcommand{\Bf}{\boldsymbol}
\DeclareMathOperator{\Gal}{Gal}
\DeclareMathOperator{\Aut}{Aut}

\title{Semifields in prime dimensions and counterexamples to Kaplansky's conjecture}

\author[1]{G\'abor P. Nagy}
\author[2]{Yue Zhou}
\affil[1]{Bolyai Institute, University of Szeged, Aradi v\'ertan\'uk tere 1, H-6720 Szeged, Hungary}
\affil[2]{College of Science, National University of Defense Technology,
Changsha 410073, China}
\date{}
\begin{document}
\maketitle
\begin{abstract}
In 1975, Kaplansky conjectured that every five-dimensional division
algebra over a sufficiently large finite field is a field or a twisted
field. We disprove this conjecture. For every prime power
$q=p^e\equiv1\pmod3$ and every $n\ge5$ with $\gcd(n,6)=1$, we construct
semifields of order $q^n$. For fixed $q,n$, the family represents
$\varphi(n)$ isotopy classes if $p\equiv1\pmod3$, and $\varphi(n)/2$
if $p\equiv2\pmod3$, where $\varphi$ is Euler's totient function. 
Using new isotopy invariants and the structural properties of our
construction, we prove that none of these semifields is isotopic to a
finite field or an Albert's generalized twisted field.
In particular, the five-dimensional specialization gives
infinitely many pairwise nonisotopic counterexamples to Kaplansky's
conjecture over arbitrarily large finite fields. More generally,
for each prime dimension $n\ge5$, the examples occur over arbitrarily
large fields in every characteristic other than three, contradicting
the classification asserted by Menichetti in 1996, whose proof contains gaps.
\end{abstract}
\noindent\textbf{Keywords.} Finite semifield; Kaplansky conjecture; isotopism; generalized twisted field.
\par\smallskip
\noindent\textbf{Mathematics Subject Classification (2020).} 17A35, 12K10, 51E15, 94B05.

\section{Introduction}\label{sec:introduction}

A finite semifield is a finite division algebra whose multiplication need not be associative. Its addition is that of a vector space over a finite field, and multiplication is a nonsingular bilinear map over its centre. The subject originates in Dickson's work \cite{Dickson1906commutative} in 1906 on nonassociative finite algebras and has developed through its connections with projective planes, linear groups and finite geometry. Knuth's treatment \cite{Knuth} established isotopy as a central equivalence relation and organized the six operations now associated with the Knuth orbit. A detailed account of these developments and of the construction known around 2010 is given by Lavrauw and Polverino \cite{LP}.

A semifield coordinatizes a translation plane, and isotopic semifields give isomorphic semifield planes. The same multiplication can be studied through its left multiplication maps. A semifield whose multiplication is $K$-bilinear on an $n$-dimensional vector space over $K=\F_q$ yields an $n$-dimensional subspace of $M_n(K)$ in which every nonzero matrix is invertible. In the rank metric, this is a linear maximum rank distance code of minimum distance $n$. Thus constructions of semifields provide extremal rank-metric codes, while equivalence and idealiser methods for matrix codes provide tools for studying isotopy. The underlying rank-metric framework goes back to Delsarte \cite{Delsarte}; see also \cite{SheekeyMRD,SheekeySkew} for more recent constructions and their relation with semifields.

There are many substantially different ways of constructing semifields. Albert's generalized twisted fields \cite{Albert} are obtained by perturbing field multiplication by a second bilinearized monomial. The Hughes--Kleinfeld construction \cite{HK} and the several families introduced by Knuth \cite{Knuth} give further nonassociative multiplications, often presented on a product of two finite fields. Another approach uses skew polynomial rings. Petit's right-remainder construction dates to 1966--1967 \cite{Petit}; over finite fields its relation with cyclic semifields is established in \cite{LSSkew}. Sheekey's construction \cite{SheekeySkew} extends this skew-polynomial approach and includes both cyclic semifields and generalized twisted fields as special cases. Further constructions using skew polynomial quotients were obtained by Lobillo, Santonastaso and Sheekey \cite{LSS}.

Commutative constructions form another large part of the subject. In characteristic two, Knuth's binary presemifields \cite{KnuthBinary} were generalized by Kantor \cite{Kantor03} using a tower of trace maps. Their Knuth orbits also contain the symplectic semifields of Kantor and Williams \cite{KW}. The examples of Ganley \cite{Ganley81} and Cohen--Ganley \cite{CG} are classical instances. In odd characteristic a commutative semifield multiplication gives the planar function $Q(x)=x\circ x$, since
\[
 Q(x+a)-Q(x)=2(a\circ x)+a\circ a
\]
is a permutation for every $a\ne0$. Quadratic planar functions and the related Dembowski--Ostrom polynomials therefore lead to further constructions. These include the Budaghyan--Helleseth \cite{BH08,BH11} and Zha--Kyureghyan--Wang \cite{ZKW} families. Their formulas are closely related to methods used to construct almost perfect nonlinear (APN, for short) functions, although planarity in odd characteristic and the APN property are different conditions.

Bivariate constructions also connect these questions with nonlinear functions. Zhou and Pott \cite{ZP} introduced a two-parameter family of commutative semifields and determined its isotopy classes. Taniguchi's construction \cite{Taniguchi} supplies noncommutative examples related to his quadratic APN functions; G\"olo\u{g}lu and K\"olsch \cite{GKT} subsequently determined when two Taniguchi semifields are isotopic. In a different bivariate construction, G\"olo\u{g}lu and K\"olsch \cite{GK} obtained exponentially many nonisotopic commutative semifields. A further commutative family is obtained from bijections of the Desarguesian plane in \cite{GK2026}. K\"olsch \cite{Kolsch} gives two broader constructions that contain many bivariate families. Such results make clear that the division property, the nuclei, and a complete determination of isotopy are separate aspects of a construction. A nucleus calculation is often an effective first comparison, but families with the same nuclear parameters can require finer invariants.

The prime-dimensional classification problem goes back to Kaplansky's work on three-dimensional division algebras \cite{Kaplansky75,Kaplansky76}. His three-dimensional conjecture was proved by Menichetti \cite{Menichetti77}. Kaplansky also conjectured that a five-dimensional division algebra over $\F_q$ must be a field or a twisted field once $q$ is sufficiently large \cite{Kaplansky75}. This is the conjecture reproduced as (K2) in \cite[p.~69]{Menichetti96}, and in this paper we present a new construction yielding semifields of order $q^5$ with centre of order $q$ for every prime power $q\equiv1\pmod3$, none isotopic to a field or an Albert's generalized twisted field. These examples provide counterexamples to Kaplansky's conjecture over arbitrarily large finite base fields.

In 1996, Menichetti asserted an affirmative answer in the stronger form that, for every prime $r$ and all sufficiently large prime powers $q$, every $r$-dimensional division algebra over $\F_q$ is a field or a generalized twisted field \cite[Corollary~33]{Menichetti96}. His final paragraph explicitly identifies the case $r=5$ with Kaplansky's conjecture. There are gaps in the proof of the higher-dimensional assertion, and our construction gives counterexamples to its conclusion. We discuss the relevant issue in Section~\ref{subsec:menichetti}. The separate three-dimensional classification remains unaffected.

In this paper we construct semifields of order $q^n$ for every prime power $q\equiv1\pmod3$ and every integer $n\ge5$ coprime to $6$. No member of this family is isotopic to a field or an Albert's generalized twisted field. Each semifield in our construction has centre and all three nuclei of order $q$, and the entire Knuth orbit has no commutative isotope.

We determine the isotopism between any two semifields in our family. For fixed $q=p^e\equiv1\pmod3$ and $n$, the family contains exactly $\varphi(n)$ $\F_q$-linear isotopy classes, where $\varphi$ is Euler's totient function. Under isotopy, the number is $\varphi(n)$ when $p\equiv1\pmod3$ and $\varphi(n)/2$ when $p\equiv2\pmod3$; in the latter case $e$ is necessarily even. 

Our nonisotopy proof uses the ranks of multiplication matrices at the vertices determined by the linear factors of a determinant. The determinant defines these vertices, but does not determine the ranks of its representing pencil there. For a finite presemifield, Frobenius acts transitively on these vertices, so all their ranks are equal. We use this common determinant-vertex rank as an isotopy invariant and combine it with the minimal field of definition of the absolute determinant components. To our knowledge, this isotopy invariant has not previously been used in the semifield literature. We also compute its value for finite fields, Albert's generalized twisted fields and Dickson semifields.

The rest part of this paper is organized as follows: Section~\ref{sec:preliminaries} recalls isotopy, nuclei, Ganley's criterion and Dickson matrices. Section~\ref{sec:construction} gives the construction and its uniform determinant factorization. Section~\ref{sec:invariant} defines the invariants and calculates them for several classical constructions of semifields. In Section~\ref{sec:structure}, we determine the left, middle and right nuclei of our new semifields, prove their nonisotopy to finite fields and Albert's generalized twisted fields, classify the isotopisms between members of the family, and address the prime-dimensional classification assertion.

\section{Preliminaries}\label{sec:preliminaries}

\subsection{Semifields, isotopisms and nuclei}

\begin{definition}\label{def:semifield}
A \emph{finite presemifield} is a triple $\Ps=(\mathbb P,+,*)$, where $|\mathbb P|>1$, $(\mathbb P,+)$ is an abelian group, multiplication is distributive over addition on both sides, and
\[
 x*y=0\quad\Longrightarrow\quad x=0\text{ or }y=0.
\]
A \emph{finite semifield} $\Ss=(\mathbb S,+,\circ)$ is a finite presemifield with a multiplicative identity.
\end{definition}

All presemifields in this paper are finite. Their additive groups are elementary abelian, so their orders are prime powers. Multiplication by any nonzero element on either side is an additive bijection. If a field $K$ acts on $(\mathbb P,+)$, we say explicitly that the multiplication is \emph{$K$-bilinear} when
\[
 (\lambda x)*y=\lambda(x*y)=x*(\lambda y)
 \qquad(\lambda\in K,\ x,y\in\mathbb P).
\]
The underlying vector space and its scalar action are fixed whenever a base field is specified.

\begin{definition}\label{def:isotopy}
Let $\Ss=(\mathbb S,+,*)$ and $\Ss'=(\mathbb S',+,\circ)$ be semifields of the same order. An \emph{isotopism} from $\Ss$ to $\Ss'$ is a triple $(A,B,C)$ of additive bijections $\mathbb S\to\mathbb S'$ such that
\begin{equation}\label{eq:isotopy}
 (Ax)\circ(By)=C(x*y)\qquad(x,y\in\mathbb S).
\end{equation}
The semifields are \emph{isotopic} if such a triple exists. The same definition applies to presemifields, without assuming multiplicative identities. If the two multiplications are $K$-bilinear, the isotopism is \emph{$K$-linear} when all three maps are $K$-linear. We write $\Ps\sim\Qs$ for isotopy and $\Ps\sim_K\Qs$ for $K$-linear isotopy.
\end{definition}

Every additive map in characteristic $p$ is $\F_p$-linear. Throughout the paper, isotopism without extra requirement means exactly Definition~\ref{def:isotopy}; a $K$-linearity hypothesis is stated separately when required. The isotopisms from a presemifield $\Ps$ to itself form its \emph{autotopism group}, denoted by $\Aut(\Ps)$. Thus $\Aut(\Ps)$ consists of triples of maps. An isotopism $(U,V,W)$ from $\Ps$ to $\Qs$ induces the group isomorphism
\[
 (A,B,C)\longmapsto(UAU^{-1},VBV^{-1},WCW^{-1}).
\]
In particular, its order depends only on the isotopy class.

We use Kaplansky's trick in the form given in \cite[Proposition~9]{Menichetti96}.
\begin{lemma}[Kaplansky's trick]\label{lem:kaplansky}
Let $\Ps=(\mathbb P,+,*)$ be a presemifield and choose $a\ne0$. Put $L_a(y)=a*y$ and $R_a(x)=x*a$. Then $(\mathbb P,+,\circ)$, with
\[
 x\circ y=R_a^{-1}(x)*L_a^{-1}(y),
\]
is a semifield isotopic to $\Ps$, with identity $a*a$. If $*$ is $K$-bilinear, this isotopism is $K$-linear.
\end{lemma}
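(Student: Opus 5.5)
The plan is to verify the three claims of Lemma~\ref{lem:kaplansky} directly from Definition~\ref{def:semifield}. These are that $\circ$ makes $(\mathbb P,+,\circ)$ a presemifield with identity $a*a$, that $\Ps$ is isotopic to it, and that the isotopism is $K$-linear when $*$ is $K$-bilinear. First, since $a\ne0$, the maps $L_a$ and $R_a$ are additive. They are injective by the no-zero-divisor property, and $\mathbb P$ is finite, so they are bijective. Hence $R_a^{-1}$ and $L_a^{-1}$ are additive bijections, and $\circ$ is well defined.

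Next, distributivity of $\circ$ follows from additivity of $R_a^{-1}$ and $L_a^{-1}$ together with the distributivity of $*$. For example,
\[
x\circ(y+z)=R_a^{-1}(x)*\bigl(L_a^{-1}y+L_a^{-1}z\bigr)=x\circ y+x\circ z,
\]
and the other side is symmetric. For the absence of zero divisors, suppose $x\circ y=0$. Then $R_a^{-1}x=0$ or $L_a^{-1}y=0$, and injectivity gives $x=0$ or $y=0$.

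For the identity, put $e=a*a$. Then $R_a^{-1}(e)=a$, because $R_a(a)=a*a$, and likewise $L_a^{-1}(e)=a$. Hence
\[
e\circ y=a*L_a^{-1}(y)=L_a\bigl(L_a^{-1}y\bigr)=y,
\]
and symmetrically $x\circ e=R_a\bigl(R_a^{-1}x\bigr)=x$. So $(\mathbb P,+,\circ)$ is a semifield with identity $a*a$.

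For the isotopism, the natural choice is $(A,B,C)=(R_a,L_a,\id)$ from $\Ps$ to $(\mathbb P,+,\circ)$. Substituting into \eqref{eq:isotopy} gives
\[
(R_ax)\circ(L_ay)=R_a^{-1}R_a x * L_a^{-1}L_a y=x*y,
\]
which is exactly \eqref{eq:isotopy} with $C=\id$.

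For the $K$-linear statement, $K$-bilinearity of $*$ gives $R_a(\lambda x)=(\lambda x)*a=\lambda(x*a)=\lambda R_a(x)$, and similarly for $L_a$. Inverses of $K$-linear bijections are $K$-linear, and $\id$ is $K$-linear, so the isotopism $(R_a,L_a,\id)$ is $K$-linear. The same computation shows that $\circ$ is $K$-bilinear.

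No step is a real obstacle. The only point needing care is the bijectivity of $L_a$ and $R_a$, which is where finiteness enters (injective implies surjective on a finite set). The rest is bookkeeping of which side each inverse acts on.
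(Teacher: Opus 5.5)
Your proof is correct and complete. It checks bijectivity of $L_a,R_a$ via finiteness, distributivity and absence of zero divisors for $\circ$, the two-sided identity $a*a$, and the isotopism $(R_a,L_a,\id)$ together with its $K$-linearity. The paper gives no proof of its own and only cites Menichetti's Proposition~9, which is this same standard direct verification.
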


For a semifield $\Ss=(\mathbb S,+,\circ)$, its \emph{left nucleus}, \emph{middle nucleus}, \emph{right nucleus} and \emph{centre} are, respectively,
\begin{align*}
 \Nl(\Ss)&=\{a:a\circ(x\circ y)=(a\circ x)\circ y\text{ for all }x,y\},\\
 \Nm(\Ss)&=\{a:x\circ(a\circ y)=(x\circ a)\circ y\text{ for all }x,y\},\\
 \Nr(\Ss)&=\{a:x\circ(y\circ a)=(x\circ y)\circ a\text{ for all }x,y\},\\
 \Z(\Ss)&=\{a\in\Nl(\Ss)\cap\Nm(\Ss)\cap\Nr(\Ss):a\circ x=x\circ a\text{ for all }x\}.
\end{align*}
These are finite fields, and their orders are invariant under isotopy; see \cite{Knuth,LP,MP}. A semifield is a vector space over each nucleus on the appropriate side. If its multiplication is $K$-bilinear, then $K$ embeds in the centre by $\lambda\mapsto\lambda1$. Its dimension over any nucleus consequently divides its dimension over $K$.

\subsection{Spread sets, nuclei and semilinearity}\label{subsec:spreadsets}\label{subsec:semilinearity}

Let $\Ps=(\mathbb P,+,*)$ have order $p^N$, and put $k=\F_p$ and $V=(\mathbb P,+)$. Following \cite{MP}, define its \emph{spread set} by
\begin{equation}\label{eq:spreadset}
 \mathscr C(\Ps)=\{R_y:y\in\mathbb P\}\subseteq\End_k(V),
 \qquad R_y(x)=x*y.
\end{equation}
It is an $N$-dimensional $k$-subspace in which every nonzero map is invertible. We compose maps from right to left. The \emph{dual presemifield} $\Ps^d=(\mathbb P,+,*^d)$ has $x*^dy=y*x$, so
\[
 \mathscr C(\Ps^d)=\{L_x:x\in\mathbb P\},\qquad L_x(y)=x*y.
\]
The distinction between these two spread sets will fix the left and right conventions below.

The spread-set formulation of isotopy is given in \cite[Proposition~2.1]{MPIsotopy}; see also \cite[Proposition~2.1]{MP}.
\begin{proposition}\label{prop:spread-isotopy}
Let $\Ps=(\mathbb P,+,*)$ and $\Qs=(\mathbb Q,+,\circ)$ be presemifields, and let $A,C:\mathbb P\to\mathbb Q$ be additive bijections. There is an isotopism with first and third components $A,C$ if and only if
\begin{equation}\label{eq:spread-equivalence}
 \mathscr C(\Qs)=C\mathscr C(\Ps)A^{-1}.
\end{equation}
In this case the second component $B$ is the unique additive bijection satisfying
\[
 R^{\Qs}_{By}=C R^{\Ps}_y A^{-1}\qquad(y\in\mathbb P).
\]
\end{proposition}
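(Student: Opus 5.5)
Both implications come from unwinding the isotopy identity \eqref{eq:isotopy} in terms of right multiplication maps. For $y\in\mathbb P$ write $R^{\Ps}_y(x)=x*y$, and for $z\in\mathbb Q$ write $R^{\Qs}_z(x)=x\circ z$. Throughout, $A$ and $C$ are additive bijections, and right-to-left composition is understood.

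\emph{Forward direction.} Suppose $(A,B,C)$ is an isotopism. Substituting $x=A^{-1}u$ into \eqref{eq:isotopy} gives, for every $u\in\mathbb Q$,
\[
u\circ By=C\bigl(A^{-1}u*y\bigr).
\]
This says exactly that
\[
R^{\Qs}_{By}=C R^{\Ps}_y A^{-1}\qquad(y\in\mathbb P).
\]
Since $B$ is a bijection, the elements $By$ run over all of $\mathbb Q$ as $y$ runs over $\mathbb P$. Hence $\mathscr C(\Qs)=C\mathscr C(\Ps)A^{-1}$.

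\emph{Converse.} Assume $\mathscr C(\Qs)=C\mathscr C(\Ps)A^{-1}$. For each $y$, the map $CR^{\Ps}_yA^{-1}$ therefore equals $R^{\Qs}_z$ for some $z\in\mathbb Q$. This $z$ is unique: from $R^{\Qs}_z=R^{\Qs}_{z'}$ we get $R^{\Qs}_{z-z'}=0$, and since $\Qs$ has no zero divisors and $|\mathbb Q|>1$, this forces $z=z'$. So the rule $By=z$ gives a well-defined map $B:\mathbb P\to\mathbb Q$.

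We check that $B$ has the required properties.
\begin{enumerate}
\item $B$ is additive. The assignments $y\mapsto R^{\Ps}_y$ and $z\mapsto R^{\Qs}_z$ are additive by distributivity, and conjugation by $C$ and $A^{-1}$ preserves sums. Together with the injectivity of $z\mapsto R^{\Qs}_z$, this gives additivity of $B$.
\item $B$ is injective. If $By=0$, then $CR^{\Ps}_yA^{-1}=0$, so $R^{\Ps}_y=0$, and hence $y=0$ by the same argument in $\Ps$.
\item $B$ is surjective. Both $\mathscr C(\Ps)$ and $\mathscr C(\Qs)$ have the same finite size, since $y\mapsto R_y$ is injective in each presemifield and $\mathbb P$, $\mathbb Q$ are in bijection via $A$. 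An injective map between finite sets of equal size is therefore onto.
\end{enumerate}

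Applying the identity $R^{\Qs}_{By}=CR^{\Ps}_yA^{-1}$ to $Ax$ recovers $(Ax)\circ(By)=C(x*y)$, so $(A,B,C)$ is an isotopism. Finally, any isotopism with first and third components $A$ and $C$ must satisfy the displayed identity by the forward direction. By the injectivity of $z\mapsto R^{\Qs}_z$, this determines $B$ uniquely.

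The only point needing care is the well-definedness and injectivity of $B$, which rests on the fact that $R_z=0$ forces $z=0$ in a presemifield. Everything else is routine.
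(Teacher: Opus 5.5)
Your proof is correct and complete. The paper gives no proof of this proposition; it quotes the result from the references cited immediately before it, and the standard argument there is this same direct translation of $(Ax)\circ(By)=C(x*y)$ into $R^{\Qs}_{By}=CR^{\Ps}_yA^{-1}$, with $B$ recovered from the injectivity of $z\mapsto R^{\Qs}_z$. (A small simplification: you can get surjectivity of $B$ straight from the inclusion $\mathscr C(\Qs)\subseteq C\mathscr C(\Ps)A^{-1}$, without counting, though your counting argument is valid since all presemifields here are finite.)
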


The multiplication of a presemifield has no identity in general. Its nuclear parameters therefore mean those of a semifield obtained by isotopy, as in Lemma~\ref{lem:kaplansky}. They can nevertheless be computed directly from its spread set, without first calculating that semifield multiplication. For a spread set $\mathscr C\subseteq\End_k(V)$, its \emph{left and right idealisers} are
\begin{equation}\label{eq:idealisers}
 I_l(\mathscr C)=\{T\in\End_k(V):T\mathscr C\subseteq\mathscr C\},
 \qquad
 I_r(\mathscr C)=\{T\in\End_k(V):\mathscr C T\subseteq\mathscr C\}.
\end{equation}
For $\omega\in\mathscr C\setminus\{0\}$, also put
\[
 \mathscr Z_\omega(\mathscr C)=
 \{T\in I_l(\mathscr C):TU=U\omega^{-1}T\omega
       \text{ for every }U\in\mathscr C\}.
\]
The following result in idealiser notation can be found in \cite[Theorem~2.2]{MP}.
\begin{proposition}\label{prop:spread-nuclei}
Let $\Ss=(\mathbb S,+,\circ)$ be a semifield isotopic to $\Ps$, and let $\mathscr C=\mathscr C(\Ps)$. The sets below are finite fields under addition and composition, and
\begin{equation}\label{eq:spread-nuclei}
 \begin{aligned}
  \Nl(\Ss)&\cong I_l\bigl(\mathscr C(\Ps^d)\bigr),&
  \Nm(\Ss)&\cong I_r(\mathscr C),\\
  \Nr(\Ss)&\cong I_l(\mathscr C),&
  \Z(\Ss)&\cong\mathscr Z_\omega(\mathscr C).
 \end{aligned}
\end{equation}
The centre isomorphism holds for every fixed $\omega\in\mathscr C\setminus\{0\}$.
\end{proposition}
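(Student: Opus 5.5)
Fix a semifield $\Ss=(\mathbb S,+,\circ)$ isotopic to $\Ps$. Because isotopies preserve the orders of the nuclei and the centre, and because Proposition~\ref{prop:spread-isotopy} tells us how spread sets change under isotopy, the plan is to reduce everything to the case $\Ps=\Ss$ and then compute the idealisers of $\mathscr C(\Ss)$ directly.

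The first step is this reduction. Let $(A,B,C)$ be an isotopism from $\Ps$ to $\Ss$. By Proposition~\ref{prop:spread-isotopy} we have $\mathscr C(\Ss)=C\mathscr C(\Ps)A^{-1}$. Consequently
\[
 I_l(\mathscr C(\Ss))=C\,I_l(\mathscr C(\Ps))\,C^{-1},\qquad
 I_r(\mathscr C(\Ss))=A\,I_r(\mathscr C(\Ps))\,A^{-1}.
\]
Conjugation is a ring isomorphism for addition and composition, so these idealisers are isomorphic as rings. For the dual, $(B,A,C)$ is an isotopism from $\Ps^d$ to $\Ss^d$, which gives the same conclusion for $I_l(\mathscr C(\Ps^d))$.

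For $\mathscr Z_\omega$ we check two things. First, if $T\in\mathscr Z_\omega(\mathscr C(\Ps))$, then $CTC^{-1}\in\mathscr Z_{C\omega A^{-1}}(\mathscr C(\Ss))$. Second, $\mathscr Z_\omega$ does not depend on $\omega$. To see the second point, expand the defining condition and use that $T\in I_l$: the map $U\mapsto TU$ preserves $\mathscr C$. Then compare the conditions for $\omega$ and $\omega'$ through $U\omega^{-1}$. This independence is exactly the last sentence of the proposition, and it is what allows $\omega$ to be arbitrary after the reduction.

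The second step is the direct computation for a semifield $\Ss$ with identity $1$. Here $\mathscr C=\{R_y\}$ and $R_1=\id$.

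For the right nucleus, take $a\in\Nr(\Ss)$ and consider $L_a$. From $L_aR_y(x)=a\circ(x\circ y)$ one wants an element of the form $R_{y'}(x)$. Working through the definitions suggests instead using $\Nr$ with maps $T_a\colon x\mapsto x\circ a$, tested against the identity $(x\circ y)\circ a=x\circ(y\circ a)$, i.e.\ $R_aR_y=R_{y\circ a}$. This gives $R_a\in I_l(\mathscr C)$.

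For the converse, take $T\in I_l(\mathscr C)$. Since $T=T R_1\in\mathscr C$, we have $T=R_a$ for some $a$. Then $R_aR_y=R_{z}$ for some $z$, and evaluating at $x=1$ gives $z=y\circ a$. This is the associativity condition defining $a\in\Nr(\Ss)$. The map $a\mapsto R_a$ is additive, and it is multiplicative up to order, which is harmless because the nucleus is a field. Hence $I_l(\mathscr C)\cong\Nr(\Ss)$.

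The other cases are similar:
\begin{itemize}
 \item[] \emph{Middle nucleus.} $T\in I_r$ forces $T=R_1T\in\mathscr C$, say $T=R_a$, and $R_yR_a=R_{a\circ y}$ is precisely the condition for $a\in\Nm(\Ss)$.
 \item[] \emph{Left nucleus.} Apply the right-nucleus argument to $\Ss^d$, since $\Nl(\Ss)=\Nr(\Ss^d)$.
 \item[] \emph{Centre.} Choose $\omega=R_1=\id$. The condition becomes $T\in I_l$ with $TU=UT$ for all $U\in\mathscr C$. With $T=R_a$, $a\in\Nr(\Ss)$, evaluating $R_aR_y=R_yR_a$ at $1$ gives $y\circ a=a\circ y$ for all $y$. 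Together with the other nuclear conditions this yields the centre.
\end{itemize}

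The main obstacle is the centre. Showing that commutation with all $R_y$, for $a\in\Nr(\Ss)$, also forces $a\in\Nl(\Ss)\cap\Nm(\Ss)$ needs a short argument. Commutativity $a\circ y=y\circ a$ together with right-associativity gives
\[
 (x\circ y)\circ a=x\circ(a\circ y),
\]
and from this one derives middle and left associativity. The standard fact that an element commuting with everything and lying in one nucleus lies in all of them would be proved by these manipulations. Finally, all these sets are finite division rings, because their nonzero elements are invertible spread-set maps closed under composition. By Wedderburn's theorem they are therefore fields.
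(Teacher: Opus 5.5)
The paper does not prove this proposition; it quotes it from Marino--Polverino, so your argument has to stand on its own. Your conjugation reduction to a semifield is fine, and so is your treatment of the three nuclei (using $T=TR_1$ or $T=R_1T\in\mathscr C$ and evaluating at $1$). The gap is in the centre. After taking $\omega=\id$, you evaluate $R_aR_y=R_yR_a$ only at $x=1$ and keep just $a\circ y=y\circ a$. You then assert that commutativity plus right associativity yields middle and left associativity, by the ``standard fact'' that an element commuting with everything and lying in one nucleus lies in all of them.

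That fact is false, even for the right nucleus. Let $E$ be a finite field of odd order, let $\sigma\neq\id$ be an automorphism of $E$, let $\delta\in E$ be a nonsquare, and put on $E^2$
\[
 (x_1,x_2)\circ(y_1,y_2)=\bigl(x_1y_1+\delta x_2^{\sigma}y_2,\ x_2y_1+x_1y_2\bigr).
\]
Here $(1,0)$ is an identity. The map $y\mapsto x\circ y$ is $E$-linear with determinant $x_1^2-\delta x_2^{\sigma+1}$, which is nonzero for $x\neq0$ because $x_2^{\sigma+1}$ is a square. So this is a semifield. Take $a=(\lambda,0)$ with $\lambda^\sigma\neq\lambda$. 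Then $a\circ y=y\circ a=\lambda y$ and $x\circ(y\circ a)=\lambda(x\circ y)=(x\circ y)\circ a$, so $a$ commutes with every element and lies in the right nucleus. However, the first coordinates of $(x\circ a)\circ y$ and $x\circ(a\circ y)$ differ by $\delta(\lambda^\sigma-\lambda)x_2^{\sigma}y_2$. Hence $a$ is not in the middle nucleus, and by the same computation not in the left nucleus either. Commutative Dickson semifields give the analogous failure for the middle nucleus. So no manipulation of commutativity and right associativity alone can finish your argument.

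The missing ingredient is the information you discarded. The identity $R_aR_y=R_yR_a$ as maps says $(x\circ y)\circ a=(x\circ a)\circ y$ for all $x,y$. Combined with $a\in\Nr(\Ss)$ and $a\circ y=y\circ a$, this gives
\[
 (x\circ a)\circ y=(x\circ y)\circ a=x\circ(y\circ a)=x\circ(a\circ y),
\]
so $a\in\Nm(\Ss)$. Then
\[
 a\circ(x\circ y)=(x\circ y)\circ a=x\circ(y\circ a)=x\circ(a\circ y)=(x\circ a)\circ y=(a\circ x)\circ y,
\]
so $a\in\Nl(\Ss)$. Conversely, $a\in\Z(\Ss)$ gives $R_aR_y=R_{y\circ a}=R_{a\circ y}=R_yR_a$. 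In the example above, $R_a$ indeed fails to commute with $R_y$, as the proposition requires.

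A smaller point: your justification that $\mathscr Z_\omega(\mathscr C)$ does not depend on $\omega$ is only gestured at, and $T\in I_l$ plays no role in it. Substituting $U=\omega'$ into the $\omega$-condition gives $\omega'^{-1}T\omega'=\omega^{-1}T\omega$, after which the two defining conditions coincide.
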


When multiplication is $K$-bilinear and $\dim_KV=n$, the spread set in \eqref{eq:spreadset} is an $n$-dimensional $K$-subspace of $\End_K(V)$. In matrices, it is a \emph{maximum rank distance code}: every nonzero matrix has rank $n$, and its size $|K|^n$ attains the rank-metric Singleton bound \cite{Delsarte}. Indeed, deleting any $n-1$ rows is injective on any code of minimum rank distance $n$, so that bound is $|K|^n$. Conversely, an $n$-dimensional $K$-linear matrix space of this kind defines a presemifield with $K$-bilinear multiplication after its elements are labelled $K$-linearly by $V$. The graphs of its maps, together with the vertical subspace, form a semifield spread. The same observations apply to $\mathscr C(\Ps^d)$.

The restriction on isotopisms that we need is the following result \cite[Theorem~2.2]{MPIsotopy}, restated in \cite[Theorem~2.9]{MP}.
\begin{lemma}[Semilinearity]\label{lem:semilinearity}
Let $\Ps$ and $\Qs$ be presemifields of order $p^N$, with their additive groups identified with $\F_{p^N}$. Suppose their spread sets are contained in $\End_K(\F_{p^N})$, where $K$ is a subfield of $\F_{p^N}$. If $(A,B,C)$ is an isotopism from $\Ps$ to $\Qs$, then $A$ and $C$ are $K$-semilinear maps with the same companion automorphism of $K$.
\end{lemma}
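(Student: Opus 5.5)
The plan is to reduce the semilinearity of $A$ and $C$ to a statement about idealisers, using Proposition~\ref{prop:spread-isotopy} and Proposition~\ref{prop:spread-nuclei}.

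First, fix the field $K$. For $\lambda\in K$, let $m_\lambda$ denote multiplication by $\lambda$ on $\F_{p^N}$. Because $\mathscr C(\Ps)$ and $\mathscr C(\Qs)$ consist of $K$-linear maps, every $m_\lambda$ commutes with every element of both spread sets. This is how I read the hypothesis ``contained in $\End_K$''. I will additionally need that $\mathscr C(\Ps)$ is closed under composition with the $m_\lambda$, so that $m_\lambda\in I_l(\mathscr C(\Ps))\cap I_r(\mathscr C(\Ps))$. This holds for the $K$-subspace spread sets coming from $K$-bilinear multiplications, and I will state it as part of the intended hypotheses. Next, by Proposition~\ref{prop:spread-isotopy} we have $\mathscr C(\Qs)=C\mathscr C(\Ps)A^{-1}$. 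Conjugation therefore gives bijections
\[
I_l(\mathscr C(\Qs))=C\,I_l(\mathscr C(\Ps))\,C^{-1},\qquad I_r(\mathscr C(\Qs))=A\,I_r(\mathscr C(\Ps))\,A^{-1}.
\]
Indeed, if $T\mathscr C(\Ps)\subseteq\mathscr C(\Ps)$, then $(CTC^{-1})\,C\mathscr C(\Ps)A^{-1}\subseteq C\mathscr C(\Ps)A^{-1}$, and the right-hand case is symmetric. These idealisers are finite fields by Proposition~\ref{prop:spread-nuclei}.

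Second, locate $K$ inside these fields. The set $\{m_\lambda\}$ is a copy of $K$ inside the field $I_l(\mathscr C(\Ps))$. Conjugation by $C$ carries it to a subfield of $I_l(\mathscr C(\Qs))$ of the same order, and a finite field has a unique subfield of each order. So $C m_\lambda C^{-1}=m_{\sigma(\lambda)}$ for some field automorphism $\sigma$ of $K$, which is exactly $K$-semilinearity of $C$ with companion $\sigma$. The same argument with $I_r$ shows that $A m_\lambda A^{-1}=m_{\tau(\lambda)}$, so $A$ is $K$-semilinear with companion $\tau$.

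Third, and this is the main obstacle, show that $\sigma=\tau$. Take any $R\in\mathscr C(\Ps)$ and $\lambda\in K$. Since $R$ is $K$-linear, $m_\lambda R=Rm_\lambda$. Conjugating, the map $CRA^{-1}\in\mathscr C(\Qs)$ satisfies
\[
m_{\sigma(\lambda)}\,CRA^{-1}=C\,m_\lambda R\,A^{-1}=C\,R\,m_\lambda A^{-1}=CRA^{-1}\,m_{\tau(\lambda)}.
\]
But $CRA^{-1}$ is $K$-linear, so it commutes with $m_{\tau(\lambda)}$. Hence $m_{\sigma(\lambda)}S=m_{\tau(\lambda)}S$ for the invertible map $S=CRA^{-1}$ (take $R\neq0$). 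This gives $\sigma(\lambda)=\tau(\lambda)$.

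The delicate point is the idealiser containment in the first step. If one only assumes elementwise $K$-linearity, $\{m_\lambda\}$ might not lie in the idealisers, and then I would instead argue with the field of all $\F_p$-linear maps commuting with $\mathscr C$, namely the centraliser. Its image under the maps $X\mapsto CXC^{-1}$ and $X\mapsto AXA^{-1}$ requires comparing $C$-conjugates and $A$-conjugates, and that is where the computation in the third step enters again.
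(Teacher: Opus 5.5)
Your three steps are correct under the extra hypothesis you add: that $\mathscr C(\Ps)$ and $\mathscr C(\Qs)$ are closed under composition with every $m_\lambda$, $\lambda\in K$. That hypothesis holds wherever the paper uses the lemma, because there all multiplications are $K$-bilinear and $m_\lambda R_y=R_ym_\lambda=R_{\lambda y}$. The paper gives no proof of its own and cites \cite[Theorem~2.2]{MPIsotopy}.

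As a proof of the lemma as stated, however, Steps~1--2 break down. Elementwise $K$-linearity need not put $m_\lambda$ into $I_l(\mathscr C(\Ps))$ or $I_r(\mathscr C(\Ps))$. Take a generalized twisted field $x\circ y=xy-cx^\tau y^\nu$ with $K=\operatorname{Fix}(\tau)\not\subseteq\operatorname{Fix}(\nu)$; for example, on $\F_{p^4}$ with $p$ odd, $x^\tau=x^{p^2}$, $y^\nu=y^p$ and $\N_{\F_{p^4}/\F_p}(c)\ne1$. Every $R_y\colon x\mapsto xy-cy^\nu x^\tau$ is $K$-linear. But $\lambda R_y=R_{y'}$ with $y\ne0$ forces $y'=\lambda y$ and $\lambda^\nu=\lambda$. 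Since $R_ym_\lambda=m_\lambda R_y$, it follows that $m_\lambda\notin I_l(\mathscr C)\cup I_r(\mathscr C)$ for $\lambda\in K\setminus\operatorname{Fix}(\nu)$.

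Your closing paragraph does not repair this. If $T$ centralises $\mathscr C(\Ps)$, then $CTC^{-1}\cdot CSA^{-1}=CSA^{-1}\cdot ATA^{-1}$. So $CTC^{-1}$ centralises $\mathscr C(\Qs)$ exactly when $ATA^{-1}=CTC^{-1}$, which is what you are trying to prove.

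The missing observation is that $R_y(\lambda x)=\lambda R_y(x)$ says exactly $L_{\lambda x}=m_\lambda L_x$, i.e.\ $m_\lambda\in I_l(\mathscr C(\Ps^d))$. So the hypothesis places $K$ in the left nucleus, not in the right or middle one. Since $(B,A,C)$ is an isotopism from $\Ps^d$ to $\Qs^d$, you have $\mathscr C(\Qs^d)=C\,\mathscr C(\Ps^d)\,B^{-1}$, hence $I_l(\mathscr C(\Qs^d))=C\,I_l(\mathscr C(\Ps^d))\,C^{-1}$. Your Step~2 argument, run in these two fields, gives $Cm_\lambda C^{-1}=m_{\theta(\lambda)}$ for some automorphism $\theta$ of $K$. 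Then for any $y\ne0$,
\[
 A=\bigl(R^{\Qs}_{By}\bigr)^{-1}\,C\,R^{\Ps}_y
\]
is a composite of $K$-linear, $\theta$-semilinear and $K$-linear maps. So $A$ is $\theta$-semilinear with the same companion, and this replaces both your $I_r$ argument and Step~3.

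Alternatively, work with the kernel $\{(T_1,T_2):T_2S=ST_1\text{ for all }S\in\mathscr C\}$ of the spread.
\begin{itemize}
\item It is a finite division ring, hence a field: if $T_1x=0$ with $x\ne0$, then $T_2$ kills $\{Sx:S\in\mathscr C\}=\F_{p^N}$, so $T_2=0$ and then $T_1=0$.
\item It contains every $(m_\lambda,m_\lambda)$ under the elementwise hypothesis alone.
\item The map $(T_1,T_2)\mapsto(AT_1A^{-1},CT_2C^{-1})$ carries the kernel for $\Ps$ onto the kernel for $\Qs$.
\end{itemize}
Uniqueness of the subfield of order $|K|$ then gives $Am_\lambda A^{-1}=Cm_\lambda C^{-1}=m_{\theta(\lambda)}$ in one step, with equal companions automatically.
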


If both multiplications are $K$-bilinear, their dual spread sets are also contained in $\End_K(\F_{p^N})$. Applying the lemma to $(B,A,C)$ between the dual presemifields shows that all three maps have one common companion automorphism $\theta\in\Gal(K/\F_p)$. Choose a $\theta$-semilinear coordinate bijection $\phi_\theta$ and define the \emph{Galois twist} $\Ps^{[\theta]}=(\mathbb P,+,*^{[\theta]})$ by
\begin{equation}\label{eq:galois-twist}
 x*^{[\theta]}y
 =\phi_\theta\bigl(\phi_\theta^{-1}(x)*\phi_\theta^{-1}(y)\bigr).
\end{equation}
Factoring this common semilinear map from the isotopism gives
\begin{equation}\label{eq:semilinear-reduction}
 \Ps\sim\Qs
 \quad\Longleftrightarrow\quad
 \Ps^{[\theta]}\sim_K\Qs
 \text{ for some }\theta\in\Gal(K/\F_p).
\end{equation}
The field-automorphism twist cannot generally be discarded for two fixed presemifields. For the present family it will change only the coefficient parameter.

We also use Ganley's commutativity criterion \cite[Theorem~4]{Ganley72} and its presemifield extension due to Bierbrauer \cite[Section~6]{Bierbrauer2016}.
\begin{lemma}[Ganley--Bierbrauer]\label{lem:ganley}
A presemifield $\Ps=(\mathbb P,+,*)$ is isotopic to a commutative semifield if and only if there is an additive bijection $H$ of $\mathbb P$ such that
\begin{equation}\label{eq:ganley-pre}
 x*(Hy)=y*(Hx)\qquad(x,y\in\mathbb P).
\end{equation}
For a semifield $\Ss=(\mathbb S,+,\circ)$, this is equivalent to the existence of $a\ne0$ satisfying
\begin{equation}\label{eq:ganley-semifield}
 (a\circ x)\circ y=(a\circ y)\circ x
 \qquad(x,y\in\mathbb S).
\end{equation}
\end{lemma}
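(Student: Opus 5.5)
We prove the two assertions of Lemma~\ref{lem:ganley} separately. For the presemifield criterion, the plan is to reduce to the semifield criterion through Kaplansky's trick (Lemma~\ref{lem:kaplansky}), using that isotopy preserves the existence of an $H$ as in \eqref{eq:ganley-pre}. First we check this transport property. Let $(A,B,C)$ be an isotopism from $\Ps$ to $\Qs$, so that $(Ax)\circ(By)=C(x*y)$. Suppose $x*(Hy)=y*(Hx)$ holds for all $x,y$. Put $x=A^{-1}u$ and $y=A^{-1}v$. Then
\[
 C\bigl(A^{-1}u*HA^{-1}v\bigr)=u\circ\bigl(BHA^{-1}v\bigr),
\]
and the same holds with $u,v$ interchanged. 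Hence $H'=BHA^{-1}$ satisfies $u\circ(H'v)=v\circ(H'u)$. Next, if $\Qs$ is commutative, we take $H=\id$. Consequently, if $\Ps$ is isotopic to a commutative semifield, then transporting back along the inverse isotopism yields a suitable $H$ for $\Ps$.

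For the converse, we assume \eqref{eq:ganley-pre} holds and define $x\star y=x*(Hy)$. This is commutative by hypothesis. It is also a presemifield, because $H$ is an additive bijection, so the nonsingularity of $*$ carries over to $\star$. Moreover, $(\id,H^{-1},\id)$ is an isotopism from $\Ps$ to $(\mathbb P,+,\star)$. We then apply Lemma~\ref{lem:kaplansky} to $\star$. Since $L_a=R_a$ for a commutative multiplication, the product $x\circ y=R_a^{-1}(x)\star R_a^{-1}(y)$ is commutative, and it has an identity. This gives a commutative semifield isotopic to $\Ps$.

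For the semifield form, we again use the transport above. Suppose a semifield $\Ss$ has some $H$ satisfying $x\circ(Hy)=y\circ(Hx)$. Substituting $x=1$ gives $Hy=y\circ H(1)$, so $H=R_b$ with $b=H(1)$. Here $b\ne0$ because $H$ is bijective. The condition thus becomes $x\circ(y\circ b)=y\circ(x\circ b)$. This has the form of \eqref{eq:ganley-semifield} with the factors written in the opposite order, so we have to match the conventions. The cleanest route is to apply the argument to the dual semifield $\Ss^d$, which is isotopic to a commutative semifield exactly when $\Ss$ is, since commutativity is preserved under taking duals and the dual of an isotopism is an isotopism. In $\Ss^d$ the condition reads $(b\circ y)\circ x=(b\circ x)\circ y$, which is \eqref{eq:ganley-semifield} with $a=b$. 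Conversely, given $a\ne0$ satisfying \eqref{eq:ganley-semifield}, we reverse the same chain and obtain $H=L_a$ for the presemifield $\Ss^d$, or equivalently a suitable map for $\Ss$ via the dual.

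The main obstacle is this left/right bookkeeping. One must check carefully which of $L_a$ or $R_a$ plays the role of $H$ so that \eqref{eq:ganley-semifield} comes out literally with $a$ multiplied on the left inside the parentheses. We would first try taking $H=L_a$ directly in \eqref{eq:ganley-pre} for $\Ss$ and then verify the resulting identity. If that route fails, we would pass to the dual as described above.
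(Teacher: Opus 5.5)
Your proof is correct, and it takes a different route from the paper, which does not prove the lemma itself. The paper cites Ganley's Theorem~4 for the semifield form and Bierbrauer's Section~6 for the presemifield form. Its only argument is a change of notation: it substitutes $H=U^{-1}$ into Bierbrauer's condition $Ux*y=Uy*x$, where $U=R_u^{-1}L_v$.

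You instead derive everything from first principles:
\begin{enumerate}
\item the transport rule $H\mapsto BHA^{-1}$ along an isotopism;
\item the observation that $x\star y=x*(Hy)$ is a commutative presemifield isotopic to $\Ps$ via $(\id,H^{-1},\id)$;
\item Kaplansky's trick with $L_a=R_a$, which supplies an identity while keeping commutativity;
\item for semifields, the substitution $x=1$, which forces $H=R_{H(1)}$, followed by passage to $\Ss^d$ to turn $x\circ(y\circ b)=y\circ(x\circ b)$ into $(b\circ y)\circ x=(b\circ x)\circ y$.
\end{enumerate}

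Your route is self-contained, and it explicitly proves that an arbitrary additive bijection $H$ suffices, a point the paper's notational remark leaves implicit. The citation buys brevity and the link to the normalized form $H^{-1}=R_u^{-1}L_v$ used in the literature.

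One cleanup: drop the fallback of trying $H=L_a$ directly for $\Ss$. Your own $x=1$ computation shows that every admissible $H$ for $\Ss$ is a right multiplication, so that route cannot yield \eqref{eq:ganley-semifield}. The dual argument you give is the one that works. Alternatively, use the mirrored condition $(Gx)\circ y=(Gy)\circ x$ and set $y=1$ to get $G=L_{G(1)}$. For sufficiency it is enough to note that $(L_a,\id,\id)$ is an isotopism from the commutative presemifield $x\star y=(a\circ x)\circ y$ to $\Ss$.
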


To relate the stated presemifield form to Bierbrauer's notation, fix $u\ne0$ and let $\alpha=R_u^{-1}$. His criterion is the existence of $v\ne0$ such that $Ux*y=Uy*x$, where $U=\alpha L_v$. Substituting $x=H x'$, $y=H y'$ with $H=U^{-1}$ gives \eqref{eq:ganley-pre}. Thus the displayed form changes only the placement of the invertible map, not the criterion.

\subsection{Linearized polynomials and Dickson matrices}

Let $K=\F_h$ and $F=\F_{h^n}$. Every $K$-linear endomorphism of $F$ has a unique expression
\begin{equation}\label{eq:linearized}
 L(x)=\sum_{i=0}^{n-1}a_i x^{h^i},\qquad a_i\in F.
\end{equation}
Such an expression is a \emph{linearized polynomial}. Uniqueness follows from the root bound for a nonzero polynomial of degree at most $h^{n-1}$; existence then follows by comparing the $K$-dimensions of the coefficient space and $\End_K(F)$.

Its \emph{Dickson matrix}, denoted by $D_L$, is
\begin{equation}\label{eq:dickson}
 D_L=(a_{j-i}^{h^i})_{0\le i,j<n},
\end{equation}
with subscripts modulo $n$. If
\[
 \Phi_h(x)=(x,x^h,\ldots,x^{h^{n-1}})^{\mathsf T},
\]
then $D_L\Phi_h(x)=\Phi_h(L(x))$. We recall the standard rank correspondence; see \cite[Lemma~3 and Proposition~6]{Menichetti96} and \cite{WuLiu}.
\begin{lemma}\label{lem:moore}
Let $b_0,\ldots,b_{n-1}$ be a $K$-basis of $F$ and put $M=(b_j^{h^i})_{i,j}$. Then $M$ is invertible. If $P_L$ is the matrix of $L$ in this basis, then
\[
 D_LM=MP_L,\qquad \rank_F D_L=\rank_K P_L.
\]
In particular, $L$ is invertible if and only if $\det D_L\ne0$.
\end{lemma}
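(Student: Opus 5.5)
The plan is to prove the two assertions of Lemma~\ref{lem:moore} in order: first the invertibility of $M$, then the intertwining relation $D_LM=MP_L$, from which the rank equality and the invertibility criterion follow at once.

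\emph{Invertibility of $M$.} Suppose $Mc=0$ for some column vector $c=(c_0,\ldots,c_{n-1})^{\mathsf T}\in F^n$. Equivalently, $\sum_j c_j b_j^{h^i}=0$ for $0\le i<n$, so the $F$-valued function $x\mapsto\sum_j c_j\ldots$ is not the right object; instead we argue on rows. Suppose a row combination vanishes: $\sum_i d_i b_j^{h^i}=0$ for all $j$. By $K$-linearity of $x\mapsto x^{h^i}$, the linearized polynomial $\sum_i d_i x^{h^i}$ then vanishes on every $K$-combination of the $b_j$, which is all of $F$. It has degree at most $h^{n-1}<h^n$, so it is the zero polynomial and every $d_i=0$. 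Thus the rows of $M$ are independent and $M$ is invertible. This is the same root-bound argument the paper uses for uniqueness of the expression \eqref{eq:linearized}.

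\emph{The intertwining relation.} The $j$-th column of $M$ is $\Phi_h(b_j)$. From $D_L\Phi_h(x)=\Phi_h(L(x))$, the $j$-th column of $D_LM$ is $\Phi_h(L(b_j))$. Write $L(b_j)=\sum_k (P_L)_{kj}b_k$ with $(P_L)_{kj}\in K$. Since each map $x\mapsto x^{h^i}$ fixes $K$ and is additive, $\Phi_h$ is $K$-linear, so
\[
\Phi_h(L(b_j))=\sum_k (P_L)_{kj}\Phi_h(b_k),
\]
which is exactly the $j$-th column of $MP_L$. The relation $D_L\Phi_h(x)=\Phi_h(L(x))$ itself is a one-line check: the $i$-th entry of $D_L\Phi_h(x)$ is $\sum_j a_{j-i}^{h^i}x^{h^j}=\bigl(\sum_m a_m x^{h^m}\bigr)^{h^i}$, using that the exponents $h^{m+i}$ may be read modulo $h^n$ because $x^{h^n}=x$ on $F$.

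\emph{Conclusion.} From the relation, $D_L=MP_LM^{-1}$, so $\rank_F D_L=\rank_F P_L$. Row reduction of a matrix with entries in $K$ never leaves $K$, so its rank does not change under field extension; hence $\rank_F P_L=\rank_K P_L$. Finally, $L$ is invertible if and only if $P_L$ has full rank $n$, which by the rank equality holds if and only if $\det D_L\ne0$.

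The only step I expect to be delicate is the invertibility of $M$, specifically choosing the vanishing combination so that the linearized-polynomial argument applies. Once the argument is run on rows, as above, it is routine.
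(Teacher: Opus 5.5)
Your proof is correct and follows the standard argument. The paper gives no proof of its own and cites Menichetti and Wu--Liu; the displayed identity $D_LM=MP_L$ is exactly the Moore-matrix similarity you establish, and you get invertibility of $M$ from the same root bound the paper uses for uniqueness of linearized polynomials. Two cosmetic fixes: delete the abandoned opening about $Mc=0$, and in checking $D_L\Phi_h(x)=\Phi_h(L(x))$ say that the index $m+i$ is read modulo $n$ (equivalently, exponents modulo $h^n-1$), not that exponents are read modulo $h^n$.
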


Let $\Ps=(\mathbb P,+,*)$ have $K$-bilinear multiplication and dimension $n$ over $K$. After fixing a $K$-linear identification of $(\mathbb P,+)$ with $(F,+)$, write its multiplication polynomial as
\begin{equation}\label{eq:bilinearized}
 f_*(x,y):=x*y=\sum_{i,j=0}^{n-1}c_{ij}x^{h^i}y^{h^j}.
\end{equation}
In independent variables $X_i,Y_j$, its \emph{split output forms} are
\[
 f_{*,r}(\Bf X,\Bf Y)=\sum_{i,j}c_{ij}^{h^r}X_{i+r}Y_{j+r},
 \qquad 0\le r<n.
\]
Define the \emph{split multiplication matrices} $L_{\Ps}(\Bf X)$ and $R_{\Ps}(\Bf Y)$ by
\begin{equation}\label{eq:splitmaps}
 L_{\Ps}(\Bf X)\Bf Y=R_{\Ps}(\Bf Y)\Bf X
 =(f_{*,0},\ldots,f_{*,n-1})^{\mathsf T}.
\end{equation}
For field elements $x$ and $y$,
\begin{equation}\label{eq:actualpoints}
 L_{\Ps}(\Phi_h(x))\Phi_h(y)=\Phi_h(x*y).
\end{equation}
The independent variables describe the multiplication after extension of scalars. They are not arbitrary coordinates of an original field element: when $x\ne0$, every entry of $\Phi_h(x)$ is nonzero.

\subsection{Multiplication determinants and the trace form}\label{subsec:determinants}

Fix an ordinary $K$-basis $\mathcal B=(b_0,\ldots,b_{n-1})$ of the additive space of $\Ps$, identified $K$-linearly with $F=\F_{h^n}$. We use $\Bf U,\Bf V,\Bf W$ for ordinary coordinate vectors and $\Bf X,\Bf Y,\Bf Z$ for split coordinate vectors. If $[T]_{\mathcal B}$ is the ordinary matrix of a $K$-linear map $T$, put
\[
 A_{L,\Ps}(\Bf U)=\sum_{j=0}^{n-1}U_j[L_{b_j}]_{\mathcal B},
 \qquad
 A_{R,\Ps}(\Bf V)=\sum_{j=0}^{n-1}V_j[R_{b_j}]_{\mathcal B}.
\]
These are the \emph{ordinary multiplication matrices}. Their determinants are the \emph{left determinant form} and the \emph{right determinant form}, respectively:
\[
 D_{L,\Ps}(\Bf U)=\det A_{L,\Ps}(\Bf U),
 \qquad
 D_{R,\Ps}(\Bf V)=\det A_{R,\Ps}(\Bf V).
\]
They belong to polynomial rings over $K$, have degree $n$, and have no nonzero $K$-zero. We retain $L_{\Ps}(\Bf X)$ and $R_{\Ps}(\Bf Y)$ for the split matrices in \eqref{eq:splitmaps}, and write
\[
 \widetilde D_{L,\Ps}(\Bf X)=\det L_{\Ps}(\Bf X),
 \qquad
 \widetilde D_{R,\Ps}(\Bf Y)=\det R_{\Ps}(\Bf Y)
\]
for their \emph{split determinant forms}.

A third matrix family is defined by the \emph{trace trilinear form}
\begin{equation}\label{eq:tracetensor}
 T_{\Ps}(x,y,z)=\Tr_{F/K}\bigl(z(x*y)\bigr).
\end{equation}
For a trilinear form $T(x,y,z)$ and a fixed vector $x_0$, the map $(y,z)\mapsto T(x_0,y,z)$ is a bilinear form in $y$ and $z$. This operation is called \emph{contraction} of $T$ with $x_0$ in the first variable; contractions in the second and third variables are defined similarly. For \eqref{eq:tracetensor}, the three choices are
\[
 \begin{array}{c|c}
 \text{fixed variable}&\text{bilinear form}\\\hline
 x&(y,z)\mapsto\Tr_{F/K}\bigl(z(x*y)\bigr)\\
 y&(x,z)\mapsto\Tr_{F/K}\bigl(z(x*y)\bigr)\\
 z&(x,y)\mapsto\Tr_{F/K}\bigl(z(x*y)\bigr).
 \end{array}
\]
The first two are multiplication matrices after identifying the output with its dual by the trace pairing. For the third, define the \emph{ordinary trace matrix} by
\[
 \bigl(A_{\mathrm{tr},\Ps}(\Bf W)\bigr)_{ij}
 =\sum_{k=0}^{n-1}W_k\Tr_{F/K}\bigl(b_k(b_i*b_j)\bigr).
\]
Thus $\Bf U^{\mathsf T}A_{\mathrm{tr},\Ps}(\Bf W)\Bf V$ is the ordinary-coordinate expression of the scalar extension of \eqref{eq:tracetensor}. Its determinant
\[
 D_{\mathrm{tr},\Ps}(\Bf W)=\det A_{\mathrm{tr},\Ps}(\Bf W)\in K[\Bf W]
\]
is the \emph{trace determinant form}.

In split coordinates the trace pairing is the dot product, since $\Tr(uv)=\sum_i u^{h^i}v^{h^i}$. Write
\[
 \widetilde T_{\Ps}(\Bf X,\Bf Y,\Bf Z)
 =\sum_{r=0}^{n-1}Z_rf_{*,r}(\Bf X,\Bf Y)
\]
for the \emph{split trace tensor}, and let $M_{\Ps}^{\mathrm{tr}}(\Bf Z)$ be its coefficient matrix in $\Bf X,\Bf Y$. The three expressions in these coordinates are
\begin{equation}\label{eq:three-matrices}
 \begin{aligned}
 \widetilde T_{\Ps}(\Bf X,\Bf Y,\Bf Z)
 &=\Bf Z^{\mathsf T}L_{\Ps}(\Bf X)\Bf Y
 =\Bf Z^{\mathsf T}R_{\Ps}(\Bf Y)\Bf X\\
 &=\Bf X^{\mathsf T}M_{\Ps}^{\mathrm{tr}}(\Bf Z)\Bf Y.
 \end{aligned}
\end{equation}
We set
\[
 \widetilde D_{\mathrm{tr},\Ps}(\Bf Z)
 =\det M_{\Ps}^{\mathrm{tr}}(\Bf Z).
\]
The ordinary trace determinant also has no nonzero $K$-zero. For $z\ne0$ and $x\ne0$, surjectivity of $y\mapsto x*y$ and nondegeneracy of the trace pairing imply that $T_{\Ps}(x,y,z)$ does not vanish for every $y$. Hence the fixed-$z$ bilinear form is nonsingular. Permuting the three variables gives the six Knuth operations, with the usual trace identifications \cite{Knuth,LavrauwTensors}.

Let $M=(b_j^{h^i})_{i,j}$ be the Moore matrix of $\mathcal B$. We relate the ordinary and split polynomial coordinates by the invertible linear substitutions
\[
\Bf X=M\Bf U,\qquad
\Bf Y=M\Bf V,\qquad
\Bf Z=M\Bf W.
\]
For example,
\[
X_i=\sum_{j=0}^{n-1}b_j^{h^i}U_j.
\]
Here the $U_j$ are independent indeterminates, and, since $M$ is invertible, the $X_i$ form another algebraically independent coordinate system. No relations $X_{i+1}=X_i^h$ are imposed. After specializing $U_j=u_j\in K$, however, and putting $x=\sum_jb_ju_j$, we obtain
\[
X_i(\Bf u)=x^{h^i}.
\]
Thus the Frobenius relations hold for the coordinates of actual elements of $F$, but not as identities between the independent polynomial coordinates. The Moore-matrix similarity for the multiplication maps and the change of coordinates in the trace bilinear form give
\begin{equation}\label{eq:ordinary-split-determinants}
 \begin{aligned}
 D_{L,\Ps}(\Bf U)&=\widetilde D_{L,\Ps}(M\Bf U),\\
 D_{R,\Ps}(\Bf V)&=\widetilde D_{R,\Ps}(M\Bf V),\\
 D_{\mathrm{tr},\Ps}(\Bf W)
 &=(\det M)^2\widetilde D_{\mathrm{tr},\Ps}(M\Bf W).
 \end{aligned}
\end{equation}
These are polynomial identities. The scalar $(\det M)^2$ lies in $K^\times$, since $M^{\mathsf T}M$ is the nonsingular Gram matrix
$(\Tr_{F/K}(b_i b_j))_{i,j}$ of the trace pairing.

Norm expressions in ordinary coordinates are understood as
\emph{polynomial norms}:
\begin{equation}\label{eq:polynomial-norm}
 \N_{F/K}\!\left(\sum_j b_jU_j\right)
 =\prod_{i=0}^{n-1}\left(\sum_j b_j^{h^i}U_j\right).
\end{equation}
The conjugations act on coefficients and fix the independent variables
$U_j$, rather than raising those variables to powers.

Let $(A,B,C)$ be a $K$-linear isotopism from $\Ps$ to $\Qs$. Denote its ordinary coordinate matrices by $A_0,B_0,C_0$, and its split matrices by $A_{\mathrm{sp}},B_{\mathrm{sp}},C_{\mathrm{sp}}$. In ordinary coordinates,
\[
 A_{L,\Qs}(A_0\Bf U)B_0=C_0A_{L,\Ps}(\Bf U),
 \qquad
 A_{R,\Qs}(B_0\Bf V)A_0=C_0A_{R,\Ps}(\Bf V).
\]
In the split coordinates the left identity is
\begin{equation}\label{eq:pencilisotopy}
 L_{\Qs}(A_{\mathrm{sp}}\Bf X)B_{\mathrm{sp}}
 =C_{\mathrm{sp}}L_{\Ps}(\Bf X),
\end{equation}
and the corresponding split trace tensors satisfy
\[
 \widetilde T_{\Qs}(A_{\mathrm{sp}}\Bf X,B_{\mathrm{sp}}\Bf Y,
 C_{\mathrm{sp}}^{-\mathsf T}\Bf Z)
 =\widetilde T_{\Ps}(\Bf X,\Bf Y,\Bf Z).
\]
For clarity, the trace-variable change in ordinary coordinates is generally not $C_0^{-\mathsf T}$. If $G_{\Ps}$ and $G_{\Qs}$ are the Gram matrices of the two trace pairings, set
\[
 C_\vee=G_{\Qs}^{-1}C_0^{-\mathsf T}G_{\Ps}.
\]
Then
\[
 A_0^{\mathsf T}A_{\mathrm{tr},\Qs}(C_\vee\Bf W)B_0
 =A_{\mathrm{tr},\Ps}(\Bf W).
\]
The matrix $C_\vee$ represents the inverse of the trace adjoint of $C$ and is invertible over $K$. Each ordinary determinant form is therefore preserved up to an invertible $K$-linear substitution and a nonzero scalar. The split forms satisfy the corresponding identities in their specified coordinates. These are the matrix forms of Proposition~\ref{prop:spread-isotopy}, using the left spread set when a left multiplication matrix occurs.

\section{Construction}\label{sec:construction}

Throughout this section, let
\begin{equation}\label{eq:admissible}
 q=p^e\equiv1\pmod3,\qquad n\ge5,\qquad \gcd(n,6)=1,
 \qquad K=\F_q,\quad F=\F_{q^n}.
\end{equation}
Here $p$ is prime and $e\ge1$; the extension degree $n=[F:K]$ is independent of $e$. The condition on $q$ allows every $e\ge1$ if $p\equiv1\pmod3$, and exactly the even $e$ if $p\equiv2\pmod3$. Thus $|F|=p^{en}$, while the dimension over $K$ is $n$.

Choose $w\in K$ satisfying
\begin{equation}\label{eq:parameter}
 w^2-w+1=0.
\end{equation}
Such an element exists because $K^\times$ has an element $\omega$ of order three, and one can take $w=-\omega$. The characteristic is not three, and
\begin{equation}\label{eq:w-relations}
 w^3=-1,\qquad w^{-1}=1-w,\qquad w\ne0,-1,\qquad w\ne w^{-1}.
\end{equation}
Let $s$ be a unit modulo $n$ and put $\sigma(x)=x^{q^s}$. Thus $\sigma$ generates $\Gal(F/K)$. Negative superscripts on $\sigma$ denote inverse automorphisms, not reciprocals of field elements.

Define $\Ps_{w,s}=(F,+,*_{w,s})$ by
\begin{equation}\label{eq:construction}
\begin{split}
 x*_{w,s}y={}&x^\sigma y^{\sigma^2}
 +x^\sigma y^{\sigma^{-1}}
 +x^{\sigma^{-2}}y^{\sigma^{-1}}\\
 &+w\bigl(x^{\sigma^2}y^\sigma
 +x^{\sigma^{-1}}y^\sigma
 +x^{\sigma^{-1}}y^{\sigma^{-2}}\bigr).
\end{split}
\end{equation}
In the case $n=5$, $s=1$, the exponents in the first line are $(1,2),(1,4),(3,4)$ in $q$-Frobenius notation.

For polynomial calculations, write $f_{w,s}(x,y)=x*_{w,s}y$. Use independent coordinates $X_i,Y_i$ indexed by $i\in\mathbb Z/n\mathbb Z$ in the $\sigma$-ordering of the embeddings. The \emph{split output forms} are
\begin{equation}\label{eq:family-split}
\begin{split}
 f_i={}&X_{i+1}Y_{i+2}+X_{i+1}Y_{i-1}+X_{i-2}Y_{i-1}\\
 &+w\bigl(X_{i+2}Y_{i+1}+X_{i-1}Y_{i+1}+X_{i-1}Y_{i-2}\bigr).
\end{split}
\end{equation}
Write $L_w(\Bf X)=L_{\Ps_{w,s}}(\Bf X)$ in this $\sigma$-ordering; it is the coefficient matrix in $\Bf Y$. Its only possibly nonzero entries in row $i$ are
\begin{equation}\label{eq:family-entries}
\begin{aligned}
 (L_w)_{i,i+2}&=X_{i+1},&
 (L_w)_{i,i+1}&=w(X_{i+2}+X_{i-1}),\\
 (L_w)_{i,i-1}&=X_{i+1}+X_{i-2},&
 (L_w)_{i,i-2}&=wX_{i-1}.
\end{aligned}
\end{equation}
The four positions are distinct since $n\ge5$.

\begin{theorem}\label{thm:division}
For all parameters in \eqref{eq:admissible}, $\Ps_{w,s}$ is a presemifield. Its split left determinant is
\begin{equation}\label{eq:leftdet}
 \widetilde D_{L,\Ps_{w,s}}(\Bf X)=\det L_w(\Bf X)
 =\lambda_n(w)\prod_{i=0}^{n-1}X_i,
 \qquad \lambda_n(w)=(1+w^n)^2(1-w^{2n})\ne0.
\end{equation}
More explicitly,
\begin{equation}\label{eq:lambda}
 \lambda_n(w)=
 \begin{cases}
 3(1+w),&n\equiv1\pmod6,\\
 3(2-w),&n\equiv5\pmod6.
 \end{cases}
\end{equation}
Its ordinary-coordinate left determinant $D_{L,\Ps_{w,s}}$ is $\lambda_n(w)\N_{F/K}$, with the norm understood as in \eqref{eq:polynomial-norm}.
\end{theorem}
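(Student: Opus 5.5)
The plan is to reduce the presemifield property to the determinant identity \eqref{eq:leftdet}. Then prove \eqref{eq:leftdet} in two stages: first show that $\det L_w(\Bf X)$ is a scalar multiple of the monomial $\prod_iX_i$, then find the scalar by specializing to $X_i=1$.

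\emph{Reduction.} Suppose \eqref{eq:leftdet} holds. Take $x\ne0$ and specialize $\Bf X=\Phi_{q^s}(x)$, i.e.\ use the $\sigma$-ordering of the conjugates. By \eqref{eq:actualpoints}, $L_w(\Phi(x))$ represents $y\mapsto x*_{w,s}y$ in split coordinates, and it is a Dickson matrix for this $K$-linear map. By Lemma~\ref{lem:moore}, invertibility of $L_x$ is equivalent to $\det L_w(\Phi(x))\ne0$. This determinant equals $\lambda_n(w)\N_{F/K}(x)$, which is nonzero as soon as $\lambda_n(w)\ne0$. So $x*y=0$ forces $x=0$ or $y=0$, and $\Ps_{w,s}$ is a presemifield. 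The ordinary-coordinate statement $D_{L}=\lambda_n(w)\N_{F/K}$ follows from \eqref{eq:ordinary-split-determinants} with $\Bf X=M\Bf U$, because $\prod_i(M\Bf U)_i$ is exactly the polynomial norm \eqref{eq:polynomial-norm}.

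\emph{Monomial shape.} $\det L_w$ is homogeneous of degree $n$ in $X_0,\dots,X_{n-1}$. It therefore suffices to show that it is divisible by each $X_j$, i.e.\ that $L_w$ is singular when $X_j=0$. By the cyclic symmetry $i\mapsto i+1$ of \eqref{eq:family-entries}, it is enough to treat $j=0$. I would look for an explicit kernel vector of $L_w|_{X_0=0}$, or of its transpose. Setting $X_0=0$ kills the entries $(L_w)_{-1,1}$, $(L_w)_{1,-1}$ (partially), $(L_w)_{-2,-1}$, $(L_w)_{1,2}$ (partially) and $(L_w)_{2,0}$ (partially), so column or row structure near index $0$ should degenerate. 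The relation $w^2-w+1=0$ will have to enter to produce the cancellation.

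This is the step I expect to be the main obstacle. If no clean kernel vector appears, the fallback is a combinatorial expansion. The nonzero terms of the determinant come from permutations $\pi$ with $\pi(i)-i\in\{\pm1,\pm2\}$ on $\mathbb Z/n$. For $\gcd(n,6)=1$ these are the constant shifts $\pm1,\pm2$ together with permutations built from local swaps and 3-cycles. One would then check, using $w^3=-1$ and $1-w+w^2=0$, that all contributions of non-monomial products cancel. A transfer-matrix formulation of this banded cyclic determinant would organize the bookkeeping.

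\emph{The constant.} With all $X_i=1$, the matrix $L_w$ is circulant with entries $1,2w,2,w$ at offsets $+2,+1,-1,-2$. Its determinant is therefore $\prod_{\zeta^n=1}\zeta^{-2}g(\zeta)$, where $g(\zeta)=\zeta^4+2w\zeta^3+2\zeta+w$. Using $w^3=-1$ one checks that $g(-w)=0$, and dividing out gives
\[
 g(\zeta)=(\zeta+w)^3(\zeta-w).
\]
For odd $n$ we have $\prod_\zeta\zeta^{-2}=1$, $\prod_\zeta(\zeta+w)=1+w^n$ and $\prod_\zeta(\zeta-w)=1-w^n$. Hence the constant is $(1+w^n)^3(1-w^n)=(1+w^n)^2(1-w^{2n})=\lambda_n(w)$.

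Finally, $w$ is $-\omega$ with $\omega$ a primitive cube root of unity, and $n\equiv\pm1\pmod 6$. So $w^n=w$ for $n\equiv1\pmod6$ and $w^n=w^{-1}=1-w$ for $n\equiv5\pmod6$. Substituting and reducing with $w^2=w-1$ gives \eqref{eq:lambda}. This value is nonzero because $p\ne3$ and $w\ne-1,2$; note $w=2$ would force $3=0$.
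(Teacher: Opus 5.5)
Your reduction (nonvanishing of $\det L_w(\Phi_\sigma(x))$ gives the division property, and $\prod_i(M_\sigma\Bf U)_i$ is the polynomial norm) is correct. So is the divisibility logic, since a degree-$n$ form divisible by every $X_j$ is $c\prod_iX_i$. Your constant is also right: $(\zeta+w)^3(\zeta-w)=\zeta^4+2w\zeta^3+2\zeta+w$ once $w^3=-1$ is used.

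\textbf{The gap.} The step that carries all the content is that $L_w$ is singular on $X_0=0$ for arbitrary $X_1,\ldots,X_{n-1}$. You never prove it; you only announce a search for a kernel vector.

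There is no local degeneration to exploit. No entry of row $0$ or column $0$ involves $X_0$. Only $(L_w)_{-1,1}=X_0$ and $(L_w)_{1,-1}=wX_0$ vanish, while $(L_w)_{1,2}$, $(L_w)_{2,1}$, $(L_w)_{-1,-2}$, $(L_w)_{-2,-1}$ merely lose their $X_0$-summand. Your list is partly off here: $(L_w)_{2,0}=wX_1$ is unaffected. The singularity comes from cancellation around the whole cycle.

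The fallback does not close the gap either. Permutations of $\mathbb Z/n$ with displacements in $\{\pm1,\pm2\}$ are not just shifts, swaps and $3$-cycles. They include arbitrarily long local cycles such as $0\to2\to4\to\cdots\to2k\to2k-1\to2k-3\to\cdots\to1\to0$. They also include winding $n$-cycles such as $0\to2\to1\to3\to4\to\cdots\to n-1\to0$. Even the shift by $+1$ alone contributes $\prod_iw(X_{i+2}+X_{i-1})$, which expands into $2^n$ terms. So ``check that all non-monomial contributions cancel'' restates the theorem rather than proving it.

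\textbf{The missing idea.} You need a structural identity that isolates the $\Bf X$-dependence of the determinant. The paper works over the Laurent ring. Put $\Delta=\diag(X_i)$, $H=\diag(X_{i-1}X_{i+1})$, $(\Pi u)_i=u_{i+1}$, and define the weighted shift $B=\diag(X_{i+2}/X_{i-1})\Pi$. Then
\[
 L_w(\Bf X)\Delta=H(B+wI)(I-w^2B^{-1}\Pi^{-1})(\Pi+wI),
\]
which is checked entrywise using $w^3=-1$. The weights of $B$ and of $B^{-1}\Pi^{-1}$ multiply to $1$, and the shift by $-2$ is still an $n$-cycle because $n$ is odd. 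The weighted-cycle formula $\det(zI-\diag(t_i)\Pi^a)=z^n-\prod_it_i$ then gives the constant determinants $1+w^n$, $1-w^{2n}$ and $1+w^n$ for the three inner factors. All dependence on $\Bf X$ therefore sits in $\det H/\det\Delta=\prod_iX_i$. This yields the monomial shape and $\lambda_n(w)$ together, with no divisibility argument.

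Your circulant computation is exactly the specialization $X_i=1$ of this identity: there $B=\Pi$ and the right side collapses to $\Pi^{-2}(\Pi+wI)^3(\Pi-wI)$. The real proof consists in deforming one copy of $\Pi$ into the weighted shift $B$.

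Writing your constant in this form also repairs a small point. The paper allows $p\mid n$, where the $n$th roots of unity are not distinct and the circulant cannot be diagonalized. By contrast, $\det(\Pi\pm wI)=1\pm w^n$ and $\det\Pi=1$ hold in every characteristic for odd $n$.
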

\begin{proof}
We prove a matrix factorization that is uniform in $n$. Write $\Pi$ for the cyclic permutation matrix with $(\Pi u)_i=u_{i+1}$. If $T=\diag(t_i)\Pi^a$ with $\gcd(a,n)=1$, then
\begin{equation}\label{eq:weighted-cycle}
 \det(zI-T)=z^n-\prod_i t_i.
\end{equation}
Indeed, choosing a shifted entry in the determinant expansion forces the same choice around the full $n$-cycle. Only the identity and the full cycle contribute. This argument is valid even when the characteristic divides $n$.

In $K(X_0,\ldots,X_{n-1})$, set
\[
 \Delta=\diag(X_i),\qquad H=\diag(X_{i-1}X_{i+1}),\qquad
 B=\diag\left(\frac{X_{i+2}}{X_{i-1}}\right)\Pi.
\]
We claim that
\begin{equation}\label{eq:factorization}
 L_w(\Bf X)\Delta
 =H(B+wI)(I-w^2B^{-1}\Pi^{-1})(\Pi+wI).
\end{equation}
The inverse weighted shift satisfies
\[
 (B^{-1}u)_i=\frac{X_{i-2}}{X_{i+1}}u_{i-1}.
\]
Using $w^3=-1$, expansion of the last three factors in \eqref{eq:factorization} gives
\begin{align*}
 &(B+wI)(I-w^2B^{-1}\Pi^{-1})(\Pi+wI)\\
 &\hspace{8mm}=B\Pi+wB+w\Pi+\Pi^{-1}+B^{-1}+wB^{-1}\Pi^{-1}.
\end{align*}
The entries of this matrix in columns $i+2,i+1,i-1,i-2$ of row $i$ are
\[
 \frac{X_{i+2}}{X_{i-1}},\quad
 w\left(\frac{X_{i+2}}{X_{i-1}}+1\right),\quad
 1+\frac{X_{i-2}}{X_{i+1}},\quad
 w\frac{X_{i-2}}{X_{i+1}},
\]
respectively. Multiplication by $H$ gives exactly the corresponding entries of $L_w\Delta$ in \eqref{eq:family-entries}, proving the claim.

The product of the weights of $B$ is one, and $n$ is odd. Thus \eqref{eq:weighted-cycle} gives
\[
 \det(B+wI)=1+w^n=\det(\Pi+wI).
\]
Also
\[
 (B^{-1}\Pi^{-1}u)_i=\frac{X_{i-2}}{X_{i+1}}u_{i-2}.
\]
The shift by $-2$ is an $n$-cycle, and its weights have product one. Consequently,
\[
 \det(I-w^2B^{-1}\Pi^{-1})=1-w^{2n}.
\]
Since $\det\Delta=\prod_iX_i$ and $\det H=(\prod_iX_i)^2$, taking determinants in \eqref{eq:factorization} proves \eqref{eq:leftdet}. It is a polynomial identity, although it was obtained in a rational-function field. Equivalently, all the calculations take place in the Laurent-polynomial ring over $\mathbb Z[w]/(w^2-w+1)$; no restrictions on specialization arise from the temporary denominators.

As $n$ is coprime to $6$, $w^n$ is $w$ or $w^{-1}$. Using \eqref{eq:w-relations} gives \eqref{eq:lambda}. The scalar is nonzero: either $w=-1$ or $w=2$, together with \eqref{eq:parameter}, would force characteristic three.

For an actual element $x\in F$, put
\[
 \Phi_\sigma(x)=(x,x^\sigma,\ldots,x^{\sigma^{n-1}})^{\mathsf T}.
\]
The defining equations give
\begin{equation}\label{eq:actual-family}
 L_w(\Phi_\sigma(x))\Phi_\sigma(y)=\Phi_\sigma(x*_{w,s}y).
\end{equation}
For $x\ne0$, the determinant on the left is
\[
 \lambda_n(w)\prod_{i=0}^{n-1}x^{\sigma^i}
 =\lambda_n(w)\N_{F/K}(x)\ne0.
\]
Therefore $x*_{w,s}y=0$ implies $y=0$. The multiplication is $K$-bilinear and has no zero divisors, so $\Ps_{w,s}$ is a presemifield. The $\sigma$-Moore matrix is obtained from the usual Moore matrix by permuting its rows. Using \eqref{eq:ordinary-split-determinants} in this ordering identifies $\prod_iX_i$ with the ordinary norm form \eqref{eq:polynomial-norm}, proving the final assertion.
\end{proof}

The distinction between split and ordinary coordinates is essential. A nonzero actual tuple $\Phi_\sigma(x)$ never has a zero coordinate. A vector such as $e_0=(1,0,\ldots,0)$ is a legitimate split parameter, but does not represent a nonzero element of $F$ in conjugate coordinates. Ordinary field multiplication has the same distinction: its split pencil is diagonal, although its original algebra has no zero divisors. The ranks at such split points will be used for isotopy later in Section \ref{sec:invariant}.

\begin{remark}\label{rem:division}
	The determinant formula in the proof of Theorem \ref{thm:division} holds for every odd $n\ge5$. If $3\mid n$, its scalar is zero, so that case is excluded by this construction. The argument does not cover even $n$. No condition $p\nmid n$ was used; in particular examples with $p=n$ are permitted whenever $p\equiv1\pmod3$.
\end{remark}

\begin{lemma}\label{lem:cyclic}
The split right and trace determinants $\widetilde D_{R,\Ps_{w,s}}$ and $\widetilde D_{\mathrm{tr},\Ps_{w,s}}$ are also $\lambda_n(w)$ times their respective coordinate products. The ordinary right and trace forms are nonzero scalar multiples of the corresponding norm form, by \eqref{eq:ordinary-split-determinants} and \eqref{eq:polynomial-norm}. The split trace tensor $T_w:=\widetilde T_{\Ps_{w,s}}$ satisfies
\begin{equation}\label{eq:cyclic-tensor}
 T_w(X,Y,Z)=T_w(Y,Z,X)=T_w(Z,X,Y).
\end{equation}
Moreover,
\begin{equation}\label{eq:opposite}
 f_{w,s}(y,x)=w f_{w^{-1},s}(x,y),\qquad
 f_{w,-s}(x,y)=w f_{w^{-1},s}(x,y).
\end{equation}
\end{lemma}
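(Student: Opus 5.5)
The plan is to prove the cyclic identity \eqref{eq:cyclic-tensor} first and derive both determinant statements from it by transposition, so that Theorem~\ref{thm:division} does all the determinant work. The relations \eqref{eq:opposite} will then be checked directly on monomials.

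\emph{Cyclic symmetry.} By \eqref{eq:family-split},
\[
 T_w(\Bf X,\Bf Y,\Bf Z)=\sum_{i}Z_if_i(\Bf X,\Bf Y).
\]
Record each monomial $Z_iX_aY_b$ by its offsets $(a-i,b-i)$ modulo $n$, which are the offsets of $X$ and $Y$ relative to $Z$.
\begin{itemize}[label=--]
 \item The unweighted monomials have offsets $(1,2)$, $(1,-1)$ and $(-2,-1)$.
 \item The $w$-weighted monomials have offsets $(2,1)$, $(-1,1)$ and $(-1,-2)$.
\end{itemize}
Replacing $(\Bf X,\Bf Y,\Bf Z)$ by $(\Bf Y,\Bf Z,\Bf X)$ sends a monomial with offsets $(a,b)$ to one with offsets $(-b,a-b)$ after reindexing the summation variable. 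This map permutes each of the two triples cyclically. For example, $(1,2)\mapsto(-2,-1)\mapsto(1,-1)\mapsto(1,2)$, and similarly $(2,1)\mapsto(-1,1)\mapsto(-1,-2)\mapsto(2,1)$. Hence $T_w(\Bf X,\Bf Y,\Bf Z)=T_w(\Bf Y,\Bf Z,\Bf X)$, and applying this twice gives the full identity \eqref{eq:cyclic-tensor}.

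\emph{Determinants.} By \eqref{eq:three-matrices},
\[
 \Bf Z^{\mathsf T}R_w(\Bf Y)\Bf X=T_w(\Bf X,\Bf Y,\Bf Z)=T_w(\Bf Y,\Bf Z,\Bf X)=\Bf X^{\mathsf T}L_w(\Bf Y)\Bf Z,
\]
so $R_w(\Bf Y)=L_w(\Bf Y)^{\mathsf T}$. In the same way,
\[
 \Bf X^{\mathsf T}M^{\mathrm{tr}}(\Bf Z)\Bf Y=T_w(\Bf Z,\Bf X,\Bf Y)=\Bf Y^{\mathsf T}L_w(\Bf Z)\Bf X,
\]
so $M^{\mathrm{tr}}(\Bf Z)=L_w(\Bf Z)^{\mathsf T}$. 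Theorem~\ref{thm:division} then gives $\lambda_n(w)\prod_iY_i$ and $\lambda_n(w)\prod_iZ_i$ for the two split determinants. The ordinary statements follow from \eqref{eq:ordinary-split-determinants} in the $\sigma$-ordering, exactly as in the last step of the proof of Theorem~\ref{thm:division}. For the trace form one also needs the factor $(\det M)^2$, which lies in $K^\times$; so the trace form is a nonzero scalar multiple of the norm form \eqref{eq:polynomial-norm}.

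\emph{The relations \eqref{eq:opposite}.} These are a direct comparison of monomials in \eqref{eq:construction}.
\begin{itemize}[label=--]
 \item Swapping $x$ and $y$ turns the unweighted line of $f_{w,s}$ into $x^{\sigma^2}y^\sigma+x^{\sigma^{-1}}y^{\sigma}+x^{\sigma^{-1}}y^{\sigma^{-2}}$. This is the monomial set of the weighted line.
 \item Conversely, the swapped weighted line becomes $w$ times the original unweighted monomials.
 \item Hence $f_{w,s}(y,x)$ has the old weighted monomials with coefficient $1$ and the old unweighted monomials with coefficient $w$, which is precisely $wf_{w^{-1},s}(x,y)$.
 \item Replacing $s$ by $-s$, that is $\sigma$ by $\sigma^{-1}$, sends the unweighted set $\{(1,2),(1,-1),(-2,-1)\}$ of exponent pairs to $\{(-1,-2),(-1,1),(2,1)\}$. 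This is the weighted set, and the weighted set is sent back to the unweighted one. The same coefficient comparison therefore gives $f_{w,-s}=wf_{w^{-1},s}$.
\end{itemize}

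The only step needing care is the bookkeeping in the cyclic symmetry: the direction of the index shift and the transposition conventions in \eqref{eq:three-matrices}. I would verify these on one monomial explicitly, namely $X_{r}Y_{r+1}Z_{r+2}$, which equals the term $Z_iX_{i-2}Y_{i-1}$ of $T_w$, before stating the general offset map.
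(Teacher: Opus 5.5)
Your proposal is correct. Your offset map $(a,b)\mapsto(-b,a-b)$ is a relabelling of the paper's observation that the coefficient-one terms of $T_w$ are the three cyclic orientations of a consecutive triple and the $w$-terms are the reverse ones. The trace determinant via $M^{\mathrm{tr}}(\Bf Z)=L_w(\Bf Z)^{\mathsf T}$ and your check of \eqref{eq:opposite} also follow the paper.

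The one real difference is the right determinant. The paper reads off the first relation in \eqref{eq:opposite} in split form:
\[
R_{\Ps_{w,s}}(\Bf Y)=wL_{w^{-1}}(\Bf Y).
\]
It then applies Theorem~\ref{thm:division} to the other root $w^{-1}$ and verifies $w^n\lambda_n(w^{-1})=\lambda_n(w)$ using $w^{3n}=-1$. You instead obtain $R_{\Ps_{w,s}}(\Bf Y)=L_w(\Bf Y)^{\mathsf T}$ from the cyclic symmetry. So both determinants come from one transposition argument, and no scalar computation is needed.

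Combining the two routes gives $L_w(\Bf Y)^{\mathsf T}=wL_{w^{-1}}(\Bf Y)$, so the paper's scalar identity is forced by your argument. The paper's route, for its part, records the relation $R_{\Ps_{w,s}}=wL_{w^{-1}}$, which it reuses later as $\mathscr C(\Ps_{w,s})=m_w\mathscr C(\Ps^d_{w^{-1},s})$ in the nucleus computation.
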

\begin{proof}
The first identity in \eqref{eq:opposite} follows by interchanging the two groups of three terms in \eqref{eq:construction}; the second follows by replacing $\sigma$ by $\sigma^{-1}$. Thus the split right determinant is
\[
 \widetilde D_{R,\Ps_{w,s}}(\Bf Y)=w^n\lambda_n(w^{-1})\prod_iY_i=\lambda_n(w)\prod_iY_i,
\]
where the last equality uses $w^{3n}=-1$. In $T_w=\sum_iZ_if_i$, the coefficient-one terms are the three cyclic orientations of the consecutive triple $(i,i+1,i+2)$, and the coefficient-$w$ terms have the reverse orientation. Reindexing proves \eqref{eq:cyclic-tensor}. In particular,
\begin{equation}\label{eq:trace-left}
 M^{\mathrm{tr}}_{\Ps_{w,s}}(Z)=L_w(Z)^{\mathsf T},
\end{equation}
which proves the remaining determinant identity.
\end{proof}

\begin{corollary}\label{cor:knuth}
Every member of the Knuth orbit of $\Ps_{w,s}$ is isotopic to either $\Ps_{w,s}$ or $\Ps_{w^{-1},s}$. Equivalently, the two representatives can be taken as $\Ps_{w,s}$ and $\Ps_{w,-s}$.
\end{corollary}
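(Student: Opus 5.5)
The plan is to use the standard description of the Knuth orbit as the six permutations of the trace trilinear form $T_{\Ps}(x,y,z)=\Tr_{F/K}(z(x*y))$ (cf.\ \cite{Knuth,LavrauwTensors}). Each member of the orbit is, up to isotopy, the presemifield whose trace tensor is $T_{\Ps}$ with its three arguments permuted by some $\pi\in S_3$. The cyclic permutations preserve $T_{\Ps}$ up to isotopy, and the transpositions all reduce to the opposite (dual) presemifield. So the orbit reduces to $\Ps_{w,s}$ and its dual.

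First, I will pass from the split identity \eqref{eq:cyclic-tensor} to the ordinary trace form. The split coordinates $\Bf X,\Bf Y,\Bf Z$ are obtained from ordinary ones by the same Moore matrix $M$. Hence $T_w(\Bf X,\Bf Y,\Bf Z)=T_w(\Bf Y,\Bf Z,\Bf X)$ translates into the identity
\[
\Tr\bigl(z(x*y)\bigr)=\Tr\bigl(x(y*z)\bigr)\qquad(x,y,z\in F).
\]
One can also verify this directly from \eqref{eq:construction}, by applying powers of $\sigma$ inside the trace. In either form, $T_{\Ps_{w,s}}$ is invariant under the $3$-cycles of its arguments. Therefore the Knuth operations given by $A_3$ send $\Ps_{w,s}$ to presemifields with the same trace tensor, and these are isotopic to $\Ps_{w,s}$; indeed the identity maps suffice after the trace identification.

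Second, I will treat the transpositions. Any transposition equals a fixed transposition composed with a $3$-cycle. So every odd permutation of $T_{\Ps_{w,s}}$ coincides with the tensor obtained by swapping $x$ and $y$, namely that of the opposite multiplication $x*^dy=y*x$. By \eqref{eq:opposite}, $y*_{w,s}x=w\,(x*_{w^{-1},s}y)$. Since $w\in K^\times$, the triple $(\id,\id,w^{-1}\id)$ is a $K$-linear isotopism from the opposite presemifield to $\Ps_{w^{-1},s}$. Here $w^{-1}$ also satisfies \eqref{eq:parameter}, so $\Ps_{w^{-1},s}$ is a member of the family.

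Finally, the second identity in \eqref{eq:opposite} gives $\Ps_{w,-s}\sim_K\Ps_{w^{-1},s}$ via the same scalar isotopism, which yields the equivalent choice of representatives. The main point requiring care is the first step: making sure that the trace-form permutation correctly realises the Knuth operations with the conventions used here, including the transpose/adjoint identifications of \cite{Knuth,LavrauwTensors}. I would settle this by checking explicitly that a cyclic shift of $T$ corresponds to the presemifield $x\circ y$ defined by $\Tr(z(x\circ y))=T(y,z,x)$. The cyclic identity then gives $x\circ y=x*y$ exactly.
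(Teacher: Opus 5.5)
Your proposal is correct and follows essentially the paper's argument. The cyclic invariance \eqref{eq:cyclic-tensor} of the trace tensor collapses the even Knuth operations, every transposition yields the opposite multiplication, and \eqref{eq:opposite} identifies both the opposite of $\Ps_{w,s}$ and $\Ps_{w,-s}$ with $\Ps_{w^{-1},s}$ via the scalar output map $w^{-1}$; the paper additionally remarks that these identities persist after composing with $\Tr_{K/\F_p}$, so the conclusion does not depend on whether the Knuth operations are taken over $K$ or over the prime field.
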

\begin{proof}
The tensor is unchanged by the cyclic permutations of its three variables, and a transposition gives the opposite multiplication. Apply \eqref{eq:opposite}. The same statements hold for the absolute trace after composition with $\Tr_{K/\F_p}$, so they hold for isotopy.
\end{proof}

\section{Splitting Fields and Determinant-Vertex Ranks}\label{sec:invariant}

We introduce invariants of presemifields under isotopy and compute them for several classical families. The three determinant forms record the possible singular multiplication maps after extension of scalars. For an intrinsic comparison we use the full centre; auxiliary calculations over smaller central subfields are related by restriction of scalars. Their irreducible components determine fields of definition and, when those components are hyperplanes, a distinguished set of projective points at which matrix ranks can be evaluated. The fields and component degrees describe the factorization, while the vertex rank records information about its matrix representation. In Section~\ref{sec:structure} these invariants will distinguish our construction from fields and Albert's generalized twisted fields, including cases in which the nuclear orders coincide.

\subsection{Component splitting fields and determinant vertices}


We first work with $\Ps=(\mathbb P,+,*)$ whose multiplication is
$K$-bilinear, where $K=\F_h$ and $\dim_K\mathbb P=n$. The following
lemma transfers the split-coordinate calculations to the ordinary
$K$-coordinates in which the invariants are defined.

\begin{lemma}\label{lem:coordinate-transfer}
Identify the additive space of $\Ps$ with $F=\F_{h^n}$, choose an
ordinary $K$-basis $\mathcal B$, and let $M$ be its Moore matrix.
Fix $\mu\in\{L,R,\mathrm{tr}\}$. Over any field containing $F$,
substitution by $M$ gives a correspondence between the factorizations
of $\widetilde D_{\mu,\Ps}$ and $D_{\mu,\Ps}$, preserving
irreducibility, factor degrees and multiplicities. More precisely,
\begin{equation}\label{eq:factor-coordinate-transfer}
 \begin{aligned}
 \widetilde D_{\mu,\Ps}(\Bf X)
 &=a\prod_i Q_i(\Bf X)^{e_i}
 \quad\Longrightarrow\quad
 D_{\mu,\Ps}(\Bf U)
 =\varepsilon_\mu a\prod_i Q_i(M\Bf U)^{e_i},\\
 &\hspace{12mm}\varepsilon_L=\varepsilon_R=1,
 \qquad \varepsilon_{\mathrm{tr}}=(\det M)^2,
 \end{aligned}
\end{equation}
where $\Bf X$ and $\Bf U$ denote the split and ordinary parameter
vectors for the chosen determinant type.

Let $\tau$ act on coefficients in $\overline K$ by $a\mapsto a^h$
and fix the ordinary variables, and put $(\Pi\Bf X)_i=X_{i+1}$,
with indices modulo $n$. For $Q\in\overline K[\Bf X]$,
\begin{equation}\label{eq:split-factor-frobenius}
 \tau\bigl(Q(M\Bf U)\bigr)
 =Q^{(h)}(\Pi M\Bf U),
\end{equation}
where $Q^{(h)}$ is obtained by applying $a\mapsto a^h$ to the
coefficients and fixing the split variables.
\end{lemma}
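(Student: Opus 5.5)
The lemma has two parts, and I will prove them separately: the factorization transfer and the Frobenius formula.

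\emph{Factorization transfer.} Start from the polynomial identities \eqref{eq:ordinary-split-determinants}. These give $D_{\mu,\Ps}(\Bf U)=\varepsilon_\mu\widetilde D_{\mu,\Ps}(M\Bf U)$, with the stated $\varepsilon_\mu$. Let $E\supseteq F$ be any field. Since $M\in GL_n(F)\subseteq GL_n(E)$, the substitution $\Bf X\mapsto M\Bf U$ defines a graded $E$-algebra automorphism $\psi\colon E[\Bf X]\to E[\Bf U]$, with inverse given by $M^{-1}$. A ring isomorphism of this kind maps units to units and preserves irreducibility and degree, because the substitution is linear and homogeneous. Applying $\psi$ to a factorization $\widetilde D_{\mu,\Ps}=a\prod_iQ_i^{e_i}$ therefore gives \eqref{eq:factor-coordinate-transfer}. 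Uniqueness of factorization in the UFD $E[\Bf U]$, with $\psi^{-1}$ carrying the argument back, shows that the correspondence is bijective on associate classes and preserves multiplicities. For $\mu=\mathrm{tr}$ the constant $(\det M)^2$ lies in $K^\times$, so it only rescales the leading scalar.

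\emph{Frobenius formula.} This is a direct coefficient computation. Write $(M\Bf U)_i=\sum_jb_j^{h^i}U_j$ and $Q=\sum_\alpha c_\alpha\Bf X^\alpha$. The map $\tau$ is a ring endomorphism of $\overline K[\Bf U]$ that fixes each $U_j$ and raises coefficients to the $h$-th power. Hence
\[
\tau\bigl(Q(M\Bf U)\bigr)=\sum_\alpha c_\alpha^h\prod_i\Bigl(\sum_jb_j^{h^{i+1}}U_j\Bigr)^{\alpha_i}.
\]
Since $b_j\in F=\F_{h^n}$, we have $b_j^{h^{i+1}}=b_j^{h^{(i+1)\bmod n}}$. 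The inner linear form is therefore $(M\Bf U)_{i+1}=(\Pi M\Bf U)_i$, and the whole sum equals $Q^{(h)}(\Pi M\Bf U)$.

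The only point needing care is the wrap-around at $i=n-1$. It relies on $b_j^{h^n}=b_j$, that is, on $\mathcal B\subseteq F$. The step I expect to be the main obstacle is rather the bookkeeping in the transfer part: $\tau$ must fix the ordinary variables $U_j$ and not the split variables, since the $X_i$ are themselves $F$-linear combinations of the $U_j$ and so are moved by $\tau$. The substitution must also be carried out over a field containing $F$, so that $\psi$ is defined there.
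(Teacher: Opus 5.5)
Your proof is correct and follows the paper's argument: use the polynomial identities \eqref{eq:ordinary-split-determinants}, note that substitution by the invertible $M$ is a ring isomorphism over any field containing $F$, and obtain the Frobenius formula from the fact that $\tau$ shifts the rows of $M$, i.e.\ $\tau(M)=\Pi M$. The only difference is that the paper also re-derives the underlying matrix similarities \eqref{eq:ordinary-split-matrices} from Lemma~\ref{lem:moore}. You instead rely on the determinant identities already stated in Section~\ref{subsec:determinants}, which is legitimate for this lemma.
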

\begin{proof}
The matrix identities underlying
\eqref{eq:ordinary-split-determinants} are
\begin{equation}\label{eq:ordinary-split-matrices}
 \begin{aligned}
 L_{\Ps}(M\Bf U)&=M A_{L,\Ps}(\Bf U)M^{-1},\\
 R_{\Ps}(M\Bf V)&=M A_{R,\Ps}(\Bf V)M^{-1},\\
 A_{\mathrm{tr},\Ps}(\Bf W)
 &=M^{\mathsf T}M_{\Ps}^{\mathrm{tr}}(M\Bf W)M.
 \end{aligned}
\end{equation}
The first two hold at each ordinary coordinate basis vector by
Lemma~\ref{lem:moore}, and hence identically by linearity in the
parameters. The third follows by substituting
$\Bf X=M\Bf U$, $\Bf Y=M\Bf V$ and $\Bf Z=M\Bf W$ into
\eqref{eq:three-matrices}.

Since $M$ is invertible, substitution by $M$ is an isomorphism of
polynomial rings over any field containing $F$, with inverse given
by substitution by $M^{-1}$. Together with
\eqref{eq:ordinary-split-determinants}, this proves
\eqref{eq:factor-coordinate-transfer} and the assertions about
factorization. Finally, $\tau(M)=\Pi M$, which gives
\eqref{eq:split-factor-frobenius}.
\end{proof}

We now define the relative invariants; choosing the full centre of an isotopic semifield will give invariants without a prescribed scalar field.

\begin{definition}\label{def:splittingfield}
For a nonzero homogeneous polynomial $D\in K[U_0,\ldots,U_{n-1}]$, its \emph{component splitting field} $E_K(D)$ is the smallest extension of $K$ in a fixed algebraic closure $\overline K$ over which every absolutely irreducible factor is defined up to a nonzero scalar. Equivalently, normalize a nonzero coefficient of each factor to one and adjoin its coefficients. For presemifield $\Ps$, put
\[
 E_{\mu,K}(\Ps)=E_K(D_{\mu,\Ps}),\qquad
 \mu\in\{L,R,\mathrm{tr}\}.
\]
The labels $L,R,\mathrm{tr}$ refer to the determinant types defined in Section~\ref{subsec:determinants}.
\end{definition}

The component splitting fields are finite extensions. They refer to the ordinary determinant form $D_{\mu,\Ps}$ and its original $K$-structure; Lemma~\ref{lem:coordinate-transfer} permits their calculation from split factors.

\begin{proposition}\label{prop:splittingfield}
	Let $\Ps=(\mathbb P,+,*)$ be a finite presemifield whose multiplication is $K$-bilinear, where	$K=\F_h$ and $\dim_K\mathbb P=n$.
	The three component splitting fields $E_{\mu,K}(\Ps)$, $\mu\in\{L,R,\mathrm{tr}\}$,	are preserved by $K$-linear isotopy.
	
	Fix $\mu\in\{L,R,\mathrm{tr}\}$ and write
	\[
	D=D_{\mu,\Ps}\in K[U_0,\ldots,U_{n-1}]
	\]
	for the corresponding determinant form in ordinary	$K$-coordinates. Let $m$ be the number of its distinct	absolutely irreducible factors. Each factor has multiplicity one, all have a common degree $d$, and
	\begin{equation}\label{eq:componentdegrees}
		md=n,\qquad
		E_{\mu,K}(\Ps)=E_K(D)=\F_{h^m}.
	\end{equation}
	If $d=1$, the linear factors are independent and $E_{\mu,K}(\Ps)=\F_{h^n}$.
\end{proposition}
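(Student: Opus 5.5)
Invariance comes first. A $K$-linear isotopism $(A,B,C)$ from $\Ps$ to $\Qs$ satisfies $A_{L,\Qs}(A_0\Bf U)B_0=C_0A_{L,\Ps}(\Bf U)$, and the analogous identities hold for the right and trace matrices, as recorded at the end of Section~\ref{subsec:determinants}. Taking determinants gives $D_{\mu,\Qs}(G\Bf U)=c\,D_{\mu,\Ps}(\Bf U)$ for some $G\in GL_n(K)$ and $c\in K^\times$. An invertible $K$-linear substitution together with a nonzero scalar gives a bijection between absolutely irreducible factors, up to scalars. Because $G$ has entries in $K$, a factor is defined over an extension $E\supseteq K$ exactly when its transform is. Hence $E_K(D_{\mu,\Ps})=E_K(D_{\mu,\Qs})$.

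For the structure statement I would use Galois descent together with the fact that $D$ has no nonzero $K$-rational zero. Let $D=c\prod_j Q_j^{e_j}$ be the factorization into distinct absolutely irreducible factors over $\overline K$, with the $Q_j$ normalized. First I show that the $K$-Frobenius $\tau$ permutes the $Q_j$ transitively. Suppose not. Then some $\tau$-orbit product $P=\prod_{j\in O}Q_j$ is a proper factor of $D$ defined over $K$, and $D=PR$ with $R\in K[\Bf U]$ nonconstant. I would pass to the split picture via Lemma~\ref{lem:coordinate-transfer}. There $\widetilde D_\mu$ is the determinant of a linear pencil $\mathcal A(\Bf X)$, and $\tau$ acts on split factors by $Q\mapsto Q^{(h)}\circ\Pi$ as in \eqref{eq:split-factor-frobenius}.

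The key input is a standard result for norm-like forms: a degree-$n$ form in $n$ variables over $\F_h$ with no nontrivial zero is a scalar times the norm form of $\F_{h^n}/\F_h$, after a linear change of variables. One route to this is Chevalley--Warning applied to each irreducible $K$-factor. Each such factor also has no nontrivial zero, so its degree must be at least the number of variables, which is $n$. Hence there is exactly one $K$-irreducible factor, of degree $n$. The same argument shows there are no repeated factors, since $D=P^eR'$ with $e>1$ would make $P$ a form of degree less than $n$ with no zeros. So $D$ is $K$-irreducible. Its absolutely irreducible factors then form a single $\tau$-orbit $Q,\tau Q,\dots,\tau^{m-1}Q$, are distinct, and all have the common degree $d=n/m$. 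This gives $md=n$.

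The splitting field is then the field of definition of $Q$ (normalized). Its $\tau$-orbit has exact length $m$, so $Q$ is fixed by $\tau^m$ and by no smaller power, which gives $E_K(D)=\F_{h^m}$. When $d=1$ we have $m=n$ linear forms $\ell,\tau\ell,\dots,\tau^{n-1}\ell$ with coefficients in $\F_{h^n}$. They are independent: otherwise their common kernel is a nonzero $\tau$-stable subspace of $\overline K^n$, hence is defined over $K$ by Galois descent, and therefore contains a nonzero $K$-point at which $D$ vanishes, a contradiction. In this case $E=\F_{h^n}$. The step I regard as the main obstacle is the Chevalley--Warning step, which needs the degree bound $\deg P\ge$ (number of variables) for each $K$-factor. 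This uses the hypothesis that $D$ has no nonzero $K$-zero, as noted in Section~\ref{subsec:determinants}.
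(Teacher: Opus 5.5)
Your proof is correct, and its core is the paper's. You get invariance from $D_{\mu,\Qs}(G\Bf U)=c\,D_{\mu,\Ps}(\Bf U)$ with $G\in\mathrm{GL}_n(K)$. You use Chevalley--Warning to show that $D$ is irreducible over $K$. You then use the fact that a $\tau$-orbit product of normalized absolute factors lies in $K[\Bf U]$, which forces a single orbit, multiplicity one, $md=n$ and $E_K(D)=\F_{h^m}$.

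The one genuinely different step is the independence of the linear factors when $d=1$. You descend the common kernel of $\ell,\tau\ell,\dots,\tau^{n-1}\ell$, a nonzero Frobenius-stable subspace, to a nonzero $K$-point at which $D$ vanishes. The paper instead notes that $\Bf u\mapsto\ell(\Bf u)$ is injective on $K^n$, so the coefficients of $\ell$ form a $K$-basis of $\F_{h^n}$, and then applies the invertibility of the Moore matrix from Lemma~\ref{lem:moore}. Both arguments use only the absence of nonzero $K$-zeros; the paper's avoids descent of subspaces, and yours avoids the Moore matrix.

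You should, however, delete the sentence presenting as the ``key input'' a standard result that every degree-$n$ form in $n$ variables over $\F_h$ without nontrivial zeros is a scalar times the norm form of $\F_{h^n}/\F_h$ after a linear change of variables. That statement is false. For the Dickson semifield with $m=2$, the left determinant $N_x^2-AN_y^2$ is such a quartic in four variables. Yet its absolute components are the two quadrics $N_x\pm bN_y$ (Proposition~\ref{prop:dickson}), whereas a norm form would split into four linear factors. Proposition~\ref{prop:gtf}(ii) gives further examples, and the statement you are proving explicitly allows $d>1$. Your Chevalley--Warning argument proves only $K$-irreducibility, which is all you use afterwards, so nothing downstream depends on the false claim. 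The detour into split coordinates via Lemma~\ref{lem:coordinate-transfer} is likewise never used and can be dropped.
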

\begin{proof}
The ordinary-coordinate identities in Section~\ref{subsec:determinants} show that a $K$-linear isotopy changes each $D_{\mu,\Ps}$ by a nonzero scalar and an invertible $K$-linear substitution. A factor is defined over an extension $E/K$ before the substitution if and only if its image is defined over $E$, since the inverse substitution is also defined over $K$.

For the fixed determinant form $D=D_{\mu,\Ps}$, the matrix size and the number of parameter variables are both $n$. Hence $D$ is homogeneous of degree $n$, and the division property gives
\[
	D(\Bf a)\ne0 \qquad\text{for every }\Bf a\in K^n\setminus\{0\}.
\]
We first show that these properties force $D$ to be irreducible over $K$.  Indeed, suppose that $D=AB$ with $A,B$ nonconstant polynomials over $K$. Since $D$ is homogeneous, both factors are homogeneous, and $1\le\deg A<n$. By the Chevalley--Warning theorem
\cite{LN}, the number of zeros of $A$ in $K^n$ is divisible by $\operatorname{char}K$. The zero vector is a zero because $A$ is homogeneous of positive degree, so there must be another zero. This would be a nonzero zero of $D$, a contradiction.

Write the absolute factorization as
\[
D=c\prod_{i=1}^{r}F_i^{e_i},
\qquad e_i\ge1,
\]
where the $F_i$ are pairwise nonassociate absolutely irreducible polynomials. Fix a monomial order and
normalize each $F_i$ to have leading coefficient one. Then $c\in K^\times$. Let $\tau$ act on coefficients by $a\mapsto a^h$ and fix the variables. Since $\tau(D)=D$, unique factorization shows that $\tau$ permutes the normalized factors $F_i$.

Choose one orbit $\mathcal O$ and set
\[
H=\prod_{F_i\in\mathcal O}F_i,
\]
taking each factor in this orbit once. The polynomial $H$ is fixed by $\tau$, so it belongs to $K[U_0,\ldots,U_{n-1}]$. It divides $D$ over $\overline K$. Its quotient also belongs to $K[U_0,\ldots,U_{n-1}]$: if $D=HQ$, then
\[
H\tau(Q)=\tau(D)=D=HQ,
\]
and hence $\tau(Q)=Q$. Thus $H$ is a nonconstant divisor of $D$ over $K$. The irreducibility of $D$ forces $Q$ to be constant. Consequently, all the absolute factors lie in $\mathcal O$, and every
multiplicity $e_i$ equals one.

Let $m$ be the number of these factors. Frobenius acts on them as an $m$-cycle, and they all have the same degree $d$. Hence $md=n$.
For any normalized factor $F_i$, its coefficients belong to $\F_{h^a}$ if and only if $\tau^a(F_i)=F_i$, which holds precisely when
$m\mid a$. Its minimal field of definition is therefore $\F_{h^m}$. The same is true of every factor, so
\[
E_K(D)=\F_{h^m}.
\]
This proves \eqref{eq:componentdegrees}.

If the factors are linear, write one as $\ell(\Bf U)=\sum_i b_iU_i$ with $b_i\in\F_{h^n}$. The map $K^n\to\F_{h^n}$ given by $\ell$ is injective, since a nonzero vector in its kernel is a zero of $D$. Thus the $b_i$ form a $K$-basis of $\F_{h^n}$. By Lemma~\ref{lem:moore}, their conjugates give independent linear forms.
\end{proof}

The argument based on the Frobenius action on the absolute factors also underlies \cite[Proposition~16]{Menichetti96}.
In particular, the component splitting field alone cannot distinguish two $n$-dimensional presemifields whose determinant forms have only linear absolute factors: in both cases it is $\F_{h^n}$. In the calculations below, we transfer split factors by \eqref{eq:factor-coordinate-transfer} and use \eqref{eq:split-factor-frobenius} to determine their minimal fields of definition in the ordinary coordinates.

Suppose that
\begin{equation}\label{eq:simplexdet}
 D_{L,\Ps}(\Bf U)=\lambda\ell_0(\Bf U)\ell_1(\Bf U)\cdots\ell_{n-1}(\Bf U)
\end{equation}
over $E=E_{L,K}(\Ps)$, with independent linear factors. The hyperplanes $H_i=\{\ell_i=0\}$ determine the \emph{determinant vertices}
\[
 v_i=\bigcap_{j\ne i}H_j\in\PG(\mathbb P\otimes_KE).
\]
One can first record their ranks in the multiset
\[
 \nu_{L,K}(\Ps)
 =\multiset{\rank_E A_{L,\Ps}(v_i):0\le i<n}.
\]
The representative of $v_i$ in this expression is a nonzero vector in the ordinary coordinate system in which the forms $\ell_j$ are written; rescaling it does not change the rank.

For a presemifield this multiset has only one value. By Proposition~\ref{prop:splittingfield}, Frobenius acts transitively on the component hyperplanes and therefore also on their opposite vertices. The pencil, in its original coordinates, is defined over $K$. Applying Frobenius to its parameter and entries preserves matrix rank. Hence
\[
 \rank_E A_{L,\Ps}(v_0)=\cdots=\rank_E A_{L,\Ps}(v_{n-1}).
\]
The following definition records this common value.

\begin{definition}\label{def:vertex}
Let $d_{\mu,K}$ be the common degree of the absolute factors of $D_{\mu,\Ps}$, and write $d_\mu=d_{\mu,K}$ when the scalar field is clear from the context. The \emph{extended determinant-vertex rank} of type $\mu$ is
\begin{equation}\label{eq:vertexinvariant}
 \rho_{\mu,K}(\Ps)=
 \begin{cases}
  \rank_{E_{\mu,K}(\Ps)}A_{\mu,\Ps}(v),&d_\mu=1,\\
  \bot,&d_\mu>1,
 \end{cases}
 \qquad \mu\in\{L,R,\mathrm{tr}\},
\end{equation}
where $A_{\mu,\Ps}$ is the ordinary matrix family defined in Section~\ref{subsec:determinants}, and $v$ is any nonzero ordinary-coordinate representative of a determinant vertex of the indicated type. When $d_\mu=1$, we call the numerical value its \emph{determinant-vertex rank}. The symbol $\bot$ means that the absolute components are nonlinear, so no determinant-vertex configuration exists. For this specified scalar field, put
\begin{equation}\label{eq:relative-profile}
 \mathcal I_{\mu,K}(\Ps)
 =\bigl([E_{\mu,K}(\Ps):K],\ d_{\mu,K},\ \rho_{\mu,K}(\Ps)\bigr).
\end{equation}
We call this the \emph{relative determinant profile}. For an arbitrary presemifield $\Ps$, choose an isotopic semifield $\Ss$ and identify its centre with $K_0=\F_{|Z(\Ss)|}$. The \emph{central determinant profile} is
\begin{equation}\label{eq:totalinvariant}
 \mathcal I_{\mu}(\Ps)
 :=\mathcal I_{\mu,K_0}(\Ss),
 \qquad K_0\cong Z(\Ss),\quad \mu\in\{L,R,\mathrm{tr}\}.
\end{equation}
\end{definition}

For calculations in split coordinates, the representative must be transformed together with the matrix family. By \eqref{eq:ordinary-split-matrices},
\begin{equation}\label{eq:vertex-coordinate-ranks}
 \begin{aligned}
 \rank A_{L,\Ps}(v)&=\rank L_{\Ps}(Mv),\\
 \rank A_{R,\Ps}(v)&=\rank R_{\Ps}(Mv),\\
 \rank A_{\mathrm{tr},\Ps}(v)&=\rank M_{\Ps}^{\mathrm{tr}}(Mv).
 \end{aligned}
\end{equation}
These equalities can be checked over any extension containing the vertex coordinates and the Moore-matrix entries, since further scalar extension does not change rank. Thus when a split determinant is a coordinate product, evaluation at $e_i$ means evaluation at a split vertex; its ordinary representative is $M^{-1}e_i$. Throughout, $E_{\mu,K}$, $\rho_{\mu,K}$ and $\mathcal I_{\mu,K}$ refer to the original presemifield with its specified scalar field, even when their values are calculated in split coordinates.

In the definition above, $\Ss$ is viewed with its natural $K_0$-bilinear multiplication. If the displayed multiplication of $\Ps$ is already $K_0$-bilinear, its profile can be computed directly without passing to $\Ss$. Independence of the choices in \eqref{eq:totalinvariant} follows from Theorem~\ref{thm:invariance}.

The first two entries of a relative profile multiply to the dimension over its stated base field. For the central profile, their product is the dimension over the full centre. In the linear-component case the new information is a single matrix rank. When the components are nonlinear, extending to the component splitting field does not make them linear; $\bot$ is a defined alternative, not an omitted calculation. By using the full centre, \eqref{eq:totalinvariant} removes the arbitrary choice of a smaller common scalar field. Proposition~\ref{prop:profile-restriction} below shows that profiles over all central subfields can be recovered from this one and the subfield degree. No analogous replacement by the three possibly unequal nuclei is made: that would require different parameter spaces and determinant definitions.

\begin{theorem}\label{thm:invariance}
For a fixed $K$, the relative profiles \eqref{eq:relative-profile} are preserved by isotopy between two presemifields whose multiplications are $K$-bilinear. The central profiles \eqref{eq:totalinvariant} are independent of the isotopic semifield and scalar identifications used to define them, and are invariants under isotopy of arbitrary finite presemifields.
\end{theorem}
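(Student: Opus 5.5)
The proof has two parts. First comes relative invariance under $K$-linear isotopy. Then comes the reduction of arbitrary isotopy to the $K$-linear case, via Lemma~\ref{lem:semilinearity} and the Galois twist \eqref{eq:semilinear-reduction}.

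\textbf{Relative profiles.} Let $(A,B,C)$ be a $K$-linear isotopism from $\Ps$ to $\Qs$. The ordinary identities at the end of Section~\ref{subsec:determinants} read
\[
A_{L,\Qs}(A_0\Bf U)B_0=C_0A_{L,\Ps}(\Bf U),
\qquad
A_{R,\Qs}(B_0\Bf V)A_0=C_0A_{R,\Ps}(\Bf V),
\qquad
A_0^{\mathsf T}A_{\mathrm{tr},\Qs}(C_\vee\Bf W)B_0=A_{\mathrm{tr},\Ps}(\Bf W).
\]
Each has the form $P\,A_{\mu,\Qs}(S\Bf U)\,Q=A_{\mu,\Ps}(\Bf U)$ with $P,Q,S\in GL_n(K)$. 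Taking determinants gives $D_{\mu,\Ps}(\Bf U)=c\,D_{\mu,\Qs}(S\Bf U)$ with $c\in K^\times$. Proposition~\ref{prop:splittingfield} already shows that $E_{\mu,K}$ is preserved. The substitution $S$ is a degree-preserving ring automorphism defined over $K$, so it carries absolute factors to absolute factors, and hence $d_{\mu,K}$ is preserved. In the linear case, the factors $\ell_i$ of $D_{\mu,\Qs}$ pull back to the factors $\ell_i\circ S$ of $D_{\mu,\Ps}$. Therefore the vertices correspond by $v\mapsto Sv$, after extending to $E$. The matrix identity then gives
\[
\rank A_{\mu,\Ps}(v)=\rank A_{\mu,\Qs}(Sv),
\]
because $P$ and $Q$ are invertible. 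So $\rho_{\mu,K}$ is preserved.

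\textbf{Semilinear isotopisms.} Suppose next that both multiplications are $K$-bilinear but the isotopism is only additive. By Lemma~\ref{lem:semilinearity} and the remark after it, all three maps are $\theta$-semilinear for a common $\theta$. By \eqref{eq:semilinear-reduction} it suffices to show that $\Ps^{[\theta]}$ has the same profile as $\Ps$. Choose the twist $\phi_\theta$ to act coordinatewise in a fixed basis $\mathcal B$ by $\theta$. Then the structure constants of $*^{[\theta]}$ are the $\theta$-images of those of $*$, so
\[
A_{\mu,\Ps^{[\theta]}}=\theta\bigl(A_{\mu,\Ps}\bigr),
\]
where $\theta$ acts on coefficients. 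Extend $\theta$ to an automorphism of $\overline K$. This extension maps the absolute factorization, the fields of definition (a degree-preserving bijection onto a conjugate field, which for finite fields equals the same field), the vertices and the ranks to those of the twisted form. So the profile is unchanged. I expect this bookkeeping step to be the main point requiring care: the extension of $\theta$ must be compatible with the choice of vertex representatives.

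\textbf{Central profiles.} Let $\Ss,\Ss'$ be semifields isotopic to $\Ps$, with centres $K_0,K_0'$. They are isotopic to each other, so $|K_0|=|K_0'|$, and the two centres are identified by some field isomorphism. The main obstacle is to show that an isotopism between them can be taken semilinear with respect to the centres. I would use Proposition~\ref{prop:spread-nuclei}: the centre is the field $\mathscr Z_\omega(\mathscr C)$ of maps commuting appropriately with the spread set. By Proposition~\ref{prop:spread-isotopy}, an isotopism transports the spread set, and so conjugates $\mathscr Z_\omega(\mathscr C(\Ss))$ onto $\mathscr Z_{\omega'}(\mathscr C(\Ss'))$ for a suitable $\omega'$. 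Consequently, after identifying each additive group with a field containing its centre, both spread sets lie in $\End_{K_0}$. Lemma~\ref{lem:semilinearity} then applies with $K=K_0$. The previous two steps show that the relative $K_0$-profiles agree. Any two identifications of the centre with $\F_{|Z|}$ differ by a Galois automorphism, and the twist argument shows this choice is harmless. This proves independence of the choices and invariance under arbitrary isotopy.
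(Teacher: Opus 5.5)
Your proposal is correct and follows the paper's own three-step proof: the $K$-linear case uses the three ordinary matrix identities, the general isotopism is reduced by Lemma~\ref{lem:semilinearity} and the Galois twist \eqref{eq:semilinear-reduction} with $\theta$ extended to $\overline K$, and the central profiles are handled by identifying both centres with $K_0$ and applying the relative result. Two small remarks: the detour through the transport of $\mathscr Z_\omega$ is unnecessary, since centrality alone makes each semifield's multiplication $K_0$-bilinear once its centre is identified with $K_0$, which is all Lemma~\ref{lem:semilinearity} needs; and for $\mu=\mathrm{tr}$ a coordinatewise twist gives only $A_{\mathrm{tr},\Ps^{[\theta]}}(\Bf W)=\theta(A_{\mathrm{tr},\Ps})\bigl(\theta(G)^{-1}G\Bf W\bigr)$, where $G$ is the Gram matrix of the trace pairing in your basis, but this is an invertible $K$-linear substitution and so leaves the profile unchanged.
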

\begin{proof}
For a $K$-linear isotopism $(A,B,C)$ from $\Ps$ to $\Qs$, Proposition~\ref{prop:splittingfield} preserves the fields and component degrees. Let $A_0,B_0,C_0$ be its ordinary coordinate matrices. The ordinary identity in Section~\ref{subsec:determinants} is
\[
 A_{L,\Qs}(A_0\Bf U)B_0=C_0A_{L,\Ps}(\Bf U).
\]
Its determinants show that $A_0$ maps the ordinary left component hyperplanes and their vertices to those for $\Qs$. At an ordinary vertex representative $v$, the same identity gives
\[
 \rank A_{L,\Qs}(A_0v)=\rank A_{L,\Ps}(v),
\]
because $B_0,C_0$ are invertible. Interchanging the two multiplication variables proves the right assertion. For the trace type, the invertible $K$-matrix $C_\vee$ defined in Section~\ref{subsec:determinants} maps the ordinary trace vertices, and
\[
 A_0^{\mathsf T}A_{\mathrm{tr},\Qs}(C_\vee v)B_0
 =A_{\mathrm{tr},\Ps}(v).
\]
This proves the trace rank assertion without assuming self-dual ordinary bases.

For a general isotopism between $K$-bilinear presemifields, Lemma~\ref{lem:semilinearity} and its dual give a common automorphism of $K$. It extends to every finite extension, preserves absolute factor degrees and matrix ranks, and maps each component field onto the unique extension of the same degree. Apply \eqref{eq:semilinear-reduction} and the $K$-linear assertion. Finally, any two semifields isotopic to the same presemifield have isomorphic centres. Identify both with $K_0$; their multiplications are $K_0$-bilinear. The relative assertion just proved gives equality of their central profiles, including independence of the chosen field identifications.
\end{proof}

\begin{proposition}[Restriction of the scalar field]\label{prop:profile-restriction}
Let $k\subseteq K$ be finite fields, with $[K:k]=a$, and let $\Ps$ have $K$-bilinear multiplication. If
\[
 \mathcal I_{\mu,K}(\Ps)=(m,d,\rho),
\]
then
\begin{equation}\label{eq:profile-restriction}
 \mathcal I_{\mu,k}(\Ps)=(am,d,\rho),\qquad
 E_{\mu,k}(\Ps)=E_{\mu,K}(\Ps)
\end{equation}
as subfields of a common algebraic closure. Here $\rho=\bot$ remains $\bot$.
\end{proposition}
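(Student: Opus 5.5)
The natural route is restriction of scalars. Regarded over $k$, the space $\mathbb P$ has dimension $an$ and the $k$-bilinear determinant form $D_{\mu,\Ps}^{(k)}$ has $an$ variables. I will relate it to the $K$-form by a norm. Take the $k$-basis $\{c_tb_j\}$, where $(c_t)$ is a $k$-basis of $K$ and $(b_j)$ is a $K$-basis of $\mathbb P$. A $K$-linear map of $K$-rank $r$ has $k$-rank $ar$. In matrix language, the $k$-matrix family $A^{(k)}_{\mu}(\Bf U')$ becomes, after extension of scalars to $K$ and the standard decomposition $K\otimes_kK\cong K^a$, block diagonal up to conjugation by a constant invertible matrix. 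Its blocks are the Galois conjugates $A_\mu^{\gamma}(\Bf U^{(\gamma)})$, $\gamma\in\Gal(K/k)$, where the $\Bf U^{(\gamma)}$ are $a$ independent blocks of new linear coordinates, obtained by a $K$-linear change of the $an$ ordinary $k$-variables. The first step is to establish this block form, analogous to the Moore-matrix identity \eqref{eq:ordinary-split-matrices} of Lemma~\ref{lem:coordinate-transfer}. It gives
\[
 D^{(k)}_{\mu}=c\prod_{\gamma}D_{\mu,\Ps}^{\gamma}(\Bf U^{(\gamma)}),\qquad c\ne0.
\]
For the trace type the trace $\Tr_{F/k}=\Tr_{K/k}\circ\Tr_{F/K}$ splits the same way, with a Gram scalar.

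Next I read off factorizations. The absolute factors of $D^{(k)}_\mu$ are the conjugates $F_i^{\gamma}(\Bf U^{(\gamma)})$ of the $m$ absolute factors $F_i$ of $D_{\mu,\Ps}$. These are placed in disjoint variable blocks, so they stay pairwise nonassociate and absolutely irreducible of degree $d$. This gives $am$ factors of degree $d$, consistent with \eqref{eq:componentdegrees} for dimension $an$. Each factor is defined over the field generated over $K$ by the coefficients of $F_i$, because the change of variables is defined over $K$. By Proposition~\ref{prop:splittingfield} applied over $k$, that field is $\F_{|k|^{am}}=\F_{h^m}$, where $h=|K|$. It contains $K$, and it equals $E_{\mu,K}(\Ps)$. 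This proves the first two entries of \eqref{eq:profile-restriction} and the equality of fields. If $d>1$ then $\rho=\bot$ on both sides.

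For $d=1$, I pick a $k$-vertex, namely the point where all linear factors but one vanish. Say the nonvanishing factor is $\ell_0$ in block $\gamma=1$. Then the coordinates $\Bf U^{(\gamma)}$ vanish for $\gamma\ne1$, and $\Bf U^{(1)}$ is the vertex $v_0$ of $D_{\mu,\Ps}$. In the block form the matrix is $\diag(A_\mu(v_0),0,\dots,0)$, so its rank is $\rho$. The step I expect to be the main obstacle is checking that the conjugation identity holds for the pencil itself and not only for its determinant. I would verify it at basis vectors and extend by linearity, as in Lemma~\ref{lem:coordinate-transfer}. I also need to confirm that the vanishing of the other blocks, rather than their conjugates, is what makes those blocks zero, so that no rank is lost or gained.
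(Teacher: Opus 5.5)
Your proposal is correct and follows essentially the same route as the paper. The paper also extends scalars, to an algebraic closure $\Omega$ of $k$ rather than to $K$; this makes no difference, since $K\otimes_k K\cong K^a$ already splits. It then decomposes each pencil into $a$ Galois-conjugate blocks in disjoint variables, with the trace pencil handled via $\Tr_{F/k}=\Tr_{K/k}\circ\Tr_{F/K}$ and a congruence. It counts $am$ absolute factors of degree $d$, applies Proposition~\ref{prop:splittingfield} over both base fields to obtain $\F_{h^m}$ with $h=|K|$, and reads off the vertex rank from a block-diagonal matrix with a single nonzero block.

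One small point of phrasing: your observation that each factor is defined over the coefficient field of $F_i$ only gives the inclusion $E_{\mu,k}\subseteq E_{\mu,K}$. Equality comes from the factor count via Proposition~\ref{prop:splittingfield} over $k$, which you do also invoke.
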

\begin{proof}
Put $n=\dim_K\mathbb P$, and extend scalars to an algebraic closure $\Omega$ of $k$. Since finite-field extensions are separable,
\[
 K\otimes_k\Omega\cong\prod_{j=0}^{a-1}\Omega,
 \qquad u\otimes v\longmapsto(\tau_j(u)v)_j,
\]
where the $\tau_j$ are the $k$-embeddings of $K$ into $\Omega$. Accordingly, the scalar extension of the underlying $k$-space is the direct sum of $a$ spaces of dimension $n$, one for each $\tau_j$. The $K$-bilinearity of multiplication makes products between different summands zero, and the product on summand $j$ is obtained by applying $\tau_j$ to the original structure constants. For the trace form, identify the additive space over $K$ with $F=\F_{|K|^n}$ as in Section~\ref{subsec:determinants}. The same decomposition then follows from
\[
 \Tr_{F/k}(z(x*y))
 =\Tr_{K/k}\!\left(\Tr_{F/K}(z(x*y))\right).
\]
Thus after compatible coordinate changes over $\Omega$, each ordinary multiplication pencil over $k$ is block diagonal with $a$ conjugate $n\times n$ blocks; the trace pencil has this form after congruence. Its determinant may acquire a nonzero scalar, which does not affect factorization or ranks. The variable sets of the blocks are disjoint.

Each block determinant has $m$ distinct absolutely irreducible factors of degree $d$. Factors belonging to different blocks cannot be proportional because they involve disjoint variable sets. There are consequently $am$ absolute factors of the $k$-determinant, all of degree $d$. If $|k|=h$, Proposition~\ref{prop:splittingfield} gives the component field $\F_{h^{am}}$ over either base, since $|K|=h^a$.

If $d=1$, a determinant vertex of the full block matrix has a nonzero parameter only in one block. In that block it is a determinant vertex for the original pencil, and every other block is zero. Its matrix rank is therefore $\rho$. If $d>1$, there are no determinant vertices over either base. This proves \eqref{eq:profile-restriction}.
\end{proof}

For example, a field has central profile $(1,1,1)$. Its relative profile as an $n$-dimensional algebra over a proper subfield is $(n,1,1)$, recovered by the proposition. The formulas below retain the base fields used to write the classical multiplications; when that field is smaller than the centre, they are relative profiles in the sense of \eqref{eq:relative-profile}.

\begin{lemma}\label{lem:monomial}
Let $\Ps=(\mathbb P,+,*)$ and $\Qs=(\mathbb Q,+,\circ)$ be presemifields of dimension $n$ with $K$-bilinear multiplications. Suppose that in the specified split coordinates over a common extension $E/K$, each polynomial $\widetilde D_{\mu,\Ps}$ and $\widetilde D_{\mu,\Qs}$, $\mu\in\{L,R,\mathrm{tr}\}$, is a nonzero scalar multiple of the product of the $n$ parameter coordinates. For every $K$-linear isotopism $(A,B,C)$ from $\Ps$ to $\Qs$, the matrices of $A,B,C$ in these split coordinates are monomial.
\end{lemma}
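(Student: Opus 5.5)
The plan is to use the split-coordinate identities of Section~\ref{subsec:determinants}, rewritten as polynomial identities of determinants. A linear substitution that carries a coordinate product to a scalar multiple of a coordinate product must permute the coordinate hyperplanes, and so it is monomial. This will be applied to the three variable changes $A_{\mathrm{sp}}$, $B_{\mathrm{sp}}$ and $C_{\mathrm{sp}}^{-\mathsf T}$.

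\emph{The map $A$.} Take determinants in the split left identity \eqref{eq:pencilisotopy}. This gives
\[
 \widetilde D_{L,\Qs}(A_{\mathrm{sp}}\Bf X)\det B_{\mathrm{sp}}=\det C_{\mathrm{sp}}\,\widetilde D_{L,\Ps}(\Bf X).
\]
By hypothesis the two sides read $c\prod_i (A_{\mathrm{sp}}\Bf X)_i$ and $c'\prod_i X_i$ with $c,c'\ne0$. Both sides lie in the polynomial ring over $E$, which is a UFD. Each $(A_{\mathrm{sp}}\Bf X)_i$ is a nonzero linear form, because $A_{\mathrm{sp}}$ is invertible. Unique factorization then forces each such form to be a scalar multiple of some $X_{\pi(i)}$. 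The assignment $i\mapsto\pi(i)$ is injective, since the rows of $A_{\mathrm{sp}}$ are independent. Hence every row of $A_{\mathrm{sp}}$ has exactly one nonzero entry, in distinct columns, and $A_{\mathrm{sp}}$ is monomial.

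\emph{The map $B$.} The same argument applies to the right split identity $R_{\Qs}(B_{\mathrm{sp}}\Bf Y)A_{\mathrm{sp}}=C_{\mathrm{sp}}R_{\Ps}(\Bf Y)$, which is the split form of the ordinary right identity obtained through the Moore similarity \eqref{eq:ordinary-split-matrices}. Its determinant shows that $B_{\mathrm{sp}}$ is monomial.

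\emph{The map $C$.} Here I would use the trace tensor identity
\[
 \widetilde T_{\Qs}(A_{\mathrm{sp}}\Bf X,B_{\mathrm{sp}}\Bf Y,C_{\mathrm{sp}}^{-\mathsf T}\Bf Z)=\widetilde T_{\Ps}(\Bf X,\Bf Y,\Bf Z).
\]
It says that $A_{\mathrm{sp}}^{\mathsf T}M^{\mathrm{tr}}_{\Qs}(C_{\mathrm{sp}}^{-\mathsf T}\Bf Z)B_{\mathrm{sp}}=M^{\mathrm{tr}}_{\Ps}(\Bf Z)$. Taking determinants shows that $C_{\mathrm{sp}}^{-\mathsf T}$ is monomial. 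The inverse transpose of a monomial matrix is again monomial, so $C_{\mathrm{sp}}$ is monomial.

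The step needing most care is checking that the split identities hold exactly in the stated coordinates, with the trace variable transformed by $C_{\mathrm{sp}}^{-\mathsf T}$ and not by $C_{\mathrm{sp}}$. This comes from the dot-product description of the trace pairing in split coordinates. As an alternative route for $C$, one can evaluate \eqref{eq:pencilisotopy} at a split vertex $e_j$ and compare images, but the determinant argument avoids any rank data. The factorization step itself is routine.
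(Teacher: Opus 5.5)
Your proposal is correct and follows essentially the same route as the paper's proof: take determinants in the split left, right and trace identities, use unique factorization to force $A_{\mathrm{sp}}$, $B_{\mathrm{sp}}$ and $C_{\mathrm{sp}}^{-\mathsf T}$ to be monomial, and then pass from $C_{\mathrm{sp}}^{-\mathsf T}$ to $C_{\mathrm{sp}}$. Your check that the trace variable transforms by $C_{\mathrm{sp}}^{-\mathsf T}$, because the trace pairing is the dot product in split coordinates, matches the identity the paper states in Section~\ref{subsec:determinants}.
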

\begin{proof}
Write $A_{\mathrm{sp}},B_{\mathrm{sp}},C_{\mathrm{sp}}$ for the three split matrices. Taking determinants in \eqref{eq:pencilisotopy} gives
\[
 \prod_i(A_{\mathrm{sp}}\Bf X)_i=\kappa\prod_iX_i,\qquad \kappa\ne0.
\]
Unique factorization forces each row form of $A_{\mathrm{sp}}$ to be a scalar multiple of one coordinate variable. Invertibility makes those variables distinct, so $A_{\mathrm{sp}}$ is monomial. The right determinant gives the same conclusion for $B_{\mathrm{sp}}$. The trace determinant gives it for $C_{\mathrm{sp}}^{-\mathsf T}$, and therefore also for $C_{\mathrm{sp}}$.
\end{proof}

\subsection{Finite fields and Albert's generalized twisted fields}\label{subsec:examples}
We use $K=\F_h$ as the base field.

For the field $\Fs=(\F_{h^n},+,\cdot)$, all three split matrices are diagonal, and $\widetilde D_{\mu,\Fs}$ is the coordinate product for each type $\mu$. Transferring these factors by \eqref{eq:factor-coordinate-transfer} and applying Proposition~\ref{prop:splittingfield} gives
\begin{equation}\label{eq:fieldranks}
 E_{L,K}(\Fs)=E_{R,K}(\Fs)=E_{\mathrm{tr},K}(\Fs)=\F_{h^n},\qquad
 \rho_{L,K}(\Fs)=\rho_{R,K}(\Fs)=\rho_{\mathrm{tr},K}(\Fs)=1.
\end{equation}

Let $F=\F_{h^n}$, let $0\le s,t<n$, and let $c\in F^\times$.
Consider the multiplication
\begin{equation}\label{eq:gtf}
	x\circ y=xy-cx^{h^t}y^{h^s}.
\end{equation}
Put $\kappa=\gcd(n,s,t)$. Then $\Qs=(F,+,\circ)$ is a
presemifield if and only if
\begin{equation}\label{eq:gtf-division}
	\N_{F/\F_{h^\kappa}}(c)
	=c^{(h^n-1)/(h^\kappa-1)}\ne1.
\end{equation}
This is the finite-field norm formulation of Albert's
division condition \cite[Equation~(5)]{Albert}.

Indeed, for $x,y\in F^\times$, the equality $x\circ y=0$
is equivalent to
\[
c=x^{1-h^t}y^{1-h^s}.
\]
If $F^\times=\langle\xi\rangle$, the set of values on the
right is the subgroup
\[
\begin{aligned}
	\{x^{1-h^t}y^{1-h^s}:x,y\in F^\times\}
	&=
	\left\langle
	\xi^{\gcd(h^n-1,h^t-1,h^s-1)}
	\right\rangle\\
	&=
	\langle\xi^{h^\kappa-1}\rangle
	=
	\ker\N_{F/\F_{h^\kappa}}.
\end{aligned}
\]
This proves \eqref{eq:gtf-division}, since the multiplication
is $\F_h$-bilinear.

If $s=0$ or $t=0$, the multiplication factors through an invertible linear map in one variable and is isotopic to field multiplication. If $s=t$, it factors through an invertible linear map on the output and is again isotopic to field multiplication. We therefore restrict the following calculation to
\[
s,t,s-t\ne0\pmod n.
\]

For $s,t,s-t\ne0\pmod n$, Albert's calculation \cite[Theorem~1]{Albert} shows that the centre of an isotopic semifield has order $h^\kappa$. Thus $K=\F_h$ is the full centre precisely when $\kappa=1$. We retain arbitrary $\kappa$ below because the proposition computes relative profiles over the specified field $K$. To compute the central profiles instead, one may replace
\[
(h,n,s,t)
\quad\text{by}\quad
\left(h^\kappa,\frac n\kappa,
\frac s\kappa,\frac t\kappa\right).
\]

\begin{proposition}\label{prop:gtf}
Let $\Qs=(F,+,\circ)$ be defined by \eqref{eq:gtf}, where $F=\F_{h^n}$, $s,t,s-t\ne0\pmod n$, and \eqref{eq:gtf-division} holds. Put
\[
d=\gcd(n,s),\qquad \ell=n/d.
\]
\begin{enumerate}
\item If $d\mid t$, then
\[
 E_{L,K}(\Qs)=\F_{h^n},\qquad \rho_{L,K}(\Qs)=2,
 \qquad \mathcal I_{L,K}(\Qs)=(n,1,2).
\]
\item If $d\nmid t$, the left determinant has $d$ distinct absolutely irreducible components of degree $\ell\ge2$, and
\[
 E_{L,K}(\Qs)=\F_{h^d},\qquad
 \rho_{L,K}(\Qs)=\bot,\qquad
 \mathcal I_{L,K}(\Qs)=(d,\ell,\bot).
\]
\end{enumerate}
For the right determinant replace $(d,t)$ by $(\gcd(n,t),s)$. For the trace determinant replace it by $(\gcd(n,t-s),s)$. Thus the determinant-vertex rank of each type is two when the components are linear, and is $\bot$ otherwise.
\end{proposition}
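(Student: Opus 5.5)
Work in split coordinates indexed by $i\in\mathbb Z/n$, with $X_i$ standing for the $i$-th conjugate $x^{h^i}$. Then $x\circ y=xy-cx^{h^t}y^{h^s}$ has split output forms $f_r=X_rY_r-c^{h^r}X_{r+t}Y_{r+s}$. The split left matrix is
\[
L(\Bf X)=\diag(X_r)-\diag\bigl(c^{h^r}X_{r+t}\bigr)\Pi^{s},
\]
where $\Pi^s$ is the shift by $s$. This permutation splits $\mathbb Z/n$ into $d=\gcd(n,s)$ cycles of length $\ell$, namely the residue classes modulo $d$. So $L(\Bf X)$ is, up to simultaneous permutation, block diagonal with $d$ blocks. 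The block for the class $j$ is a weighted cycle on the indices $j,j+s,\dots,j+(\ell-1)s$.

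Applying \eqref{eq:weighted-cycle} blockwise gives
\[
\widetilde D_{L,\Qs}(\Bf X)=\prod_{j=0}^{d-1}\Bigl(\prod_{k}X_{j+ks}-\gamma_j\prod_{k}X_{j+ks+t}\Bigr),
\qquad \gamma_j=\prod_k c^{h^{j+ks}}.
\]
Note that $\gamma_j$ is a conjugate of $\N_{F/\F_{h^d}}(c)$. The two monomials in block $j$ involve the variables of class $j$ and of class $j+t$ modulo $d$. Transfer to ordinary coordinates is handled by Lemma~\ref{lem:coordinate-transfer}.

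Case (ii), $d\nmid t$. The two monomials in each block have disjoint variable sets, and each is squarefree of degree $\ell$. A binomial $A-\gamma B$ with $A,B$ coprime squarefree monomials and $\gamma\ne0$ is absolutely irreducible. To see this, specialize all variables but one in each monomial to $1$; this gives $X-\gamma Y$, and a factor of the binomial would have to be multihomogeneous in both variable groups. Here $\ell\ge2$, because $s\not\equiv0$. Factors from different blocks are distinct, since their variable sets differ. So there are $d$ components of degree $\ell$, and Proposition~\ref{prop:splittingfield} gives $E=\F_{h^d}$, $d_L=\ell$ and $\rho=\bot$.

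Case (i), $d\mid t$. Both monomials in a block use the same variable set, so the block determinant is $\prod X-\gamma_j\prod X'$ over the same cycle. By Proposition~\ref{prop:splittingfield} this must factor into linear forms. The claim to establish is that $d=1$ is forced. I would argue as follows:
\begin{itemize}
\item Suppose $d>1$. Then the $d$ blocks use disjoint variable sets.
\item An $\ell$-fold product of linear forms cannot equal a binomial in the same variables unless $\ell=1$, which is impossible.
\item The total factorization would then have at least two factors per block, with $\ell\ge2$. This contradicts the transitive Frobenius orbit of degree-$d_L$ components, since each block has degree $\ell$ with $d_L\mid\ell$ and $\ell\ne1$.
\end{itemize}
I expect this step to be the main obstacle and will need care. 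Actually $\gcd(n,s,t)=\kappa$ with $d\mid t$ gives $\kappa=d$. When $\kappa=1$ we get $d=1$ and a single block $\prod X_i-\gamma\prod X_{i+t}$, which is not a product of linear forms. So the intended reading must be different: the left multiplication is $L_x(y)=xy-cx^{h^t}y^{h^s}$, a linearized polynomial in $y$, and its Dickson matrix is what must be examined. I would therefore recompute using the Dickson matrix of $y\mapsto x\circ y$ with $\Phi$ indexed along the $\sigma=h$ ordering. Then $L(\Bf X)_{r,r}=X_r$ and $L(\Bf X)_{r,r+s}=-c^{h^r}X_{r+t}$, which is what I used. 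The check is then that $\det$ factors linearly exactly when each block determinant does.

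The intended linear factorization likely arises via the substitution $X_i\mapsto$ products along $t$-cycles. I would first test the case $n=3$, $s=1$, $t=2$ by hand to fix the exact form of the factorization before generalizing.

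For the vertex rank in case (i), the plan is to evaluate $L(\Bf X)$ at a split vertex $e_i$. The diagonal contributes $1$ in row $i$. The off-diagonal contributes $-c^{h^{i-t}}$ in row $i-t$ at column $i-t+s$. These two rows are distinct since $t\ne0$, so the rank is $2$.

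For the right determinant, repeat the computation with $R(\Bf Y)$. Its shift is by $t$, with weight variables $Y_{r+s}$, so $(d,t)$ becomes $(\gcd(n,t),s)$. For the trace type, $M^{\mathrm{tr}}(\Bf Z)$ has entries $Z_r$ at $(r,r)$ and $-c^{h^r}Z_r$ at $(r+t,r+s)$. Reindexing gives a shift by $s-t$, which accounts for $\gcd(n,t-s)$. Theorem~\ref{thm:invariance} is not needed here, since this is a direct calculation.
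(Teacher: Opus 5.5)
Your block decomposition along the cycles of $r\mapsto r+s$, the weighted-cycle formula, case (ii), and the right and trace reductions follow the paper's route. Case (i), however, contains a genuine error and is left unproved.

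When $d\mid t$, translation by $t$ preserves each residue class modulo $d$. So $\prod_k X_{j+ks+t}$ and $\prod_k X_{j+ks}$ are not two different monomials on one variable set: they are the \emph{same} monomial, the product of all $X_i$ with $i\equiv j\pmod d$. Each block therefore collapses, and
\[
 \widetilde D_{L,\Qs}(\Bf X)=\Bigl(\prod_{j=0}^{d-1}(1-\gamma_j)\Bigr)\prod_{i=0}^{n-1}X_i .
\]
The remaining ingredient is that this scalar is nonzero. Since $d\mid t$, we have $\kappa=\gcd(n,s,t)=d$, so $\gamma_0=\N_{F/\F_{h^d}}(c)\ne1$ by \eqref{eq:gtf-division}, and $\prod_j(1-\gamma_j)=\N_{\F_{h^d}/K}(1-\gamma_0)\ne0$. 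Only then do you obtain $n$ independent linear factors, $E_{L,K}(\Qs)=\F_{h^n}$ and $d_L=1$, and only then are the $e_i$ legitimately the split determinant vertices at which you evaluate ranks.

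Your bullet argument asserts the opposite, and both of its claims are false. For $d=1$ one has $\prod_iX_i-\gamma\prod_iX_{i+t}=(1-\gamma)\prod_iX_i$, which is a product of linear forms. Nor is $d=1$ forced: $n=6$, $s=2$, $t=4$ lies in case (i) with $d=2$. As written, the proposal contradicts the statement in case (i) and then defers to a hand computation.

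There are also smaller repairs to make.
\begin{enumerate}
\item In case (ii), the claim ``factors from different blocks are distinct, since their variable sets differ'' fails when $d\mid 2t$. For $n=4$, $s=2$, $t=1$, the blocks $X_0X_2-\gamma_0X_1X_3$ and $X_1X_3-\gamma_1X_0X_2$ use the same variables. Distinctness should instead come from the multiplicity-one conclusion of Proposition~\ref{prop:splittingfield}, as in the paper. Alternatively, it follows directly from $\gamma_j\gamma_{j+t}=\N_{F/\F_{h^{d/2}}}(c)^{h^j}\ne1$, since here $\kappa=d/2$.
\item Your specialization and multihomogeneity sketch does not prove absolute irreducibility. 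Use the Gauss-lemma argument instead: the binomial is linear and primitive in one variable over the polynomial ring in the others.
\item For rank two, distinct rows are not enough. The two nonzero entries must also lie in distinct columns $i$ and $i-t+s$, which uses $s\not\equiv t\pmod n$.
\end{enumerate}
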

\begin{proof}
The left and right determinant factorizations, together with the absolute irreducibility of their nonlinear components, are established in \cite[Lemmas~20 and~27, Proposition~28]{Menichetti96}. 

The split output forms are
\[
 g_r=X_rY_r-c^{h^r}X_{r+t}Y_{r+s}.
\]
In row $r$, the split left matrix $L_{\Qs}(\Bf X)$ therefore has the diagonal entry $X_r$ and the entry $-c^{h^r}X_{r+t}$ in column $r+s$. The permutation $r\mapsto r+s$ has $d$ cycles, each equal as a set to
\[
 I_r=\{r,r+d,\ldots,r+(\ell-1)d\},\qquad 0\le r<d.
\]
Reordering rows and columns by these cycles makes the matrix block diagonal. Within one block, choosing an off-diagonal entry occupies the diagonal column of the next row. That row must choose its off-diagonal entry as well, and the choice propagates around the entire cycle. Hence only the diagonal permutation and the full $\ell$-cycle contribute. The latter has sign
\[
 (-1)^{\ell-1}(-1)^\ell=-1.
\]
In the computation above, the first factor is the permutation sign, and the second comes from the $\ell$ negative matrix entries. This identity is also valid in every characteristic.

Put
\[
 P_r=\prod_{j=0}^{\ell-1}X_{r+jd},\qquad
 c_r=\prod_{j=0}^{\ell-1}c^{h^{r+jd}}.
\]
Their indices are periodic modulo $d$. The cycle calculation gives
\begin{equation}\label{eq:gtfdet}
 \widetilde D_{L,\Qs}(\Bf X)=\prod_{r=0}^{d-1}(P_r-c_rP_{r+t}).
\end{equation}
If $d\mid t$, then $I_{r+t}=I_r$ and $P_{r+t}=P_r$. The determinant becomes
\[
 \left(\prod_{r=0}^{d-1}(1-c_r)\right)\prod_{i=0}^{n-1}X_i.
\]
Since $d\mid t$, we have $\kappa=\gcd(n,s,t)=d$.
Moreover,
\[
c_0=\N_{F/\F_{h^d}}(c),
\qquad c_r=c_0^{h^r}.
\]
Thus \eqref{eq:gtf-division} gives $c_0\ne1$, and
\[
\prod_{r=0}^{d-1}(1-c_r)
=
\N_{\F_{h^d}/K}(1-c_0)\ne0.
\]
The absolute factors of $\widetilde D_{L,\Qs}$ are therefore the $n$ individual coordinate forms, not merely the $d$ cycle products. By \eqref{eq:factor-coordinate-transfer}, the ordinary determinant $D_{L,\Qs}(\Bf U)=\widetilde D_{L,\Qs}(M\Bf U)$ has $n$ independent absolute linear factors as well. Proposition~\ref{prop:splittingfield} now gives $E_{L,K}(\Qs)=\F_{h^n}$.

Suppose $d\nmid t$. Then $I_r$ and $I_{r+t}$ are disjoint. Choose $u\in I_r$ and write $P_r=X_uA$, $P_{r+t}=B$. Over
\[
 R=\overline K[X_i:i\ne u],
\]
the polynomial $AX_u-c_rB$ is primitive: $A$ and $B$ are monomials in disjoint variables, and $c_r\ne0$, so $\gcd(A,c_rB)=1$. It is linear and irreducible over $\operatorname{Frac}(R)$, and Gauss' lemma makes it irreducible over $R$. This proves absolute irreducibility of each split binomial. Substitution by $M$ preserves irreducibility and multiplicities by Lemma~\ref{lem:coordinate-transfer}. Proposition~\ref{prop:splittingfield}, applied to $D_{L,\Qs}$ in the original $K$-basis, therefore shows that $\widetilde D_{L,\Qs}$ has no repeated absolute factors. Thus the $d$ displayed binomials are pairwise nonproportional, each of degree $\ell$.

The component field can also be seen directly. If $b_0,\ldots,b_{n-1}$ is the original $K$-basis, write the split coordinates in terms of the ordinary variables as the linear forms
\[
 \ell_i(\Bf U)=(M\Bf U)_i=\sum_j b_j^{h^i}U_j.
\]
The ordinary factors corresponding to the binomials in \eqref{eq:gtfdet} are
\[
 G_r(\Bf U)=\prod_{j=0}^{\ell-1}\ell_{r+jd}(\Bf U)
 -c_r\prod_{j=0}^{\ell-1}\ell_{r+t+jd}(\Bf U).
\]
By \eqref{eq:split-factor-frobenius}, Frobenius on coefficients, fixing $\Bf U$, sends $\ell_i$ to $\ell_{i+1}$ and $c_r$ to $c_{r+1}$. It therefore cycles through the $d$ distinct ordinary components $G_r$, whose component splitting field is exactly $\F_{h^d}$. In the linear-factor case it cycles instead through the $n$ hyperplanes $\ell_i=0$, giving $\F_{h^n}$. The coefficients $c_r$ alone, with the split variables held fixed, do not determine this original component field.

In the linear-factor case, $e_j$ is a split determinant vertex, with ordinary representative $M^{-1}e_j$. Evaluate $L_{\Qs}$ at $\Bf X=e_j$. Only rows $j$ and $j-t$ are nonzero. They are distinct, and their nonzero entries occupy columns $j$ and $j-t+s$, which are distinct because $s\ne t$. Both coefficients are nonzero. Thus the split matrix rank is two. By \eqref{eq:vertex-coordinate-ranks}, $A_{L,\Qs}(M^{-1}e_j)$ has the same rank, proving the left assertion. Interchanging the inputs gives the right assertion.

For the trace determinant, let $\Qs^\dagger=(F,+,\circ^\dagger)$ be defined by
\begin{equation}\label{eq:gtf-adjoint}
 x\circ^\dagger z=xz-c^{h^{n-s}}x^{h^{t-s}}z^{h^{n-s}}.
\end{equation}
Invariance of trace under Frobenius gives
\[
 \Tr_{F/K}\bigl(z(x\circ y)\bigr)
 =\Tr_{F/K}\bigl(y(x\circ^\dagger z)\bigr).
\]
Hence the split matrices satisfy $M_{\Qs}^{\mathrm{tr}}(\Bf Z)=R_{\Qs^\dagger}(\Bf Z)^{\mathsf T}$, and $\widetilde D_{\mathrm{tr},\Qs}(\Bf Z)=\widetilde D_{R,\Qs^\dagger}(\Bf Z)$. With the same ordinary basis in both cases, \eqref{eq:ordinary-split-determinants} gives $D_{\mathrm{tr},\Qs}(\Bf W)=(\det M)^2D_{R,\Qs^\dagger}(\Bf W)$. The nonzero scalar has no effect on the component fields or ranks. The right-hand result applied to the shifts $t-s,-s$ gives $\gcd(n,t-s)$ and the condition that this divisor divides $s$. This proves all the assertions.
\end{proof}

In particular, if $n=r\ge5$ is prime and $\Qs$ is a proper Albert's presemifield, the three relevant greatest common divisors are all one. It follows that
\begin{equation}\label{eq:albert-prime}
 E_{L,K}(\Qs)=E_{R,K}(\Qs)=E_{\mathrm{tr},K}(\Qs)=\F_{h^r},\qquad
 \rho_{L,K}(\Qs)=\rho_{R,K}(\Qs)=\rho_{\mathrm{tr},K}(\Qs)=2.
\end{equation}
This calculation applies to all proper Albert's generalized twisted fields of that dimension. Their centre and all three nuclei also have order $h$, so a comparison with our family will require more than these nuclear parameters.

For an explicit nonlinear case, take $n=6$, $s=2$ and $t=1$, over any $\F_h$ with $h>2$, and choose $c$ with $\N_{\F_{h^6}/\F_h}(c)\ne1$. The split left determinant $\widetilde D_{L,\Qs}$ is a product of two absolutely irreducible cubics of the form
\[
 X_0X_2X_4-c_0X_1X_3X_5.
\]
After transfer to the ordinary coordinates, the component splitting field is $\F_{h^2}$, and the extended determinant-vertex rank is $\bot$. Neither cubic becomes a product of hyperplanes over any larger field. By contrast, the right and trace determinants split into six independent linear factors and have vertex ranks two. This gives a concrete example in which the three determinant types must be treated separately.

\subsection{Dickson semifields}
Let $K=\F_h$ have odd order, let $M=\F_{h^m}$ with $m>1$, and let $\sigma$ generate $\Gal(M/K)$. For a nonsquare $a\in M$, the Dickson semifield $\Ds=(M^2,+,\circ)$ has multiplication
\begin{equation}\label{eq:dickson-semifield}
 (x,y)\circ(u,v)=\bigl(xu+a y^\sigma v^\sigma,\ xv+yu\bigr);
\end{equation}
see \cite{LP,GK}. We use a product $K$-basis of $M^2$. The split coordinates in this subsection are obtained by applying the Moore-coordinate change of $M/K$ separately to its two coordinates.

\begin{samepage}
\begin{proposition}\label{prop:dickson}
Let $\Ds$ be the Dickson semifield in \eqref{eq:dickson-semifield}. Let $N_x=N_x(\Bf U_0)$ and $N_y=N_y(\Bf U_1)$ be the polynomial norms from $M$ to $K$ in the two ordinary coordinate vectors, and put $A=\N_{M/K}(a)$.
\begin{enumerate}
\item The ordinary left and right determinant forms over $K$ are
\begin{equation}\label{eq:dickson-det}
 \begin{aligned}
 D_{L,\Ds}(\Bf U_0,\Bf U_1)
 &=D_{R,\Ds}(\Bf U_0,\Bf U_1)\\
 &=N_x^2-AN_y^2.
 \end{aligned}
\end{equation}
Each has two distinct absolutely irreducible components of degree $m$, and
\[
 \begin{gathered}
 E_{L,K}(\Ds)=E_{R,K}(\Ds)=\F_{h^2},\qquad
 \rho_{L,K}(\Ds)=\rho_{R,K}(\Ds)=\bot,\\
 \mathcal I_{L,K}(\Ds)=\mathcal I_{R,K}(\Ds)=(2,m,\bot).
 \end{gathered}
\]
\item The ordinary trace determinant has $m$ distinct absolutely irreducible quadratic components, and
\[
 \begin{gathered}
 E_{\mathrm{tr},K}(\Ds)=\F_{h^m},\qquad
 \rho_{\mathrm{tr},K}(\Ds)=\bot,\\
 \mathcal I_{\mathrm{tr},K}(\Ds)=(m,2,\bot).
 \end{gathered}
\]
\end{enumerate}
\end{proposition}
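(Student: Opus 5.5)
The plan is to compute all three split determinant forms explicitly in the product Moore coordinates, factor them over $\overline K$, and then read off the profiles from Proposition~\ref{prop:splittingfield} and Lemma~\ref{lem:coordinate-transfer}. We work in the $\sigma$-ordering of the embeddings of $M$, which differs from the $h$-ordering by a row permutation of the Moore matrix. Write $X_r,Y_r$ for the split coordinates of $(x,y)$, $U_r,V_r$ for those of $(u,v)$, and $a_r=a^{\sigma^r}$, with indices modulo $m$. The split output forms of \eqref{eq:dickson-semifield} are
\[
 X_rU_r+a_{r+1}Y_{r+1}V_{r+1},\qquad X_rV_r+Y_rU_r .
\]

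\textbf{Step 1 (left and right determinants).} The split left matrix in the variables $(\Bf U,\Bf V)$ has block form
\[
\begin{pmatrix}\Delta_X & \diag(a_{r+1}Y_{r+1})\Pi\\ \Delta_Y & \Delta_X\end{pmatrix},
\]
with $\Delta_X=\diag(X_r)$ and $\Delta_Y=\diag(Y_r)$. The two lower blocks are diagonal and commute, so the block determinant equals
\[
\det\bigl(\Delta_X^2-\diag(a_{r+1}Y_{r+1})\Pi\,\Delta_Y\bigr)
=\det\bigl(\Delta_X^2-\diag(a_{r+1}Y_{r+1}^2)\Pi\bigr).
\]
This is a diagonal matrix minus a weighted full $m$-cycle. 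The cycle-propagation argument used for \eqref{eq:weighted-cycle} and \eqref{eq:gtfdet} then gives
\[
\prod_rX_r^2-\Bigl(\prod_ra_r\Bigr)\prod_rY_r^2 .
\]
After transfer by \eqref{eq:factor-coordinate-transfer}, this is $N_x^2-AN_y^2$. The right determinant is computed in the same way, with the roles of the two arguments interchanged, and gives the same form.

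\textbf{Step 2 (components of the left and right forms).} Over $\overline K$ the form factors as $(N_x-\beta N_y)(N_x+\beta N_y)$ with $\beta^2=A$. Since $h$ is odd and $a$ is a nonsquare in $M$, the norm $A$ is a nonsquare in $K$, because $\N_{M/K}$ is surjective and maps squares exactly onto squares. Hence $\beta\notin K$ and $\beta\in\F_{h^2}$. Each factor $\prod X_r\mp\beta\prod Y_r$ is linear in $X_0$, and its two terms are coprime monomials in disjoint variables. Gauss' lemma, exactly as in the proof of Proposition~\ref{prop:gtf}, shows that it is absolutely irreducible. The two factors are distinct because $\beta\ne0$. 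Proposition~\ref{prop:splittingfield} now gives two components of degree $m\ge2$, $E_{L,K}=E_{R,K}=\F_{h^2}$, and $\rho=\bot$.

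\textbf{Step 3 (trace determinant).} Using $\Tr(za\,y^\sigma v^\sigma)=\Tr(z^{\sigma^{-1}}a^{\sigma^{-1}}yv)$, the split trace tensor becomes
\[
\sum_r\bigl(Z_rX_rU_r+a_rZ_{r-1}Y_rV_r+W_rX_rV_r+W_rY_rU_r\bigr).
\]
Its coefficient matrix in $(\Bf X,\Bf Y)$ versus $(\Bf U,\Bf V)$ is therefore block diagonal, with one $2\times2$ block per index $r$:
\[
\begin{pmatrix}Z_r&W_r\\ W_r&a_rZ_{r-1}\end{pmatrix}.
\]
The split trace determinant is thus $\prod_r\bigl(a_rZ_rZ_{r-1}-W_r^2\bigr)$. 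Since $m>1$, the variables $Z_r$, $Z_{r-1}$ and $W_r$ are distinct, so each factor is a quadratic form of rank $3$ in odd characteristic and hence absolutely irreducible. The factors are pairwise nonproportional because they involve different variables $W_r$. Proposition~\ref{prop:splittingfield}, applied through Lemma~\ref{lem:coordinate-transfer}, then yields $m$ components of degree $2$, $E_{\mathrm{tr},K}=\F_{h^m}$, and $\rho=\bot$.

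\textbf{Main obstacle.} The delicate part is the index bookkeeping: the semilinear term $a y^\sigma v^\sigma$ must be placed correctly in the split coordinates, and the conjugation of $a$ in the trace computation must be tracked. Beyond that, one must justify the commuting-block determinant identity, and check that the product basis makes the block form of the trace pencil valid with the scalar $(\det M)^2$ harmless. I would verify the index placement first on the case $m=2$ before writing the general argument.
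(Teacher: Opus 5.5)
Your proposal is correct and follows essentially the same route as the paper: you reduce the left and right determinants through the commuting diagonal blocks to a diagonal-minus-weighted-cycle determinant, prove absolute irreducibility of $N_x\mp\beta N_y$ by the Gauss-lemma argument, split the trace pencil into $2\times 2$ blocks with rank-three quadratic factors, and conclude with Proposition~\ref{prop:splittingfield}. There is one harmless index slip: the $r$-th split output form is $X_rU_r+a_rY_{r+1}V_{r+1}$, so the trace block entry should be $a_{r-1}Z_{r-1}$. Since only $\prod_r a_r=A$ and the nonvanishing of each coefficient are used, this changes nothing.
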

\end{samepage}
\begin{proof}
We verify \eqref{eq:dickson-det} by an explicit block elimination. Write $\sigma(z)=z^{h^s}$, where $\gcd(m,s)=1$, and read all indices modulo $m$. Choose a $K$-basis $b_0,\ldots,b_{m-1}$ of $M$ and put
\[
 B_M=(b_j^{h^i})_{i,j},\qquad B_2=\diag(B_M,B_M).
\]
The ordinary parameter pair $(\Bf U_0,\Bf U_1)$ and the split parameter pair $(\Bf X_0,\Bf X_1)$ are related by $\Bf X_j=B_M\Bf U_j$ for $j=0,1$. We similarly use $(\Bf Y_0,\Bf Y_1)$ for the split coordinates of the input pair $(u,v)$. The split output forms of \eqref{eq:dickson-semifield} are
\[
 X_{0,i}Y_{0,i}+a^{h^i}X_{1,i+s}Y_{1,i+s},
 \qquad
 X_{1,i}Y_{0,i}+X_{0,i}Y_{1,i},
 \qquad 0\le i<m.
\]
With the input and output ordered by the first coordinate block and then the second, the split left matrix is
\[
 L_{\Ds}(\Bf X_0,\Bf X_1)
 =\begin{pmatrix}\Delta_0&C_s\\\Delta_1&\Delta_0\end{pmatrix},
\]
where
\[
 \Delta_0=\diag(X_{0,i}),\qquad
 \Delta_1=\diag(X_{1,i}),\qquad
 C_s=\diag(a^{h^i}X_{1,i+s})\Pi_s,
\]
and $(\Pi_s\Bf Y_1)_i=Y_{1,i+s}$. In particular, row $i$ of $C_s$ has its only nonzero entry $a^{h^i}X_{1,i+s}$ in column $i+s$.

Put
\[
 \mathscr R=M[X_{0,0},\ldots,X_{0,m-1},X_{1,0},\ldots,X_{1,m-1}],
 \qquad \mathscr E=\operatorname{Frac}(\mathscr R).
\]
Over $\mathscr E$, the diagonal matrix $\Delta_0$ is invertible. Multiplication on the left by
\[
 E=\begin{pmatrix}I_m&0\\-\Delta_1\Delta_0^{-1}&I_m\end{pmatrix}
\]
gives
\[
 EL_{\Ds}=\begin{pmatrix}\Delta_0&C_s\\0&S\end{pmatrix},
 \qquad S=\Delta_0-\Delta_1\Delta_0^{-1}C_s.
\]
Since $\det E=1$,
\[
 \det L_{\Ds}=\det\Delta_0\,\det S=\det(\Delta_0S).
\]
The matrices $\Delta_0$ and $\Delta_1$ are diagonal and commute, so
\[
 \Delta_0S
 =\Delta_0^2-\Delta_0\Delta_1\Delta_0^{-1}C_s
 =\Delta_0^2-\Delta_1C_s.
\]
Thus
\begin{equation}\label{eq:dickson-reduced-matrix}
 \begin{aligned}
 \widetilde D_{L,\Ds}(\Bf X_0,\Bf X_1)
 &=\det L_{\Ds}(\Bf X_0,\Bf X_1)\\
 &=\det\!\left(\diag(X_{0,i}^2)
 -\diag(a^{h^i}X_{1,i}X_{1,i+s})\Pi_s\right).
 \end{aligned}
\end{equation}
Only the commutativity of $\Delta_0$ and $\Delta_1$ was used here; neither is assumed to commute with $C_s$.

Since $\gcd(m,s)=1$, the same cycle determinant calculation as in the proof of Proposition~\ref{prop:gtf} gives
\[
 \begin{aligned}
 \widetilde D_{L,\Ds}(\Bf X_0,\Bf X_1)
 &=\prod_{i=0}^{m-1}X_{0,i}^2
 -\prod_{i=0}^{m-1}\bigl(a^{h^i}X_{1,i}X_{1,i+s}\bigr)\\
 &=\left(\prod_{i=0}^{m-1}X_{0,i}\right)^2
 -A\left(\prod_{i=0}^{m-1}X_{1,i}\right)^2,
 \end{aligned}
\]
where $\prod_i a^{h^i}=A$ and $\prod_iX_{1,i+s}=\prod_iX_{1,i}$. Although the block elimination was performed in $\mathscr E$, both sides belong to $\mathscr R$, so this is a polynomial identity. It remains valid when some $X_{0,i}$ are zero.

The ordinary and split matrices are related by the similarity in \eqref{eq:ordinary-split-matrices}, with $B_2$ in place of $M$. Hence
\[
 D_{L,\Ds}(\Bf U_0,\Bf U_1)
 =\widetilde D_{L,\Ds}(B_M\Bf U_0,B_M\Bf U_1).
\]
Under this substitution, $\prod_iX_{0,i}$ and $\prod_iX_{1,i}$ become $N_x$ and $N_y$, respectively. Thus the ordinary determinant is $N_x^2-AN_y^2$. Commutativity makes the right multiplication matrix at the same parameter value equal to the left multiplication matrix, proving \eqref{eq:dickson-det}.

The element $A$ is a nonsquare in $K$, since the norm sends a generator of $M^\times$ to a generator of $K^\times$. Choose $b\in\F_{h^2}$ with $b^2=A$. In ordinary coordinates the two factors are $N_x-bN_y$ and $N_x+bN_y$. After the block Moore substitution, each is a difference of products in disjoint variable sets, and is absolutely irreducible by the same Gauss-lemma argument as in Proposition~\ref{prop:gtf}. Absolute irreducibility is preserved by the inverse substitution, so
\[
 E_{L,K}(\Ds)=E_{R,K}(\Ds)=\F_{h^2},
\]
and there are two absolute components of degree $m$. Thus $\rho_{L,K}(\Ds)=\rho_{R,K}(\Ds)=\bot$ for $m>1$. This also shows directly that \eqref{eq:dickson-semifield} has no zero divisors: at actual $x,y\in M$, the equality $N_x^2=AN_y^2$ implies $N_x=N_y=0$, hence $x=y=0$.

The trace determinant has a different factorization. Use the nondegenerate pairing
\[
 \langle(z_0,z_1),(v_0,v_1)\rangle
 =\Tr_{M/K}(z_0v_0+z_1v_1)
\]
on $M^2$. For fixed $(z_0,z_1)$, moving $\sigma$ across the trace shows that the bilinear form in the two input pairs is the trace of the $M$-bilinear form with matrix
\[
 \begin{pmatrix}
 z_0&z_1\\ z_1&(az_0)^{\sigma^{-1}}
 \end{pmatrix}.
\]
For the ordinary parameter vectors $\Bf W_0,\Bf W_1$, set $z_j=\sum_i b_iW_{j,i}$ and interpret conjugations and norms polynomially, as in \eqref{eq:polynomial-norm}. With $\gamma=(\det B_2)^2\in K^\times$, the ordinary trace determinant for the chosen pairing is
\begin{equation}\label{eq:dickson-trace}
 D_{\mathrm{tr},\Ds}(\Bf W_0,\Bf W_1)
 =\gamma\,\N_{M/K}\bigl(z_0(az_0)^{\sigma^{-1}}-z_1^2\bigr).
\end{equation}
Here $\gamma$ is the determinant of the Gram matrix of the pairing in the product basis. Equivalently, with $\Bf Z_j=B_M\Bf W_j$, the split determinant is
\[
 \widetilde D_{\mathrm{tr},\Ds}(\Bf Z_0,\Bf Z_1)
 =\prod_{i=0}^{m-1}
 \left(a^{h^{i-s}}Z_{0,i}Z_{0,i-s}-Z_{1,i}^2\right).
\]
Each factor is a nondegenerate quadratic form in three distinct variables. Since the characteristic is odd, it is absolutely irreducible. The block Moore change transfers these $m$ factors to the ordinary determinant; its nonzero congruence scalar $\gamma$ changes neither their degrees nor their fields of definition. Proposition~\ref{prop:splittingfield} therefore gives
\[
 E_{\mathrm{tr},K}(\Ds)=\F_{h^m},\qquad
 \rho_{\mathrm{tr},K}(\Ds)=\bot.
\]
Changing the chosen nondegenerate pairing is an invertible $K$-linear change in the ordinary trace variable and does not affect these invariants.
\end{proof}

Table~\ref{tab:families} records the invariant for each determinant type. The entries $L$, $R$ and $\mathrm{tr}$ refer respectively to the matrices obtained by fixing the first multiplication input, the second input, and the trace parameter, as in \eqref{eq:three-matrices}. When several types are listed, the row applies to each of them separately. For Albert's multiplication, put
\[
 (d_L,a_L)=(\gcd(n,s),t),\quad
 (d_R,a_R)=(\gcd(n,t),s),\quad
 (d_{\mathrm{tr}},a_{\mathrm{tr}})=(\gcd(n,t-s),s).
\]
In the rows labeled by $\mu$, choose one of these three pairs and apply the displayed divisibility condition. The column headed $E_{\mu,K}$ gives the component splitting field relative to the stated base field $K$. The degree column records the common degree of the absolute components, not the extension degree $[E_{\mu,K}:K]$. The last column is the single value in Definition~\ref{def:vertex}. The stated fields specify relative profiles; their relation to the full-centre profile is given in Proposition~\ref{prop:profile-restriction}.
\begin{table}[htbp]
\centering\small
\setlength{\tabcolsep}{3pt}
\caption{Component splitting fields, absolute component degrees and extended determinant-vertex ranks. Relative profiles over the stated scalar fields. The base field is $K=\F_h$.}
\label{tab:families}
\begin{tabularx}{\textwidth}{@{}>{\raggedright\arraybackslash}p{24mm}>{\raggedright\arraybackslash}Xcccc@{}}
\toprule
(Pre)semifield & Conditions & Determinant type & $E_{\mu,K}$ & Degree & $\rho$\\
\midrule
Field & dimension $n$ & $L,R,\mathrm{tr}$ & $\F_{h^n}$ & $1$ & $1$\\[4pt]
Albert & proper; $d_\mu\mid a_\mu$ & $\mu$ & $\F_{h^n}$ & $1$ & $2$\\[4pt]
Albert & proper; $d_\mu\nmid a_\mu$ & $\mu$ & $\F_{h^{d_\mu}}$ & $n/d_\mu$ & $\bot$\\[4pt]
Albert & proper; prime dimension $r$ & $L,R,\mathrm{tr}$ & $\F_{h^r}$ & $1$ & $2$\\[4pt]
Dickson & dimension $2m$, $m>1$ & $L,R$ & $\F_{h^2}$ & $m$ & $\bot$\\[4pt]
Dickson & dimension $2m$, $m>1$ & $\mathrm{tr}$ & $\F_{h^m}$ & $2$ & $\bot$\\
\bottomrule
\end{tabularx}
\end{table}

\FloatBarrier
\section{Structure and Isotopy of the New Family}\label{sec:structure}

Throughout this section the parameters satisfy \eqref{eq:admissible}. We first calculate the nuclei directly. This calculation does not use the determinant-vertex invariant or a classification of isotopy between members.

\subsection{Left, middle and right nuclei}\label{subsec:direct-nuclei}

Put $\mathscr C=\mathscr C(\Ps_{w,s})$ and $\mathscr C^d=\mathscr C(\Ps_{w,s}^d)$, with the right-multiplication convention of \eqref{eq:spreadset}. Write $m_a(x)=ax$ for ordinary field multiplication by $a\in F$.

\begin{theorem}\label{thm:direct-nuclei}
The idealisers of these spread sets are
\begin{equation}\label{eq:nuclear-idealisers}
 I_l(\mathscr C^d)=I_l(\mathscr C)=I_r(\mathscr C)
 =\{m_a:a\in K\}.
\end{equation}
All the idealisers are taken in $\End_{\F_p}(F)$.
\end{theorem}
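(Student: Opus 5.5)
The plan is to turn each idealiser element into an autotopism of a very restricted shape, and then use the determinant structure of Theorem~\ref{thm:division} and Lemma~\ref{lem:cyclic} to force that autotopism to be scalar. The inclusion $\{m_a:a\in K\}\subseteq I$ is immediate for all three idealisers, because $*_{w,s}$ is $K$-bilinear: $m_aR_y=R_{ay}$ and $R_ym_a=R_{ay}$, and similarly $m_aL_x=L_{ax}$. For the reverse inclusion we treat $I_l(\mathscr C)$ in detail; $I_r(\mathscr C)$ and $I_l(\mathscr C^d)$ are handled by the same argument with the roles of the three maps permuted. By \eqref{eq:opposite}, $I_l(\mathscr C^d)$ is also the same computation for $\Ps_{w^{-1},s}$.

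Step 1 (reduction to autotopisms). By Proposition~\ref{prop:spread-nuclei}, $I_l(\mathscr C)$ is a finite field, so every nonzero $T\in I_l(\mathscr C)$ is invertible. Since $T\mathscr C\subseteq\mathscr C$ and $\mathscr C$ is a space of dimension $N$, we get $T\mathscr C=\mathscr C$. Hence there is an additive bijection $B$ with $TR_y=R_{By}$, that is,
\[
 T(x*y)=x*(By)\qquad(x,y\in F),
\]
so $(\id,B,T)$ is an autotopism of $\Ps_{w,s}$.

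Step 2 (linearity and monomiality). By Lemma~\ref{lem:semilinearity} and its dual form, all three components of this autotopism share one companion automorphism of $K$. The first component is $\id$, so that automorphism is trivial, and $T$ and $B$ are $K$-linear. Now the triple is a $K$-linear isotopism. Theorem~\ref{thm:division} and Lemma~\ref{lem:cyclic} show that all three split determinants are nonzero multiples of coordinate products, so Lemma~\ref{lem:monomial} applies: the split matrices of $T$ and $B$ are monomial. These split matrices are Dickson matrices \eqref{eq:dickson} in the $\sigma$-ordering. A Dickson matrix is monomial only when exactly one coefficient $a_i$ is nonzero, so $T(x)=c\,x^{\sigma^k}$ and $B(y)=b\,y^{\sigma^l}$ with $b,c\ne0$.

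Step 3 (comparing monomials). Substitute these forms into $T(x*y)=x*(By)$. By uniqueness of the linearized representation \eqref{eq:linearized}, extended to bilinearized polynomials, the two sides must have the same monomial support. The $x$-exponents on the left are $\sigma^{1+k},\sigma^{-2+k},\sigma^{2+k},\sigma^{-1+k}$, while on the right they are $\sigma^{\pm1},\sigma^{\pm2}$. So $\{\pm1,\pm2\}+k=\{\pm1,\pm2\}$ in $\mathbb Z/n$. Summing the elements gives $4k\equiv0$, and since $n$ is odd, $k=0$. Matching the $y$-exponents in the same way gives $l=0$. Comparing coefficients of $x^\sigma y^{\sigma^2}$, $x^\sigma y^{\sigma^{-1}}$ and the remaining terms then yields $c=b^{\sigma^2}=b^{\sigma^{-1}}=b^{\sigma}=\cdots$. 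Hence $b^\sigma=b$, so $b\in K$ and $c=b$, giving $T=m_c$ with $c\in K$.

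The main obstacle I expect is Step 2: justifying that the given element really induces a $K$-linear isotopism, so that Lemma~\ref{lem:monomial} applies. In particular, this requires care with the companion-automorphism argument when one component is the identity, and with the fact that monomial split matrices correspond to single-term linearized polynomials, which is where the Dickson-matrix structure is used. For $I_r(\mathscr C)$ the induced autotopism is $(T^{-1},B,\id)$ up to relabelling, and for $I_l(\mathscr C^d)$ it is $(B,\id,T)$. Steps 2 and 3 apply verbatim to both, with the same support-shift argument on the other variable.
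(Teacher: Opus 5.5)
Your proof is correct, but it follows a different route from the paper's.

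\textbf{Your route.} You turn a nonzero idealiser element into an autotopism with an identity component. That identity component forces the common companion automorphism in Lemma~\ref{lem:semilinearity} to be trivial, which gives $K$-linearity. You then feed the determinant factorizations of Theorem~\ref{thm:division} and Lemma~\ref{lem:cyclic} into Lemma~\ref{lem:monomial}, which reduces everything to a comparison of monomial supports. In effect you prove the special case of Theorem~\ref{thm:parameters} and Corollary~\ref{cor:autotopisms} in which one component is the identity. There is no circularity, since Lemma~\ref{lem:monomial} rests only on the determinant factorizations.

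\textbf{The paper's route.} The paper's proof is more elementary and self-contained. It gets $K$-linearity by splitting the $p$-polynomials of $A$ and $T$ according to exponent indices modulo $e$. This needs no invertibility of $T$ (so no appeal to Proposition~\ref{prop:spread-nuclei}) and no external semilinearity theorem. It then writes $T(z)=\sum_i c_iz^{\sigma^i}$ and reads off the coefficients of $y^{\sigma^j}$ for $j\notin\{\pm1,\pm2\}$ to eliminate every $c_i$ with $i\ne0$, without using determinants at all. It computes only $I_l(\mathscr C^d)$ directly. The result is transferred to $I_l(\mathscr C)$ through $\mathscr C(\Ps_{w,s})=m_w\mathscr C(\Ps^d_{w^{-1},s})$, and to $I_r(\mathscr C)$ through the trace adjoint and the cyclic symmetry \eqref{eq:cyclic-tensor}.

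\textbf{Trade-off.} Your version treats the three idealisers uniformly and shows the nuclei as a by-product of the autotopism structure. It relies on considerably heavier results, which the paper avoids so that the nuclear computation stands independently of the isotopy classification.

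\textbf{Minor point.} For $I_r(\mathscr C)$ the final coefficient step is not literally verbatim. From $(cx)*(by)=x*y$ you get relations such as $c^\sigma b^{\sigma^2}=c^\sigma b^{\sigma^{-1}}=1$, hence $b^{\sigma^3}=b$. So there you need $\gcd(n,3)=1$ to conclude $b\in K$. This holds under the standing hypotheses, so your conclusion is unaffected.
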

\begin{proof}
Let $T\in I_l(\mathscr C^d)$. The parametrization $x\mapsto L_x$ is an additive isomorphism from $F$ onto $\mathscr C^d$. The inclusion $T\mathscr C^d\subseteq\mathscr C^d$ therefore gives a unique additive map $A$ with $T L_x=L_{Ax}$. In terms of multiplication,
\begin{equation}\label{eq:left-nuclear-identity}
 (Ax)*_{w,s}y=T(x*_{w,s}y)\qquad(x,y\in F).
\end{equation}
Put $N=en$. Express $A,T$ as $p$-polynomials, and group terms according to their exponent indices modulo $e$:
\[
 A=\sum_{r=0}^{e-1}A_r,\qquad T=\sum_{r=0}^{e-1}T_r.
\]
On the left of \eqref{eq:left-nuclear-identity}, every $y$-exponent has index divisible by $e$. On the right, the $y$-indices in $T_r(x*_{w,s}y)$ are congruent to $r$ modulo $e$. Uniqueness of the bilinearized polynomial gives $T_r(x*_{w,s}y)=0$ for $r\ne0$. Fixing $x\ne0$ and using surjectivity of $y\mapsto x*_{w,s}y$ proves $T_r=0$. Comparison of the $x$-indices now gives $A_r(x)*_{w,s}y=0$ for $r\ne0$, so $A_r=0$. Thus $A,T$ are $K$-linear, without assuming either is nonzero.

In the $\sigma$-ordering write
\[
 A(x)=\sum_{i\in\mathbb Z/n}a_i x^{\sigma^i},\qquad
 T(z)=\sum_{i\in\mathbb Z/n}c_i z^{\sigma^i}.
\]
Let $I=\{-2,-1,1,2\}\subset\mathbb Z/n\mathbb Z$. For $j\notin I$, the coefficient of $y^{\sigma^j}$ on the left of \eqref{eq:left-nuclear-identity} is zero. On the right it is
\begin{equation}\label{eq:nuclear-coefficient}
 \begin{split}
 &(c_{j-2}+c_{j+1})x^{\sigma^{j-1}}+c_{j+1}x^{\sigma^{j+2}}\\
 &\qquad+w(c_{j-1}+c_{j+2})x^{\sigma^{j+1}}
       +wc_{j-1}x^{\sigma^{j-2}}.
 \end{split}
\end{equation}
The four exponents are distinct. Hence $c_{j-2}=c_{j-1}=c_{j+1}=c_{j+2}=0$. If $c_i\ne0$, it follows that $i+I=I$. A translation stabilizing this four-element set has order dividing four, and its order also divides the odd integer $n$. It is therefore the zero translation. Thus $T=c_0\id$.

The coefficient of $y^{\sigma^2}$ gives $A(x)^\sigma=c_0x^\sigma$, so $A=m_a$ with $a^\sigma=c_0$. The coefficient of $y^{\sigma^{-2}}$ gives $wa^{\sigma^{-1}}=wc_0$, whence $a^{\sigma^2}=a$. Since $\sigma^2$ generates $\Gal(F/K)$, we have $a\in K$ and $c_0=a$. Conversely, $K$-bilinearity gives $m_a\mathscr C^d\subseteq\mathscr C^d$ for every $a\in K$. This determines $I_l(\mathscr C^d)$.

The identity $x*_{w,s}y=w(y*_{w^{-1},s}x)$ gives
\[
 \mathscr C(\Ps_{w,s})=m_w\mathscr C(\Ps_{w^{-1},s}^d).
\]
Their left idealisers are conjugate by $m_w$. The preceding calculation applies to $w^{-1}$, and $m_w$ commutes with every $K$-scalar map. Hence $I_l(\mathscr C)=\{m_a:a\in K\}$.

Finally, if $T\in I_r(\mathscr C)$, there is a unique additive map $B$ satisfying $R_yT=R_{By}$, or
\[
 (Tx)*_{w,s}y=x*_{w,s}(By).
\]
Take the absolute trace after multiplication by $z$, and write $T^*$ for the adjoint with respect to $\Tr_{F/\F_p}(uv)$. The cyclic trace identity \eqref{eq:cyclic-tensor} yields
\[
 \Tr_{F/\F_p}\bigl(x\,T^*(y*_{w,s}z)\bigr)
 =\Tr_{F/\F_p}\bigl(x\,((By)*_{w,s}z)\bigr).
\]
Nondegeneracy implies $T^*L_y=L_{By}$, so $T^*\in I_l(\mathscr C^d)$. Therefore $T^*=m_a$ for $a\in K$. Scalar multiplication is self-adjoint, giving $T=m_a$. Conversely, every $m_a$ with $a\in K$ belongs to $I_r(\mathscr C)$ by bilinearity. This completes \eqref{eq:nuclear-idealisers}.
\end{proof}

\begin{corollary}\label{cor:nuclei}
Every semifield $\Ss=(\mathbb S,+,\circ)$ isotopic to $\Ps_{w,s}$ has
\begin{equation}\label{eq:nuclear-signature}
 (|\mathbb S|,|Z(\Ss)|,|N_l(\Ss)|,|N_m(\Ss)|,|N_r(\Ss)|)
 =(q^n,q,q,q,q).
\end{equation}
The same holds throughout the Knuth orbit of $\Ps_{w,s}$.
\end{corollary}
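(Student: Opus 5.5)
The plan is to read the nuclear orders off the idealisers computed in Theorem~\ref{thm:direct-nuclei}, using Proposition~\ref{prop:spread-nuclei}. The order $|\mathbb S|=q^n$ is immediate, since isotopisms are additive bijections and $|F|=q^n$. By Proposition~\ref{prop:spread-nuclei}, applied with $\Ps=\Ps_{w,s}$ and $\mathscr C=\mathscr C(\Ps_{w,s})$, we have $N_l(\Ss)\cong I_l(\mathscr C^d)$, $N_m(\Ss)\cong I_r(\mathscr C)$ and $N_r(\Ss)\cong I_l(\mathscr C)$. By \eqref{eq:nuclear-idealisers}, each of these is $\{m_a:a\in K\}$, which has order $q$.

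For the centre, Proposition~\ref{prop:spread-nuclei} gives $Z(\Ss)\cong\mathscr Z_\omega(\mathscr C)$, and $\mathscr Z_\omega(\mathscr C)\subseteq I_l(\mathscr C)$, so $|Z(\Ss)|\le q$. For the reverse inequality, note that the multiplication is $K$-bilinear. By Lemma~\ref{lem:kaplansky}, the Kaplansky isotope is a $K$-bilinear semifield, and $K$ embeds in its centre via $\lambda\mapsto\lambda1$. Hence its centre has order at least $q$. Since the order of the centre is an isotopy invariant, $|Z(\Ss)|=q$ for every $\Ss$ isotopic to $\Ps_{w,s}$. One could also check directly that each $m_a$ with $a\in K$ satisfies the defining relation of $\mathscr Z_\omega$, because scalars commute with every $K$-linear map.

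For the Knuth orbit, Corollary~\ref{cor:knuth} shows that every member is isotopic to $\Ps_{w,s}$ or to $\Ps_{w^{-1},s}$. Here $w^{-1}=1-w$ also satisfies \eqref{eq:parameter}, so $\Ps_{w^{-1},s}$ is a member of the family, and the preceding argument applies to it verbatim. The only point needing care is this last reduction, namely that Corollary~\ref{cor:knuth} concerns isotopy in the sense of arbitrary additive maps. That is exactly what its proof asserts through the absolute trace, so no real obstacle is expected. The rest is bookkeeping of the identifications in Proposition~\ref{prop:spread-nuclei}.
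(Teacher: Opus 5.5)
Your proposal is correct and follows essentially the same route as the paper: the nuclear orders come from Theorem~\ref{thm:direct-nuclei} via Proposition~\ref{prop:spread-nuclei}, the centre is bounded above by $I_l(\mathscr C)$, and Corollary~\ref{cor:knuth} handles the Knuth orbit. The only small difference is in the lower bound for the centre: the paper checks directly that the $K$-scalar maps lie in $\mathscr Z_\omega(\mathscr C)$, whereas your main argument uses the embedding $\lambda\mapsto\lambda1$ of $K$ into the centre of the $K$-bilinear Kaplansky isotope (and you also mention the direct check).
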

\begin{proof}
The nuclear orders follow from Theorem~\ref{thm:direct-nuclei} and Proposition~\ref{prop:spread-nuclei}. Every element of $\mathscr C$ is $K$-linear, so its $K$-scalar maps satisfy the defining identities of $\mathscr Z_\omega(\mathscr C)$ for any $\omega\ne0$. Since $I_l(\mathscr C)$ consists of exactly these maps, the centre also has order $q$. Apply Corollary~\ref{cor:knuth} for the orbit assertion.
\end{proof}

\subsection{Determinant-vertex ranks}\label{subsec:new-invariants}

In the $\sigma$-ordered split coordinates, the determinant vertices are the coordinate points. The ordinary representative of $e_i$ is $M_\sigma^{-1}e_i$, where $M_\sigma=(b_j^{\sigma^i})_{i,j}$ for the fixed ordinary basis. Every entry of $L_w(e_0)$ outside the rows and columns $-2,-1,1,2$ is zero. On these rows and columns, in this order, the matrix is
\begin{equation}\label{eq:vertex-block}
 \begin{pmatrix}
 0&w&0&0\\
 1&0&1&0\\
 0&w&0&w\\
 0&0&1&0
 \end{pmatrix}.
\end{equation}
Its determinant is $w^2$, which is nonzero. Hence $L_w(e_0)$ has rank four. The same is true at every split coordinate point by cyclic translation and for every determinant type by Lemma~\ref{lem:cyclic}. By \eqref{eq:vertex-coordinate-ranks}, the corresponding ordinary vertices have the same matrix ranks. Theorem~\ref{thm:division} and Lemma~\ref{lem:cyclic} factor all three split determinants into $n$ independent coordinate forms. Transferring the factors to the ordinary coordinates and applying Proposition~\ref{prop:splittingfield} gives the component field $F=\F_{q^n}$ for every type. Theorem~\ref{thm:direct-nuclei} and Corollary~\ref{cor:nuclei} show that $K$ is the full centre of an isotopic semifield. Consequently,
\begin{equation}\label{eq:new-central-profile}
 \mathcal I_\mu(\Ps_{w,s})=(n,1,4),
 \qquad \mu\in\{L,R,\mathrm{tr}\}.
\end{equation}
Profiles over smaller central subfields add no information, by Proposition~\ref{prop:profile-restriction}.

\begin{theorem}\label{thm:nonisotopy}
No $\Ps_{w,s}$ satisfying \eqref{eq:admissible} is isotopic to a finite field or to an Albert's generalized twisted field.
\end{theorem}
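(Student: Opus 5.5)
The plan is to compare central determinant profiles, which are isotopy invariants by Theorem~\ref{thm:invariance}. By \eqref{eq:new-central-profile}, every $\Ps_{w,s}$ has $\mathcal I_\mu(\Ps_{w,s})=(n,1,4)$ for each $\mu\in\{L,R,\mathrm{tr}\}$. It therefore suffices to show that no finite field and no Albert's generalized twisted field has a central profile equal to $(n,1,4)$ for even one determinant type, say $\mu=L$. A preliminary reduction uses Corollary~\ref{cor:nuclei}: any presemifield isotopic to $\Ps_{w,s}$ has order $q^n$, centre of order $q$, and all nuclei of order $q$. Any potential isotopic field or twisted field must therefore be considered with centre $K_0\cong\F_q$ and dimension $n$ over it.

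\textbf{Finite fields.} A finite field of order $q^n$ has centre itself, so its central profile is $(1,1,1)$. This already differs from $(n,1,4)$ because $n\ge5$. Alternatively, nuclei of order $q^n$ contradict Corollary~\ref{cor:nuclei}. Either argument rules out fields.

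\textbf{Albert's generalized twisted fields.} Let $\Qs$ be given by \eqref{eq:gtf} over some $\F_h$. If $s=0$, $t=0$ or $s=t$ modulo the dimension, then $\Qs$ is isotopic to a field, which is already excluded. Otherwise Albert's theorem gives centre order $h^\kappa$, and we pass to central parameters $(h^\kappa,n/\kappa,s/\kappa,t/\kappa)$, as explained before Proposition~\ref{prop:gtf}. The central profile is then the relative profile of Proposition~\ref{prop:gtf} computed with $\kappa=1$. One point needs checking: after dividing by $\kappa$, the conditions $s,t,s-t\not\equiv0$ still hold and the presemifield is still of the form \eqref{eq:gtf}. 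This should follow because the multiplication is $\F_{h^\kappa}$-bilinear and the exponents $h^s,h^t$ are powers of $h^\kappa$. By Proposition~\ref{prop:gtf}, the $L$-profile is either $(n',1,2)$ or $(d,\ell,\bot)$ with $\ell\ge2$. Neither equals $(n,1,4)$: the rank entry is $2$ or $\bot$, never $4$. Theorem~\ref{thm:invariance} then forbids isotopy. The same conclusion holds for the $R$ and $\mathrm{tr}$ types.

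The main point to verify is that the central profile is legitimately computed from the displayed Albert multiplication. This requires $\F_{h^\kappa}$ to be exactly the centre, not merely contained in it. Albert's centre computation \cite[Theorem~1]{Albert} provides this. If one prefers to avoid it, Proposition~\ref{prop:profile-restriction} gives a fallback: relative profiles over any central subfield keep the same degree entry and the same $\rho$. Hence $\rho=2$ or $\bot$ persists regardless of which central subfield is used, while for $\Ps_{w,s}$ the value $\rho=4$ persists likewise. Comparing the third entries over a common subfield, for example the prime field, then suffices.
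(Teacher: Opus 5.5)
Your proposal is correct and follows the paper's argument: fields are excluded by the centre order, and proper Albert's twisted fields are excluded because Proposition~\ref{prop:gtf} gives determinant-vertex rank $2$ or $\bot$ against the value $4$ in \eqref{eq:new-central-profile}, with Theorem~\ref{thm:invariance} transferring this across isotopy. The paper uses Corollary~\ref{cor:nuclei} and Albert's theorem to force the twisted field to be $\F_q$-bilinear, so both sides are compared over the same base field $K=\F_q$; your fallback of comparing relative profiles over $\F_p$ via Proposition~\ref{prop:profile-restriction} is also valid and avoids needing Albert's centre computation.
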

\begin{proof}
A field of order $q^n$ has centre of order $q^n$, so Corollary~\ref{cor:nuclei} already excludes field isotopy. Suppose that $\Ps_{w,s}$ is isotopic to a proper Albert's generalized twisted field. Write its standard presemifield representative on $F=\F_{p^{en}}$ as
\[
 x\circ y=xy-cx^\tau y^\nu,
\]
with nontrivial distinct automorphisms $\tau,\nu$ of $F$. By Albert's nuclear calculation \cite[Theorem~1]{Albert}, its centre has the order of $\operatorname{Fix}(\tau)\cap\operatorname{Fix}(\nu)$. Corollary~\ref{cor:nuclei} forces this common fixed field to be $K=\F_q$. Both automorphisms therefore fix $K$, and the displayed multiplication is $K$-bilinear. In particular, it has the form in Proposition~\ref{prop:gtf} with $h=q$.

By Proposition \ref{prop:gtf}, its left determinant either has a nonlinear absolute component or has determinant-vertex rank two. Neither agrees with \eqref{eq:new-central-profile}. The invariance theorem, including its common field-automorphism twist, therefore excludes isotopy.
\end{proof}

The standing assumption $q\equiv1\pmod3$ also excludes the
Coulter--Matthews/Ding--Yuan semifields and their Knuth conjugates:
these have characteristic three, whereas $q=p^e\equiv1\pmod3$ implies
$p\ne3$; see \cite[Section~6]{LP}. 

In particular, for every prime $p\equiv1\pmod3$ and
every prime $n\ge5$, the construction gives semifields
of order $p^n$ with centre $\F_p$ that are not isotopic
to finite fields or generalized twisted fields.
Thus the construction provides an infinite collection
of prime-dimensional examples outside these two classes.

\subsection{Isotopisms within the family}\label{subsec:parameters}
We determine all isotopisms between members of the family as both the coefficient parameter $w$ and the generator parameter $s$ vary. We then count the isotopy classes, determine the autotopism groups, and prove that no member has a commutative isotope.

\begin{theorem}\label{thm:parameters}
Fix $q=p^e$ and $n$ satisfying \eqref{eq:admissible}. Let $w,v$ satisfy \eqref{eq:parameter}, and let $s,t$ be units modulo $n$. Every isotopism
\[
 (Ax)*_{v,t}(By)=C(x*_{w,s}y)
\]
is exactly one of the following, where $a,b\in K^\times$ and $0\le h<en$:
\begin{equation}\label{eq:all-isotopies}
\begin{aligned}
 t=s,\quad v=w^{p^h}:\qquad
 & A(x)=a x^{p^h},\quad B(x)=b x^{p^h},\quad C(x)=ab x^{p^h};\\
 t=-s,\quad v=w^{-p^h}:\qquad
 & A(x)=a x^{p^h},\quad B(x)=b x^{p^h},\quad C(x)=vab x^{p^h}.
\end{aligned}
\end{equation}
It is $K$-linear if and only if $e\mid h$.
\end{theorem}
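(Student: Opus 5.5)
The proof goes in three steps. First I reduce to $K$-linear isotopisms, then force the split matrices to be monomial, and finally match coefficients.

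\textbf{Reduction to $K$-linear maps.} By Lemma~\ref{lem:semilinearity} and its dual, $A,B,C$ are $K$-semilinear with a common companion automorphism $x\mapsto x^{p^h}$ restricted to $K$. Precompose all three maps with the Frobenius power $\phi(x)=x^{p^{h}}$. Applying $\phi$ to the coefficients of \eqref{eq:construction} fixes the $\sigma$-structure, because Frobenius commutes with $\sigma$, and replaces $w$ by $w^{p^h}$. So $\Ps_{w,s}^{[\theta]}=\Ps_{w^{p^h},s}$ via $\phi$, and by \eqref{eq:semilinear-reduction} it suffices to classify $K$-linear isotopisms $\Ps_{w',s}\to\Ps_{v,t}$ and then compose back. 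Doing so produces the factor $x^{p^h}$ in all three maps. This composition also explains the last sentence of the theorem: the isotopism is $K$-linear iff $\phi$ is, i.e.\ iff $e\mid h$.

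\textbf{Monomial split matrices.} For a $K$-linear isotopism, all three split determinants of both presemifields are scalar multiples of coordinate products, by Theorem~\ref{thm:division} and Lemma~\ref{lem:cyclic}. Lemma~\ref{lem:monomial} then makes $A_{\mathrm{sp}},B_{\mathrm{sp}},C_{\mathrm{sp}}$ monomial. Work in the $\sigma$-ordering for $\Ps_{w',s}$. A $K$-linear map $A(x)=\sum a_ix^{\sigma^i}$ has Dickson matrix $(a_{j-i}^{\sigma^i})$, which is monomial only if $A$ is a single monomial $a x^{\sigma^k}$.

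Write $A=ax^{\sigma^{\alpha}}$, $B=bx^{\sigma^{\beta}}$, $C=cx^{\sigma^{\gamma}}$ with $a,b,c\in F^\times$. Also write $\sigma_t=\sigma^{u}$ for the generator of $\Ps_{v,t}$, where $u\equiv t/s \pmod n$. Substituting into the identity gives
\[
 \sum_{(i,j)\in S_v^{(u)}}\text{coef}\cdot a^{\sigma^{ui}}b^{\sigma^{uj}}x^{\sigma^{ui+\alpha}}y^{\sigma^{uj+\beta}}
 = c\,\bigl(x*_{w',s}y\bigr)^{\sigma^\gamma}.
\]
Now compare supports of the bilinearized polynomials. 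The right-hand side has exponent pairs $\gamma+\{(1,2),(1,-1),(-2,-1),(2,1),(-1,1),(-1,-2)\}$. The left-hand side has $(\alpha,\beta)+u\cdot$ the same set.

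\textbf{Matching the support sets (main obstacle).} The key step is to show that the six-point set $P=\{(1,2),(1,-1),(-2,-1),(2,1),(-1,1),(-1,-2)\}\subset(\mathbb Z/n)^2$ satisfies $uP+(\alpha,\beta)=P+(\gamma,\gamma)$ only when $u=\pm1$, $\alpha=\beta=\gamma$. I would do this by comparing first-coordinate multisets $\{1,1,-2,2,-1,-1\}$, whose sum is $0$. Sums force $6\alpha=6\gamma$, hence $\alpha=\gamma$ because $\gcd(n,6)=1$, and likewise $\beta=\gamma$. Then $uP=P$. The differences $j-i$ over $P$ are $\{1,-2,1,-1,2,-1\}$, so $u\{\pm1,\pm2\}=\{\pm1,\pm2\}$, giving $u\in\{\pm1,\pm2,\pm1/2\}$. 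A direct check excludes $u=\pm2$ (it needs $\pm4\in\{\pm1,\pm2\}$, impossible for $n\ge5$ coprime to $6$) and, symmetrically, $\pm1/2$.

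After this reduction everything should fall out. The first-line triple of $P$ and the $w$-line triple are swapped by $u=-1$. So for $u=1$ coefficient comparison gives $v=w'$, together with
\[
 a^{\sigma}b^{\sigma^2}=c,\qquad a^\sigma b^{\sigma^{-1}}=c,\qquad \dots
\]
These force $b^{\sigma^3}=b$, hence $b\in K$, and similarly $a\in K$, $c=ab$. We may also absorb $\sigma^\gamma$, since the maps $x\mapsto x^{\sigma^\gamma}$ are autotopisms of the form $x\mapsto x^{p^{h'}}$ with $e\mid h'$. For $u=-1$, \eqref{eq:opposite} yields $v=w'^{-1}$ and $c=vab$. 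Finally, the two cases are exclusive, because $s\ne -s$ for odd $n$.
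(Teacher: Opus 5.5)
Your route is the paper's: the Galois-twist reduction to $K$-linear maps, Lemma~\ref{lem:monomial} to get linearized monomials, support matching, then coefficient comparison. Your observation that the first- and second-coordinate sums of $P$ vanish is correct and tidy. It gives $6(\alpha-\gamma)\equiv6(\beta-\gamma)\equiv0\pmod n$, hence $\alpha=\beta=\gamma$.

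The gap is your exclusion of $u=\pm2$. You assert that $\pm4\in\{\pm1,\pm2\}$ is impossible for every $n\ge5$ coprime to $6$, but for $n=5$ one has $4\equiv-1\pmod5$. In fact $\{\pm1,\pm2\}=(\mathbb Z/5\mathbb Z)^\times$ is stable under multiplication by every unit. So for $n=5$ the difference set $\{j-i\}$ gives no information at all, and your argument leaves $u\in\{\pm1,\pm2\}$ undecided. That is exactly the five-dimensional case on which the Kaplansky counterexamples rest. For $n\ge7$ your check is valid, since $4\equiv1,-1,2,-2$ would force $n\mid3$, $n\mid5$, $n\mid2$ or $n\mid6$ respectively.

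The repair is the multiplicity argument the paper uses. The multiset of first coordinates of $P$ is $\{1,1,-2,2,-1,-1\}$. In it, $\pm1$ occur twice and $\pm2$ once, and these four residues are distinct for odd $n\ge5$. In $uP$ the values occurring twice are $\pm u$. Hence $\{u,-u\}=\{1,-1\}$, so $u=\pm1$ for every admissible $n$. Concretely for $n=5$, the first coordinate $2$ occurs twice in $2P$ but only once in $P$.

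With this replacement the rest of your proof goes through. Two points of presentation remain:
\begin{enumerate}
\item Read off $v=w'$ from a $w$-term only after you have shown $a,b\in K$.
\item Combine the factor $x^{\sigma^\gamma}$ with $\phi$ into a single exponent $p^h$, $0\le h<en$, rather than \emph{absorbing} it away. These $K$-linear Frobenius powers are part of the list in \eqref{eq:all-isotopies}.
\end{enumerate}
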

\begin{proof}
First suppose the isotopism is $K$-linear. By Theorem~\ref{thm:division} and Lemma~\ref{lem:cyclic}, all three split determinant polynomials are nonzero scalar multiples of their coordinate products; the corresponding ordinary forms are scalar multiples of norm forms by \eqref{eq:ordinary-split-determinants} and \eqref{eq:polynomial-norm}. Lemma~\ref{lem:monomial} forces the input matrices and the inverse transpose of the output matrix to be monomial in their split coordinates. By \eqref{eq:dickson}, a $K$-linear map with a monomial Dickson matrix is a single linearized monomial. Hence
\[
 A(x)=a x^{q^i},\quad B(x)=b x^{q^j},\quad C(x)=c x^{q^k},
 \qquad a,b,c\in F^\times.
\]
In standard $q$-exponents, put
\[
 S_s=\{(s,2s),(s,-s),(-2s,-s),(2s,s),(-s,s),(-s,-2s)\}.
\]
Equality of supports gives
\[
 S_t+(i-k,j-k)=S_s.
\]
In the first-coordinate multiset of $S_s$, exactly $s,-s$ occur twice; $2s,-2s$ occur once. Thus, putting $u=i-k$, we get
\[
 \{u+t,u-t\}=\{s,-s\}.
\]
Adding gives $2u=0$, so $u=0$ since $n$ is odd, and $t=\pm s$. The same argument in the second coordinate gives $j=k$. Hence $i=j=k$ modulo $n$.

If $t=s$, comparison of the three coefficient-one terms gives
\[
 a^\sigma b^{\sigma^2}=a^\sigma b^{\sigma^{-1}}
 =a^{\sigma^{-2}}b^{\sigma^{-1}}=c.
\]
Therefore $a^{\sigma^3}=a$ and $b^{\sigma^3}=b$. Since $\gcd(n,3)=1$, both $a,b$ belong to $K$, and $c=ab$. Any coefficient-$w$ term now gives $v=w$. The converse is immediate by substitution. When $t=-s$, use $f_{v,-s}=v f_{v^{-1},s}$ to reduce to the first case; this gives $v=w^{-1}$ and $c=vab$.

For a general isotopism, apply Lemma~\ref{lem:semilinearity} to the right and left spread sets. If the companion automorphism on $K$ is the $p^j$-Frobenius, with $0\le j<e$, take $\phi_j(x)=x^{p^j}$ in \eqref{eq:galois-twist}. Since all Frobenius powers commute,
\begin{equation}\label{eq:familytwist}
 \Ps_{w,s}^{[j]}=\Ps_{w^{p^j},s}.
\end{equation}
The preceding $K$-linear calculation applies to this twisted source. Combining its $q^k$-power with $p^j$ yields $h=j+ek$ and precisely \eqref{eq:all-isotopies}. Conversely all the displayed triples satisfy the isotopy identity. A monomial $x\mapsto a x^{p^h}$ is $K$-linear precisely when $e\mid h$.
\end{proof}

The number of choices for the generator parameter $s$ is $\varphi(n)=|(\mathbb Z/n\mathbb Z)^\times|$. Thus the totient in the class count is evaluated at the extension degree $n$, not at the base-field size $q$.

\begin{corollary}\label{cor:parametercount}
Fix $q=p^e$ and $n$ satisfying \eqref{eq:admissible}, and put $K=\F_q$. The family $\{\Ps_{w,s}\}$ represents exactly $\varphi(n)$ $K$-linear isotopy classes. The number of isotopy classes represented by this family is
\begin{equation}\label{eq:classcount}
 \begin{cases}
 \varphi(n),&p\equiv1\pmod3,\\
 \varphi(n)/2,&p\equiv2\pmod3.
 \end{cases}
\end{equation}
For $p\equiv1\pmod3$, fix either root $w$ and let $s$ range over all units modulo $n$ to obtain representatives. For $p\equiv2\pmod3$, fix $w$ and take one $s$ from each pair $\{s,-s\}$.
\end{corollary}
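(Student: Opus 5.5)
The count is a bookkeeping consequence of Theorem~\ref{thm:parameters}. The family is indexed by pairs $(w,s)$, where $w$ is one of the two roots of $w^2-w+1=0$ in $K$ (these are $w$ and $w^{-1}=1-w$, distinct by \eqref{eq:w-relations}) and $s\in(\mathbb Z/n\mathbb Z)^\times$. That gives $2\varphi(n)$ labels. I will describe the orbits of the equivalence relation that Theorem~\ref{thm:parameters} induces on these labels, first for $K$-linear isotopy and then for general isotopy.

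For $K$-linear isotopy, Theorem~\ref{thm:parameters} says an isotopism $\Ps_{w,s}\to\Ps_{v,t}$ exists with $e\mid h$ exactly when either $t=s$ and $v=w^{p^h}$, or $t=-s$ and $v=w^{-p^h}$. When $e\mid h$, $p^h$ is a power of $q$ and $w\in K$, so $w^{p^h}=w$. Hence $(w,s)\sim_K(v,t)$ if and only if $(v,t)=(w,s)$ or $(v,t)=(w^{-1},-s)$. Since $n$ is odd, $s\ne-s$, and $w\ne w^{-1}$, so every class has exactly two labels. The number of $K$-linear classes is therefore $2\varphi(n)/2=\varphi(n)$. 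A set of representatives is $\{(w,s):s\in(\mathbb Z/n\mathbb Z)^\times\}$ for a fixed $w$, because each class $\{(w,s),(w^{-1},-s)\}$ contains exactly one label with first coordinate $w$.

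For general isotopy, $h$ ranges over $0\le h<en$, and the map $w\mapsto w^{p^h}$ acts on $\{w,w^{-1}\}$ as a power of the $p$-Frobenius. The key step is to determine when the $p$-Frobenius swaps the two roots. The roots are $-\omega$ and $-\omega^2$ with $\omega$ a primitive cube root of unity, and $w^p=w$ holds iff $\omega^p=\omega$, that is, iff $p\equiv1\pmod3$. If $p\equiv2\pmod3$ (which forces $e$ even), then $w^p=w^{-1}$.

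In the case $p\equiv1\pmod3$, every $w^{p^h}$ equals $w$, so general isotopy produces the same relation as $K$-linear isotopy and there are $\varphi(n)$ classes, with the same representatives. In the case $p\equiv2\pmod3$, taking $h=1$ in the two branches of Theorem~\ref{thm:parameters} gives $(w,s)\sim(w^{-1},s)$ and $(w,s)\sim(w,-s)$. Each class is then the full set $\{w,w^{-1}\}\times\{s,-s\}$, which has four elements because $s\ne-s$. This gives $2\varphi(n)/4=\varphi(n)/2$ classes, and fixing $w$ and choosing one $s$ from each pair $\{s,-s\}$ gives representatives.

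No step is a real obstacle. The only points needing care are confirming $s\ne -s$ and $w\ne w^{-1}$, so that the orbit sizes are exactly $2$ and $4$, and the Frobenius computation on the cube roots of unity.
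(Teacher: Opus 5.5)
Your proposal is correct and follows the paper's route. You read off from Theorem~\ref{thm:parameters} that the only $K$-linear identification among the $2\varphi(n)$ labels is $(w,s)\sim(w^{-1},-s)$, and that for general isotopy the $p$-Frobenius also identifies $(w,s)$ with $(w^{-1},s)$ exactly when $p\equiv2\pmod3$; you then count orbits of sizes $2$ and $4$. Your explicit check via $w=-\omega$ that $w^p=w$ iff $p\equiv1\pmod3$, and $w^p=w^{-1}$ otherwise, fills in a step the paper only asserts.
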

\begin{proof}
Under $K$-linear isotopy the only identification among the $2\varphi(n)$ parameter pairs is $(w,s)\sim(w^{-1},-s)$. The two roots are distinct, and $s\ne-s$ modulo the odd number $n$.

If $p\equiv1\pmod3$, both roots are in $\F_p$, so all $p$-Frobenius powers fix them. If $p\equiv2\pmod3$, they are exchanged by $p$-Frobenius and $e$ is even. In this case $(w,s)$ and $(w^{-1},s)$ are also isotopic. Theorem~\ref{thm:parameters} shows that these are all identifications.
\end{proof}

The case $n=5$, $s=1$ contains the two-parameter-value subfamily from which the construction arose. Allowing all generators adds genuinely different classes: for each admissible $q=p^e$, the full family of order $q^5$ has four isotopy classes when $p\equiv1\pmod3$ and two when $p\equiv2\pmod3$. More generally, for prime $n=r\ge5$ the two counts are $r-1$ and $(r-1)/2$, respectively; see Corollary~\ref{cor:menichetti}.

\begin{corollary}\label{cor:autotopisms}
Let $\delta=[\F_p(w):\F_p]\in\{1,2\}$. The autotopisms of $\Ps_{w,s}$ are
\[
 (a x^{p^h},b x^{p^h},ab x^{p^h}),
 \qquad a,b\in K^\times,\quad \delta\mid h,
\]
with exponents modulo $en$. In particular,
\begin{equation}\label{eq:autorder}
 |\Aut(\Ps_{w,s})|=(q-1)^2en/\delta,
 \qquad
 \Aut(\Ps_{w,s})\cong(K^\times\times K^\times)\rtimes C_{en/\delta},
\end{equation}
where the cyclic generator acts on each scalar by $p^\delta$-Frobenius.
\end{corollary}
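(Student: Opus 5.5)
We specialize Theorem~\ref{thm:parameters} to the case $v=w$ and $t=s$. An autotopism is an isotopism from $\Ps_{w,s}$ to itself, so it must be one of the triples in \eqref{eq:all-isotopies} with target parameters $(w,s)$. The second line of \eqref{eq:all-isotopies} requires $t=-s$. Since $n$ is odd and $s$ is a unit, $s\ne-s$ modulo $n$, so the second line cannot occur. The first line applies exactly when $w^{p^h}=w$. This condition means that the $p^h$-Frobenius fixes $\F_p(w)$, that is, $\delta\mid h$. Here $\delta=1$ when $p\equiv1\pmod3$, since then $w\in\F_p$, and $\delta=2$ otherwise, by the discussion in Corollary~\ref{cor:parametercount}. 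Hence the autotopisms are exactly the triples $(ax^{p^h},bx^{p^h},abx^{p^h})$ with $a,b\in K^\times$ and $\delta\mid h$, exponents being read modulo $en$ because $x^{p^{en}}=x$ on $F$.

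Next we count and check that the parametrization is injective. A monomial map $x\mapsto ax^{p^h}$ with $0\le h<en$ determines both $a$ and $h$, by uniqueness of the $p$-polynomial representation. Distinct data $(a,b,h)$ therefore give distinct triples. The admissible $h$ form the subgroup $\delta\mathbb Z/en\mathbb Z$, which has order $en/\delta$; note that $\delta\mid e$ when $\delta=2$, so $\delta\mid en$. This gives $|\Aut(\Ps_{w,s})|=(q-1)^2en/\delta$.

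For the structure, compose triples componentwise. For the first component,
\[
(a x^{p^h})\circ(a' x^{p^{h'}})=a\,a'^{p^h}x^{p^{h+h'}},
\]
and the same holds for the second and third components. The pairs with $h=0$ form a normal subgroup isomorphic to $K^\times\times K^\times$. The elements $(x^{p^{\delta k}},x^{p^{\delta k}},x^{p^{\delta k}})$ form a cyclic complement $C_{en/\delta}$. Conjugating $(a,b)$ by the generator sends it to $(a^{p^\delta},b^{p^\delta})$, which gives the stated semidirect product with the $p^\delta$-Frobenius action on each scalar.

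The only point needing care is the bookkeeping in the first step. The exponent $h$ in Theorem~\ref{thm:parameters} ranges over $0\le h<en$. The condition on the target coefficient $v=w^{p^h}=w$ must be converted into $\delta\mid h$ correctly. One must also confirm that the $K$-scalars $a,b$ are unaffected by the twist, which holds because $K^\times$ is stable under Frobenius.
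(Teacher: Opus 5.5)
Your proposal is correct and follows the paper's own proof. You specialize Theorem~\ref{thm:parameters} to $(v,t)=(w,s)$, exclude the $t=-s$ line because $n$ is odd, rewrite $w^{p^h}=w$ as $\delta\mid h$, and read off the semidirect product from composition of the monomial triples; you also make explicit the injectivity of the parametrization, the check that $\delta\mid en$, and the cyclic complement, all of which the paper leaves implicit.
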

\begin{proof}
In a self-isotopy $t=s$, since $s\ne-s$. The condition on $h$ is $w^{p^h}=w$, equivalently $\delta\mid h$. Composition of the displayed maps gives the stated semidirect product.
\end{proof}

For any fixed candidate $\Qs$ with $K$-bilinear multiplication, \eqref{eq:semilinear-reduction} and \eqref{eq:familytwist} show that isotopy to $\Ps_{w,s}$ reduces to $K$-linear isotopy with one or two parameter values. For $p\equiv1\pmod3$ only $w$ occurs, so isotopy and $K$-linear isotopy are equivalent for that fixed member. For $p\equiv2\pmod3$ both $w,w^{-1}$ must be considered.

\begin{corollary}\label{cor:no-commutative}
No member of the Knuth orbit of $\Ps_{w,s}$ has a commutative isotope.
\end{corollary}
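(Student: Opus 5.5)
By Corollary~\ref{cor:knuth}, every member of the Knuth orbit is isotopic to $\Ps_{w,s}$ or to $\Ps_{w^{-1},s}$. Since $w^{-1}$ also satisfies \eqref{eq:parameter}, it suffices to show that no $\Ps_{w,s}$ is isotopic to a commutative semifield. I will argue by contradiction, using Lemma~\ref{lem:ganley}. Suppose there is an additive bijection $H$ of $F$ with
\[
 x*_{w,s}(Hy)=y*_{w,s}(Hx)\qquad(x,y\in F).
\]
The first identity in \eqref{eq:opposite} rewrites the right-hand side as $w\,(Hx)*_{w^{-1},s}y$. Hence the triple $(H,\id,\ m_w^{-1}\circ\tau)$ is an isotopism from $\Ps_{w,s}$ to $\Ps_{w^{-1},s}$, where $\tau$ denotes the swap. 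More concretely, put $A=H$, $B=H^{-1}$ and $C=m_{w^{-1}}$ after the substitution $y\mapsto H^{-1}y$:
\[
 (Hx)*_{w^{-1},s}(H^{-1}y')= w^{-1}\bigl(x*_{w,s}y'\bigr).
\]
I will check the placement of the maps carefully against Definition~\ref{def:isotopy}. The point is that a Ganley map produces an isotopism between $\Ps_{w,s}$ and $\Ps_{w^{-1},s}$ with the \emph{same} generator $s$, and with first and second components inverse to each other.

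Next I apply Theorem~\ref{thm:parameters}. With $t=s$ and $v=w^{-1}$, the isotopism must lie in the first line of \eqref{eq:all-isotopies}, since $t=-s$ is impossible for odd $n$. This forces $w^{-1}=w^{p^h}$ and gives the form
\[
 A(x)=ax^{p^h},\qquad B(x)=bx^{p^h}.
\]
Now $B=A^{-1}$ gives $p^{2h}\equiv 1$, that is, $2h\equiv0\pmod{en}$. The inverse of $x\mapsto ax^{p^h}$ is $x\mapsto (x/a)^{p^{-h}}$, and this must equal $bx^{p^h}$.

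I then split into cases on $p$. If $p\equiv1\pmod3$, then $w\in\F_p$, so $w^{p^h}=w\neq w^{-1}$ by \eqref{eq:w-relations}; this is already a contradiction. If $p\equiv2\pmod3$, then $w^{p^h}=w^{-1}$ forces $h$ to be odd. Combined with $2h\equiv 0\pmod{en}$, we get $h\equiv0$ or $h\equiv en/2$. But $n$ is odd and $e$ is even, so $en/2=(e/2)n$, and its parity is that of $e/2$. That is not automatically odd, so this case does not close by counting exponents alone.

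The main obstacle is therefore the $p\equiv2\pmod3$ case with $h=en/2$. Rather than relying only on $B=A^{-1}$, I will use the Ganley identity more sharply. Substituting the forced forms $H(x)=ax^{p^h}$ into $x*(Hy)=y*(Hx)$ and comparing supports directly in the split coordinates of Lemma~\ref{lem:monomial} is more rigid. With $h=en/2$, the map $x\mapsto x^{p^h}$ is $q^{n/2}$-power-like, and it shifts $\sigma$-indices by a non-integer amount modulo $n$ relative to $K$. Because $e\mid h$ fails unless $e/2$ is even, the $K$-linear reduction will force $e\mid h$ via Lemma~\ref{lem:semilinearity}: $H$ would have to be $K$-semilinear with the same companion automorphism as the identity, hence $K$-linear. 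So $e\mid h$. Together with $h$ odd, this contradicts $e$ being even. This last step, which extracts $K$-linearity of $H$ from the common companion automorphism of the isotopism $(H,H^{-1},C)$, is where I expect the real work to lie.
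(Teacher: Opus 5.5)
Your argument is correct and, once the detour is removed, it is the paper's proof: the Ganley map gives the isotopism $(H,H^{-1},m_{w^{-1}})$ from $\Ps_{w,s}$ to $\Ps_{w^{-1},s}$, and because its third component $m_{w^{-1}}$ is $K$-linear, Lemma~\ref{lem:semilinearity} makes $H$ $K$-linear at the outset, after which the $K$-linear case of Theorem~\ref{thm:parameters} forces $w^{-1}=w$ for every $p$. The exponent count from $B=A^{-1}$ and the case split on $p$ are therefore unnecessary (even just comparing $C(x)=abx^{p^h}$ with $w^{-1}x$ forces $h=0$ at once), and two points need correcting: your side remark about $e/2$ is wrong, since $e\nmid en/2$ for odd $n$; and the map whose trivial companion automorphism you should invoke is $m_{w^{-1}}$, not the ``identity'' from your first triple $(H,\id,\dots)$, which is not an isotopism.
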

\begin{proof}
By Ganley's criterion, a commutative isotope would give an additive bijection $H$ with $x*_{w,s}(Hy)=y*_{w,s}(Hx)$. Replacing $y$ by $H^{-1}y$ and using \eqref{eq:opposite} yields
\[
 (Hx)*_{w^{-1},s}(H^{-1}y)=w^{-1}(x*_{w,s}y).
\]
Its output map is $K$-linear. Common semilinearity therefore makes $H$ $K$-linear too. The $K$-linear part of Theorem~\ref{thm:parameters}, with the same $s$, would require $w^{-1}=w$, a contradiction. Corollary~\ref{cor:knuth} proves the statement throughout the orbit.
\end{proof}

\subsection{Kaplansky's conjecture and the prime-dimensional classification}\label{subsec:menichetti}

Kaplansky's five-dimensional conjecture \cite{Kaplansky75}, as reproduced in \cite[p.~69, (K2)]{Menichetti96}, asserts that sufficiently large base fields force a field or a twisted field. Menichetti's assertion in prime dimension would answer it affirmatively. We first record a concrete failure of the rank implication on page~91 of \cite{Menichetti96}. This failure already occurs within a generalized twisted isotopy class, so it does not depend on the present construction. Work over $F=\F_{125}$ with $K=\F_5$ and $\sigma(x)=x^5$. Let $\Qs=(F,+,\circ)$ and $\Ps_0=(F,+,\star)$ be defined by
\[
 x\circ y=xy-2x^\sigma y^{\sigma^2},\qquad
 x\star y=(\id+\sigma)(x\circ y).
\]
Their split output forms are
\[
 q_r=X_rY_r-2X_{r+1}Y_{r+2},\qquad p_r=q_r+q_{r+1}
 \quad(r\bmod3).
\]
The presemifield $\Qs$ is a generalized twisted representative, since $\N_{\F_{125}/\F_5}(2)=2^3\ne1$. The output change $\id+\sigma$ is invertible, with inverse $(\id-\sigma+\sigma^2)/2$. The split determinants are $\widetilde D_{L,\Ps_0}(\Bf X)=-14\prod_iX_i$ and $\widetilde D_{R,\Ps_0}(\Bf Y)=-14\prod_iY_i$. Yet
\[
 p_0=X_1(Y_1-2Y_2)+Y_0(X_0-2X_2).
\]
At the split parameter $(Y_0,Y_1,Y_2)=(0,2,1)$ the first equation vanishes identically, while the coefficient matrix in the $X$-variables is
\[
 \begin{pmatrix}0&0&0\\-4&2&1\\-4&-2&1\end{pmatrix}.
\]
Its bottom-left $2\times2$ minor is $16\ne0$ in $\F_5$, so its rank is two, not at most one. A zero equation does not, by itself, force the asserted extra rank drop. This illustrates the gap without contradicting Menichetti's three-dimensional classification \cite{Menichetti77}.

We now specialize the construction to prime dimension while retaining every admissible prime-power base field. The resulting semifields contradict the classification in every prime dimension at least five; dimension five gives counterexamples to Kaplansky's conjecture.
\begin{corollary}\label{cor:menichetti}
Let $q=p^e\equiv1\pmod3$ be a prime power, and let $r\ge5$ be prime. The family $\Ps_{w,s}$ with $n=r$ represents exactly
\[
 \begin{cases}
 r-1,&p\equiv1\pmod3,\\[2mm]
 (r-1)/2,&p\equiv2\pmod3
 \end{cases}
\]
isotopy classes of semifields of order $q^r$. Every such semifield has centre and all three nuclei of order $q$, and none is isotopic to a finite field or an Albert's generalized twisted field. The five-dimensional examples disprove Kaplansky's conjecture, while the examples in every prime dimension $r\ge5$ contradict Proposition~30 and Corollary~33 of \cite{Menichetti96} as stated.
\end{corollary}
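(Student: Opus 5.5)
This corollary is assembled from results already proved, by specializing to $n=r$. First we check admissibility. A prime $r\ge5$ is coprime to $6$, so $(q,r)$ satisfies \eqref{eq:admissible}, and Theorem~\ref{thm:division} shows that every $\Ps_{w,s}$ is a presemifield of order $q^r$. The units modulo $r$ are $1,\dots,r-1$, so $\varphi(r)=r-1$. Corollary~\ref{cor:parametercount} then gives exactly $r-1$ isotopy classes when $p\equiv1\pmod3$ and $(r-1)/2$ when $p\equiv2\pmod3$. The latter count is an integer because $r$ is odd.

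For the nuclei, Kaplansky's trick (Lemma~\ref{lem:kaplansky}) produces a semifield isotopic to each $\Ps_{w,s}$. Corollary~\ref{cor:nuclei} shows that this semifield, and every semifield in the isotopy class and Knuth orbit, has centre and all three nuclei of order $q$. Theorem~\ref{thm:nonisotopy} excludes isotopy to fields and to generalized twisted fields. For prime $r$ this can also be read off from the profiles. The central profile is $(r,1,4)$ by \eqref{eq:new-central-profile}, whereas every proper Albert field over the full centre has profile $(r,1,2)$ by \eqref{eq:albert-prime}. The field profile is $(1,1,1)$, or equivalently the field's centre has order $q^r$.

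For Kaplansky's conjecture, take $r=5$. Given any bound $B$, we want admissible $q>B$. Choosing $q=p$ for primes $p\equiv1\pmod3$ (Dirichlet), or simply $q=4^e$, gives admissible $q$ that are arbitrarily large. The resulting five-dimensional division algebras over $\F_q$, with centre $\F_q$, are neither fields nor twisted fields, so (K2) fails.

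For Menichetti's Proposition~30 and Corollary~33, it suffices that for each prime $r\ge5$ and each characteristic $p\ne3$ there are arbitrarily large admissible $q=p^e$. If $p\equiv1\pmod3$, every $e$ works. If $p\equiv2\pmod3$, every even $e$ works, and this includes $p=2$. This produces $r$-dimensional division algebras over arbitrarily large $\F_q$ that are neither fields nor generalized twisted fields, contrary to those statements.

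The main point needing care is the bookkeeping of scalar fields. Menichetti's statements concern algebras over $\F_q$ with $q$ large, so one must confirm that the base field $\F_q$ is exactly the centre and that "twisted field" is understood over that centre. This is precisely what Corollary~\ref{cor:nuclei}, together with the centre argument in the proof of Theorem~\ref{thm:nonisotopy}, supplies. No new computation is required.
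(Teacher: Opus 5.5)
Your proposal follows the paper's proof essentially step for step: admissibility of $n=r$, Theorem~\ref{thm:division} with Lemma~\ref{lem:kaplansky}, Corollary~\ref{cor:nuclei}, Theorem~\ref{thm:nonisotopy}, Corollary~\ref{cor:parametercount} with $\varphi(r)=r-1$, and arbitrarily large admissible $q=p^e$ in every characteristic $p\ne3$.

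The one step you skip concerns Proposition~30 of \cite{Menichetti96}. Unlike Corollary~33, that proposition is conditional on the left and right determinants splitting into hyperplanes. So exhibiting an algebra that is neither a field nor a twisted field does not by itself contradict it. You need to add that, by Theorem~\ref{thm:division} and Lemma~\ref{lem:cyclic}, $D_{L,\Ps_{w,s}}$ and $D_{R,\Ps_{w,s}}$ are nonzero scalar multiples of $\N_{F/K}$. They are therefore products of $r$ independent linear forms over $\F_{q^r}$, which is exactly the hypothesis of that proposition. The paper's proof includes a sentence making this point.
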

\begin{proof}
Every prime $r\ge5$ is coprime to $6$. Apply Theorem~\ref{thm:division} and Lemma~\ref{lem:kaplansky} with $n=r$. Corollary~\ref{cor:nuclei} gives the nuclear and central orders, and Theorem~\ref{thm:nonisotopy} excludes fields and generalized twisted fields. Since $\varphi(r)=r-1$, Corollary~\ref{cor:parametercount} gives the stated numbers of isotopy classes.

For fixed $r$ and fixed characteristic $p\ne3$, arbitrarily large admissible base fields are obtained by taking $q=p^e$ with all $e\ge1$ if $p\equiv1\pmod3$, and with all even $e$ if $p\equiv2\pmod3$. The left and right norm determinants are exactly the hyperplane situation required in Proposition~30 of \cite{Menichetti96}. Hence both that proposition and the prime-dimensional classification in Corollary~33 are contradicted, and no increase of the base-field threshold repairs those assertions. Taking $r=5$ gives the stated consequence for Kaplansky's conjecture.
\end{proof}

\section*{Acknowledgements}
Gabor Nagy is supported by the National Research, Development and Innovation Fund of Hungary under grant SNN 152582. Yue Zhou is supported by the National Natural Science Foundation of China (No.\ 12371337).

The main part of this paper was developed during the conference \emph{Finite Geometry, Combinatorics, Boolean Functions and their Links}, held in Magdeburg, Germany, on 15--19 September 2026 and dedicated to Alexander Pott on the occasion of his 65th birthday. The authors thank the organizers for providing a stimulating setting and a welcoming atmosphere for discussions of open problems.

Yue Zhou identified gaps in Menichetti's paper and communicated his concerns to J\"urgen Bierbrauer by email on 8 May 2014. The same gaps were also identified by Guobiao Weng, who shared his findings and principal concerns with Tao Feng, Qing Xiang and Yue Zhou in 2021. Yue Zhou thanks Bierbrauer, Feng, Xiang and Weng for valuable discussions.

\section*{Declaration of generative AI and AI-assisted technologies in the manuscript preparation process}
During the research and preparation of this work, the authors used ChatGPT~6 Pro (OpenAI) under their guidance and through sustained mathematical discussions. Most of the results were developed with its assistance, including the generation of candidate constructions, proposed identities and proof strategies, draft arguments and verification code. The authors reviewed and edited the generated material and take full responsibility for the mathematical claims, references and final content of the paper.  The results have been formalized by Aristotle (Harmonic) and verified by Lean 4; see \cite{LeanRepo}.

\end{document}